\newtheorem{dfn}{Definition}[section]
\newtheorem{thm}{Theorem}[section]
\newtheorem{prop}{Proposition}[section]
\newtheorem{lem}{Lemma}[section]
\newtheorem{rem}{Remark}[section]
\newtheorem{eg}{Example}[section]
\newtheorem{cor}{Corollary}[section]
\newtheorem{assp}{Assumption}[section]
\DeclareMathOperator{\ot}{OT}
\DeclareMathOperator{\dom}{Dom}
\DeclareMathOperator{\id}{id}
\DeclareMathOperator{\fr}{FR}
\DeclareMathOperator{\im}{Im}
\DeclareMathOperator{\diag}{diag}
\title{A Ramsey Algebraic Study of Matrices}
\author{Zu Yao Teoh \& Wen Chean Teh\footnote{Corresponding author.}}
\date{April 27, 2017}
\begin{document}
\maketitle

\abstract{The notion of a topological Ramsey space was introduced by Carlson some 30 years ago. Studying the topological Ramsey space of variable words, Carlson was able to derive many classical combinatorial results in a unifying manner. For the class of spaces generated by algebras, Carlson had suggested that one should attempt a purely combinatorial approach to the study. This approach was later formulated and named Ramsey algebra. In this paper, we continue to look at heterogeneous Ramsey algebras, mainly characterizing various Ramsey algebras involving matrices.}

\section{Introduction}
The notion of a Ramsey algebra came as an offshoot of Carlson's pioneering work on (topological\footnote{What is known as a topological Ramsey space in the modern literature is known simply as a Ramsey space in Carlson's original work. The adjective ``topological" is added through Todorocevic's extension \cite{sT10} of Carlson's work on the subject.}) Ramsey spaces \cite{carl88} when he suggested that the class of Ramsey spaces induced by algebras can be singled out and be studied combinatorially. Of particular importance was the space of multivariable words. By making the right choices of alphabets, Carlson derived a wide array of classic combinatorial results as corollaries to a result of his concerning the topological Ramsey space of multivariable words, results which were otherwise derived on independent grounds. Among those classical results were the Hales-Jewett theorem, Ellentuck's theorem, and Hindman's theorem in particular.

The initiating studies on Ramsey algebras can be found in the papers \cite{teh2016ramsey}, \cite{wcT13b}, and \cite{tehidem16}. In \cite{tehidem16}, the second author addresses a question of Carlson concerning the existence of idempotent ultrafilters for Ramsey algebras. A precise connection between the notion of a (topological) Ramsey space and the notion of a Ramsey algebra can be found in Section 4 of \cite{teohteh}. As the name implies, Ramsey algebras are algebras possessing a certain homogeneity property as is the case with any Ramsey-type result. We will give a precise definition for what is meant by a (heterogeneous) Ramsey algebra in the next section.

An algebra consists of a nonempty domain and a collection of operations on the domain. We will view each algebra as a model of a many-sorted first-order language whose members (the nonlogical symbols) consist of function symbols. In this paper, we will look at the Ramsey algebraic aspects of various reducts of the matrix algebra consisting of matrix addition and multiplication, the field operations, and the determinant operation. We will study generalizations to wider classes of algebras and derive the properties concerning matrices as corollaries.

\section{Preliminaries}
The set of natural numbers will be denoted by $\omega$ and natural numbers include $0$. The positive integers will be denoted by $\mathbb{N}$.

Let $\{A_\xi\}_{\xi\in I}$ be a family of nonempty sets with $I$ the indexing set and let $\mathcal{F}$ be a family of operations on $\{A_\xi\}_{\xi\in I}$. A function $f$ is said to be an \emph{operation} on $\{A_\xi\}_{\xi\in I}$ if  the domain of $f$ equals $\prod_{\xi\in J}A_\xi$ for some finite $J\subseteq I$ and the codomain of $f$ is $A_\xi$ for some $\xi\in I$. The structure $(\{A_\xi\}_{\xi\in I}, \mathcal{F})$ is called a \emph{heterogeneous algebra} or \emph{algebra} in short. If $I$ is a singleton, the algebra is referred to as a \emph{homogeneous algebra} for emphasis. The family $\{A_\xi\}_{\xi\in I}$ is called the \emph{domain} of the algebra and each member of the family a \emph{phylum}. Every $\vec{e}\in{^\omega}I$ is called a sort and, if $\vec{b}\in{^\omega}\left(\bigcup_{\xi\in I}A_i\right)$, then $\vec{b}$ is said to be $\vec{e}$-sorted if $\vec{b}(i)\in A_{\vec{e}(i)}$ for each $i\in\omega$. If the lists of phyla or operations are not long, we will write them out explicitly. For instance, if $I=\{0, 1\}$ and $\mathcal{F}=\{\circ, +, \times\}$, the algebra would be written $(A_1, A_0, \circ, +, \times)$.

Following Carlson's convention, we require that the phyla in a given algebra be pairwise disjoint. Hence:

\begin{rem}
For any algebra $(\{A_\xi\}_{\xi\in I}, \mathcal{F})$, the sort of any given $\vec{b}\in{^\omega\!\left(\bigcup_{\xi\in I}A_i\right)}$ is unique.
\end{rem}

We will call an operation $f$ \emph{heterogeneous} if $f:A_{\xi_1}\times\cdots\times A_{\xi_N}\to A_{\xi_{N+1}}$ and there exist $i, j\in\{1, \ldots, N+1\}$ such that $A_{\xi_i}\neq A_{\xi_j}$. The domain of an operation $f$ will be denoted by $\dom(f)$ and the image set $\im(f)$. The identity function on any set $A$ is denoted by $\id_A$.

For notational convenience, we will sometimes write an $n$-tuple $\bar{x}=(x_1, \ldots, x_n)$ in the notation of a sequence $\vec{x}=\langle x_1, \ldots, x_n\rangle$ so that, for instance, if $f$ is an operation whose domain is the Cartesian product of $n$ sets, then $f(\vec{x})$ will mean the same thing as $f(\bar{x})$. The concatenation operation of sequences will be denoted by $\ast$. Now, let $\mathcal{F}$ be a family of functions on $\{A_\xi\}_{\xi\in I}$. Define $\mathcal{F}_0=\mathcal{F}\cup\{\id_{A_\xi}:\xi\in I\}$ and, suppose that $\mathcal{F}_k$ has been defined, let
\begin{equation*}
\mathcal{F}_{k+1}=\mathcal{F}_k\cup\{f:\varphi(f)\},
\end{equation*}
where $\varphi(f)$ is the statement that there exist some $N$-ary operation $g\in\mathcal{F}$ and some $h_1, \ldots, h_N\in\mathcal{F}_k$ such that $f(\vec{x})=g(h_1(\vec{x}_1), \ldots, h_N(\vec{x}_N))$ and $\vec{x}_1\ast\cdots\ast\vec{x}_N=\vec{x}=\langle x_1, \ldots, x_n\rangle$, where $n$ is the arity of $f$. Then, we have the following definition:

\begin{dfn}[\bf{Orderly Term}]
Denote the set $\bigcup_{k\in\omega}\mathcal{F}_k$ by $\ot(\mathcal{F})$. Each member of $\ot(\mathcal{F})$ is called an orderly term over $\mathcal{F}$.
\end{dfn}

\begin{eg}
Consider the addition $+$ and multiplication $\times$ operations on matrices. The composition $f(x_1, x_2, x_3, x_4)=\times(+(x_1, x_2), +(x_3, x_4))=(x_1+x_2)(x_3+x_4)$ is an orderly composition over $\{+, \times\}$. Another example is $g(x_1, x_2, x_3, x_4)=\times(+(\times(x_1, x_2), x_3), x_4)=(x_1x_2+x_3)x_4$. However, the compositions $h(x_1, x_2, x_3)=+(x_2, \times(x_1, x_3))=x_2+x_1x_3$ and $k(x_1, x_2, x_3)=\times(\times(x_3, +(x_2, x_1)), x_4)=(x_3(x_2+x_1))x_4$ are not.
\end{eg}

\begin{rem}\label{unaryorderlyterm}
Every unary orderly term over a given $\mathcal{F}$ is clearly a composition of unary operations of $\mathcal{F}$; conversely, every composition of unary operations in $\mathcal{F}$ is a unary orderly term over $\mathcal{F}$.
\end{rem}

%

\begin{dfn}[\bf{Reduction $\leq_\mathcal{F}$}]\label{reduction}
Let $(\{A_\xi\}_{\xi\in I}, \mathcal{F})$ be an algebra and let $\vec{a}, \vec{b}\in{^\omega}\left(\bigcup_{\xi\in I}A_\xi\right)$. Then $\vec{a}$ is said to be a \emph{reduction} of $\vec{b}$, written $\vec{a}\leq_\mathcal{F}\vec{b}$, if for each $j\in\omega$, there exist a subsequence $\vec{b}_j$ of $\vec{b}$ and an $f_j\in\ot(\mathcal{F})$ such that
\begin{enumerate}
\item $\vec{a}(j)=f_j(\vec{b}_j)$ and
\item $\vec{b}_0\ast\vec{b}_1\ast\cdots$ forms a subsequence of $\vec{b}$.
\end{enumerate}
\end{dfn}

Note that if $\vec{a}$ is a subsequence of $\vec{b}$, then $\vec{a}\leq_\mathcal{F}\vec{b}$. We will make free use of this fact throughout. Also note that $\leq_\mathcal{F}$ is a transitive relation and, if $\mathcal{G}\subseteq\mathcal{F}$ are families of operations and $\vec{a}\leq_\mathcal{G}\vec{b}$, then $\vec{a}\leq_\mathcal{F}\vec{b}$.

\begin{dfn}\label{FR}
Let $(\{A_\xi\}_{\xi\in I}, \mathcal{F})$ be an algebra and $\vec{e}\in{^\omega}I$. For each $\vec{e}$-sorted sequence $\vec{b}$, define
\begin{equation}\label{FRdef}
\fr_\mathcal{F}^{\vec{e}}(\vec{b})=\left\{\vec{a}(0):\vec{a}\leq_\mathcal{F}\vec{b}\;\text{and}\;\vec{a}\;\text{is}\;\vec{e}\text{-sorted}\right\}.
\end{equation}
\end{dfn}

We are now ready for the notion of a heterogeneous Ramsey algebra.

\begin{dfn}[\bf{$\vec{e}$-Ramsey Algebra}]\label{RA}
Suppose that $(\{A_\xi\}_{\xi\in I}, \mathcal{F})$ is an algebra and $\vec{e}\in{^\omega\!I}$. Then $(\{A_\xi\}_{\xi\in I}, \mathcal{F})$  is said to be an \emph{$\vec{e}$-Ramsey algebra} if, for each $\vec{e}$-sorted sequence $\vec{b}$ and each $X\subseteq A_{\vec{e}(0)}$, there exists an $\vec{e}$-sorted reduction $\vec{a}$ of $\vec{b}$ such that $\emph{FR}^{\vec{e}}_\mathcal{F}(\vec{a})$ is either contained in or disjoint from $X$.

Such a sequence $\vec{a}$ is said to be \emph{homogeneous} for $X$ (with respect to $\mathcal{F}$).
\end{dfn}

We now look at some examples.

\begin{thm}[Hindman]\label{semigroup}\label{hindman}
$(\mathbb{N}, +)$ is a Ramsey algebra. More generally, every semigroup is a Ramsey algebra.
\end{thm}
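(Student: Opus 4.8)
The first assertion is the special case $(S,\cdot)=(\mathbb{N},+)$ of the second, so it suffices to treat an arbitrary semigroup $(S,\cdot)$; here $I$ is a singleton, so there is a single sort and I suppress it throughout. The plan is to reduce the claim to the Galvin--Glazer form of Hindman's finite products theorem. First I would unwind the combinatorics: since $\cdot$ is associative, every orderly term over $\{\cdot\}$ of arity $n$ evaluates to the product $x_1\cdots x_n$ regardless of the parenthesization built into it (and $\id_S$ contributes the trivial one-variable term $x_1$), because the defining clause for an orderly term feeds the variables in in the prescribed order $x_1,\dots,x_n$. Consequently $\vec a\leq_{\{\cdot\}}\vec b$ holds iff there are finite nonempty sets $F_0,F_1,\dots\subseteq\omega$ with $\max F_j<\min F_{j+1}$ for all $j$ and $\vec a(j)=\prod_{i\in F_j}\vec b(i)$ (product in increasing order of index) --- that is, iff $\vec a$ is a \emph{product subsystem} of $\vec b$ in the usual sense --- and likewise $\fr_{\{\cdot\}}(\vec b)=FP(\vec b):=\{\prod_{i\in F}\vec b(i):F\subseteq\omega\ \text{finite nonempty}\}$, where one uses that $\vec b$ is infinite to extend any single chosen block to a full reduction. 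Thus the theorem is equivalent to the statement: for every sequence $\vec b$ in $S$ and every $X\subseteq S$ there is a product subsystem $\vec a$ of $\vec b$ with $FP(\vec a)\subseteq X$ or $FP(\vec a)\cap X=\emptyset$. Since $\{X,S\setminus X\}$ partitions $S$, this is exactly the two-colour instance of Hindman's theorem for semigroups.

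For the second step I would establish that statement via idempotent ultrafilters. Equip $\beta S$, the set of ultrafilters on $S$, with the operation extending $\cdot$ under which it is a compact right-topological semigroup. For the given $\vec b$ the sets $\overline{FP(\langle\vec b(i)\rangle_{i\geq m})}\subseteq\beta S$ ($m\in\omega$) are nonempty, closed, and decreasing in $m$, so $T:=\bigcap_{m}\overline{FP(\langle\vec b(i)\rangle_{i\geq m})}$ is nonempty by compactness, and a short computation shows $T$ is a subsemigroup of $\beta S$. By Ellis's lemma $T$ contains an idempotent $p=p\cdot p$, and membership in $T$ gives $FP(\langle\vec b(i)\rangle_{i\geq m})\in p$ for every $m$. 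Fix $X$; one of $X,S\setminus X$ lies in $p$, say $X\in p$ after renaming. Put $X^{\star}=\{s\in X:s^{-1}X\in p\}$ with $s^{-1}X=\{t\in S:st\in X\}$; then $X^{\star}\in p$ and $s^{-1}X^{\star}\in p$ for every $s\in X^{\star}$. Now recursively choose blocks $F_0<F_1<\cdots$ and set $\vec a(k)=\prod_{i\in F_k}\vec b(i)$: at stage $k$ the set $X^{\star}\cap\bigcap\{s^{-1}X^{\star}:s\in FP(\langle\vec a(j)\rangle_{j<k})\}$ lies in $p$, hence meets $FP(\langle\vec b(i)\rangle_{i\geq 1+\max F_{k-1}})$, and any element of this intersection has the required form $\prod_{i\in F_k}\vec b(i)$; the identity $FP(\langle\vec a(j)\rangle_{j\leq k})=FP(\langle\vec a(j)\rangle_{j<k})\cup\{\vec a(k)\}\cup\{s\,\vec a(k):s\in FP(\langle\vec a(j)\rangle_{j<k})\}$ together with the defining properties of $X^{\star}$ gives $FP(\langle\vec a(j)\rangle_{j\leq k})\subseteq X^{\star}\subseteq X$ by induction. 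The resulting $\vec a$ is a product subsystem of $\vec b$ with $FP(\vec a)\subseteq X$, as wanted.

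I expect the topological-algebraic package to be the main obstacle: that $\cdot$ extends to $\beta S$ making it right-topological, that every nonempty compact right-topological semigroup contains an idempotent, and that $T$ above is a subsemigroup --- the remaining ingredients (finite intersection property with compactness, the elementary properties of $X^{\star}$, and the recursion) being routine. For a paper of this kind the cleanest course is to quote Hindman's theorem for semigroups from the literature (in the strong, product-subsystem form) rather than redevelop the $\beta S$ machinery; alternatively one may cite a purely combinatorial proof, at the cost of extra length. Either way, the bookkeeping of the first paragraph --- identifying $\leq_{\{\cdot\}}$ with the product-subsystem relation and $\fr_{\{\cdot\}}$ with $FP$ --- is what connects Definition~\ref{RA} to the classical statement, and it is straightforward.
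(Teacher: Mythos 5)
The paper offers no proof of this theorem at all: it is quoted as a classical result (Hindman's finite-products theorem for semigroups, reference [NH74]), so any proof you supply is necessarily a "different route" from the paper's. Your argument is correct and is the standard one. The part that actually matters for this paper is your first paragraph: associativity collapses every orderly term over $\{\cdot\}$ to the ordered product of its arguments (the identity map giving the one-variable term), so $\leq_{\{\cdot\}}$ is precisely the product-subsystem relation and $\fr_{\{\cdot\}}(\vec b)=FP(\vec b)$; this is what identifies the paper's definition of a Ramsey algebra with the classical two-colour statement of Hindman's theorem, and your bookkeeping there is right. The Galvin--Glazer portion --- the idempotent in $\bigcap_m\overline{FP(\langle\vec b(i)\rangle_{i\ge m})}$, the star-set $X^{\star}$, and the block recursion using $FP(\langle\vec a(j)\rangle_{j\le k})=FP(\langle\vec a(j)\rangle_{j<k})\cup\{\vec a(k)\}\cup\{s\,\vec a(k):s\in FP(\langle\vec a(j)\rangle_{j<k})\}$ --- is the textbook proof of the strong (product-subsystem) form and every step you outline goes through; the only ingredients you would need to import are the right-topological semigroup structure on $\beta S$ and Ellis's idempotent lemma, both standard. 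In keeping with the paper's style, simply citing the semigroup form of Hindman's theorem after your translation paragraph would be the most economical way to finish.
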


The next theorem can be found in \cite{teh2016ramsey} as Theorem 5.5.

\begin{thm}\label{tehring}
The following are \emph{not} Ramsey algebras:
\begin{enumerate}
\item Infinite rings without zero divisors.
\item Infinite rings of characteristic zero with identity.
\end{enumerate}
\end{thm}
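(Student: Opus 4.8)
Since a ring $(R,+,\times)$ has a single phylum, $\vec{e}$-sortedness is vacuous and ``Ramsey algebra'' here means exactly: for every $\vec{b}\in{}^{\omega}R$ and every $X\subseteq R$ some reduction $\vec{a}\leq_{\{+,\times\}}\vec{b}$ has $\fr_{\{+,\times\}}(\vec{a})$ contained in or disjoint from $X$. I would use two easy reductions. First, it suffices to exhibit, for some $\vec{b}$, a \emph{finite} partition $R=X_1\cup\cdots\cup X_r$ such that no reduction of $\vec{b}$ has $\fr_{\{+,\times\}}$ inside a single cell: applying the Ramsey property successively to $X_1,X_2,\ldots$ along a descending chain of reductions (using transitivity of $\leq_{\{+,\times\}}$) would, after at most $r$ steps, yield a reduction of $\vec{b}$ whose $\fr_{\{+,\times\}}$ lies in one cell, a contradiction. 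Second, if $S\subseteq R$ is a subsemiring then it is closed under every orderly term over $\{+,\times\}$ (such a term is a value of a sum of products), so reductions of $S$-valued sequences remain in $S$ and their $\fr_{\{+,\times\}}$ is computed internally to $S$; hence $R$ being a Ramsey algebra would force $(S,+,\times)$ to be one. The plan is therefore to locate inside every $R$ as in the statement a finitely generated subsemiring that is not a Ramsey algebra, the combinatorial engine being a theorem of Hindman on sums and products of integers.

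In characteristic zero every instance reduces to $(\mathbb{N},+,\times)$ or a subsemiring of it. If $R$ has an identity then $n\mapsto n\cdot 1_R$ embeds $(\mathbb{N},+,\times)$. If $R$ has no identity but no zero divisors, then every nonzero $a$ has infinite additive order, and I would pick $a$ either transcendental over $\mathbb{Q}$ --- so that the subsemiring it generates maps homomorphically into $\mathbb{N}$ by evaluation at a fixed integer $\geq 2$ --- or, when $R$ has no transcendental element, a nonzero rational integer, which $R$ always then contains (isolate the constant term in a minimal integer relation of any nonzero element). Either way one lands on, or surjects onto, a subsemiring of $(\mathbb{N},+,\times)$. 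Now the heart of the matter: for any sequence $\vec{a}$ the set of finite sums $\mathrm{FS}(\vec{a})$ and the set of finite products $\mathrm{FP}(\vec{a})$ both lie in $\fr_{\{+,\times\}}(\vec{a})$, each being the value at a subsequence of $\vec{a}$ of a left-nested orderly term over $\{+,\times\}$. By Hindman's partition theorem, $\mathbb{N}$ admits a finite partition no cell of which contains $\mathrm{FS}(\vec{c})\cup\mathrm{FP}(\vec{c})$ for any increasing infinite sequence $\vec{c}$. Taking $\vec{b}$ a sufficiently rapidly increasing sequence in the relevant copy of $\mathbb{N}$ and pulling this partition back, a reduction $\vec{a}$ of $\vec{b}$ with $\fr_{\{+,\times\}}(\vec{a})$ inside one cell would have $\mathrm{FS}(\vec{a})\cup\mathrm{FP}(\vec{a})$ inside that cell, which is impossible; with the iteration noted above, this settles (ii) and the characteristic-zero instances of (i).

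For the positive-characteristic part of (i) I would split on local finiteness. If $R$ is not locally finite, some finitely generated subring is infinite and hence contains an element transcendental over $\mathbb{F}_p$; the subsemiring it generates is the semiring $t\mathbb{F}_p[t]$ of polynomials over $\mathbb{F}_p$ with zero constant term, and one runs the same scheme there via the analogue of Hindman's partition theorem for that semiring. If $R$ is locally finite, then every finitely generated subring is a finite domain, hence a field by Wedderburn's theorem, so $R$ is commutative and is an infinite algebraic extension of $\mathbb{F}_p$; now there is no infinite finitely generated subsemiring to reduce to, and the bad colouring must be built inside $R$ itself --- for instance from a bounded invariant of the degree $[\mathbb{F}_p(x):\mathbb{F}_p]$, exploiting that $x+y$ and $xy$ lie in $\mathbb{F}_p(x,y)$, so their degrees divide the least common multiple of those of $x$ and $y$ (degrees only grow under $+$ and $\times$ but cannot be homogenised for both simultaneously), after first choosing the witnessing $\vec{b}$ generically enough that no orderly combination of its entries falls into a small subfield.

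The step I expect to be the main obstacle is precisely this ``Hindman-type'' input. Over $\mathbb{N}$ it is already a substantial theorem and cannot be obtained cheaply: no colouring defined from a single additive or multiplicative invariant can work, since applying Hindman's finite sums theorem inside the image of that invariant leaves some $\fr_{\{+,\times\}}(\vec{a})$ monochromatic. Over $\mathbb{F}_p[t]$ one needs its analogue (which does not reduce to the $\mathbb{N}$-statement, as the base-$p$ digit identification $\mathbb{F}_p[t]\to\mathbb{N}$ respects neither operation), and in the locally finite case one needs it inside $\overline{\mathbb{F}_p}$, where the arithmetic of $\mathbb{N}$ underlying Hindman's construction is unavailable and one must argue with field-degree data; ensuring that such a colouring defeats \emph{every} reduction $\vec{a}\leq_{\{+,\times\}}\vec{b}$, and not merely the transparent ones, is where the bulk of the work lies. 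By contrast the subsemiring reductions, the Wedderburn dichotomy, and the iteration turning ``no monochromatic cell'' into ``not a Ramsey algebra'' are routine.
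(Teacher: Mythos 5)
First, a point of comparison: the paper does not actually prove this statement --- it is imported as Theorem 5.5 of \cite{teh2016ramsey} --- so there is no in-paper argument to measure yours against, only the cited one. Judged on its own terms, your outline is sound for the characteristic-zero cases. The two reductions are correct: iterating the Ramsey property along a descending chain of reductions does convert ``no cell of a finite partition contains $\fr_{\{+,\times\}}(\vec{a})$ for any reduction $\vec{a}$ of $\vec{b}$'' into failure of the Ramsey property, and a subsemiring is closed under orderly terms, so non-Ramseyness of $(\mathbb{N},+,\times)$ (reached via $n\mapsto n\cdot 1_R$ in case (2), and with some extra care about domains lacking an identity in the characteristic-zero instances of (1)) does transfer upward. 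The combinatorial engine you invoke --- a finite partition of $\mathbb{N}$ no cell of which contains $\mathrm{FS}(B)\cup\mathrm{FP}(B)$ for any infinite $B$ --- is a genuine published theorem of Hindman (though it is the negative sums-and-products result of his 1979--80 work, not what is normally called ``Hindman's partition theorem''), and since left-nested sums and left-nested products are orderly terms, $\mathrm{FS}(\vec{a})\cup\mathrm{FP}(\vec{a})\subseteq\fr_{\{+,\times\}}(\vec{a})$ for every reduction $\vec{a}$, whose terms form an infinite set once $\vec{b}$ is chosen strictly increasing with entries at least $2$. So part (2) and the characteristic-zero portion of part (1) are essentially complete modulo that citation.

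The genuine gap is the positive-characteristic portion of part (1), which the statement does cover: every infinite field of characteristic $p$, such as $\overline{\mathbb{F}_p}$ or $\mathbb{F}_p(t)$, is an infinite ring without zero divisors. For the non-locally-finite case your entire argument rests on ``the analogue of Hindman's partition theorem for $t\mathbb{F}_p[t]$'', which you correctly note does not follow from the $\mathbb{N}$ statement but which you neither prove nor cite; as written this is an unsupported assertion carrying the whole case. For the locally finite case (infinite algebraic extensions of $\mathbb{F}_p$) you offer only a heuristic whose central premise is false: degrees over $\mathbb{F}_p$ do not ``only grow under $+$ and $\times$'' --- $[\mathbb{F}_p(x+y):\mathbb{F}_p]$ merely divides $\mathrm{lcm}([\mathbb{F}_p(x):\mathbb{F}_p],[\mathbb{F}_p(y):\mathbb{F}_p])$ and can collapse to $1$ --- and no concrete colouring is produced, let alone a proof that it defeats every reduction of some $\vec{b}$ rather than just the transparent ones. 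You acknowledge that this is where the bulk of the work lies, but that is precisely the content of the theorem in this case; until these two sub-cases are supplied (or replaced by the argument in the cited source), part (1) is not proved.
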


Thus, for instance, neither the field of real numbers nor the ring of square matrices (for any given order) over an infinite field of characteristic zero is a Ramsey algebra. The latter fact owes itself to the fact that the underlying field can be embedded into such matrix rings as diagonal matrices.

Vector spaces are examples of heterogeneous algebras. A vector space is an algebra with two phyla $A_1, A_0$, the former of which we set to be the set of vectors and the latter the underlying scalar field; the operations are scalar multiplication, vector addition, and the addition and multiplication of scalar elements. The following is Theorem 6.1 of \cite{teohteh}.

\begin{thm}
Let $\mathcal{V}$ be a vector space. Then:
\begin{enumerate}
\item If the underlying scalar field is finite, then $\mathcal{V}$ is an $\vec{e}$-Ramsey algebra for all $\vec{e}$.
\item If the underlying field is infinite, then $\mathcal{V}$ is an $\vec{e}$-Ramsey algebra \emph{only} for sorts $\vec{e}$ that are nonconstant but eventually constant or for those that are constant with value $1$.
\end{enumerate}
\end{thm}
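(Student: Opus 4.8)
The plan is to begin by pinning down the shape of orderly terms in $\mathcal{V}=(A_1,A_0,\cdot,+_V,+_K,\times_K)$, writing $\cdot$ for scalar multiplication, $+_V$ for vector addition and $+_K,\times_K$ for the field operations. An orderly term with output in $A_0$ can only be built from $+_K$ and $\times_K$, so it is an orderly term over $\{+_K,\times_K\}$ applied to all-scalar input; an orderly term with output in $A_1$ is an ``ordered linear combination'', i.e.\ a nested expression in which every $\cdot$ has a $\{+_K,\times_K\}$-term on the left and a vector-valued subterm on the right, and every $+_V$ has vector-valued subterms on both sides. Two consequences will be used throughout: (a) on an all-scalar $\vec{e}$-sorted sequence, $\leq_\mathcal{F}$ and $\fr^{\vec e}_\mathcal{F}$ collapse to $\leq_{\{+_K,\times_K\}}$ and the homogeneous FR over $\{+_K,\times_K\}$; (b) on an all-vector sequence they collapse to $\leq_{\{+_V\}}$ and the FR over $\{+_V\}$. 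More generally, if $\vec a\leq_\mathcal{F}\vec b$ are $\vec e$-sorted then the subsequence of scalar entries of $\vec a$ is a $\leq_{\{+_K,\times_K\}}$-reduction of the subsequence of scalar entries of $\vec b$, with a parallel ``de-vectorized'' statement once the vector entries are confined to a fixed line $Kv_0$.

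For Part~1, suppose $A_0=K$ is finite. Then every coefficient occurring in an ordered linear combination ranges over the finite set $K$, so producing a vector out of vector data is a purely ``combinatorial'' combination over the finite alphabet $K$. This brings $\vec e$-homogeneity of $\mathcal{V}$ within reach of Carlson's partition theorem for the topological Ramsey space of multivariable words over the finite alphabet $K$; equivalently, a Galvin--Glazer-type idempotent-ultrafilter argument applies, using that $\{\mu_c:c\in K^{\times}\}$, $\mu_c(v)=cv$, is a finite group of automorphisms of $(A_1,+_V)$, and that the field $(K,+_K,\times_K)$, being finite, is itself a Ramsey algebra. The extra bookkeeping, namely that the two phyla alternate according to $\vec e$, is handled by processing a reduction block-by-block along $\vec e$; finiteness of $K$ keeps the number of relevant coefficient patterns finite at every stage. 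I do not expect a genuine obstacle here beyond threading the heterogeneous sort pattern through the standard argument.

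For Part~2, suppose $K$ is infinite; the content is that $\mathcal{V}$ fails to be $\vec e$-Ramsey whenever $\vec e$ is neither constant with value $1$ nor nonconstant-but-eventually-constant, i.e.\ whenever $\vec e$ is constant $0$ or is not eventually constant. If $\vec e$ is constant $0$, then by consequence~(a) $\mathcal{V}$ is $\vec e$-Ramsey iff $(K,+_K,\times_K)$ is a Ramsey algebra, which is impossible since a field is an infinite ring without zero divisors (Theorem~\ref{tehring}(1)). If $\vec e$ is not eventually constant it has infinitely many scalar and infinitely many vector positions. When in addition $\vec e(0)=0$, one checks that $\fr^{\vec e}_\mathcal{F}(\vec a)$ is exactly the homogeneous FR over $\{+_K,\times_K\}$ of the scalar subsequence of $\vec a$ --- any $\vec e$-sorted reduction witnessing membership may use an all-scalar first block and then complete its tail, since both sorts still occur infinitely often past that block --- and feeding $\mathcal{V}$ an $\vec e$-sorted sequence whose scalar subsequence is a prescribed bad sequence for $(K,+_K,\times_K)$ again contradicts Theorem~\ref{tehring}(1). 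When $\vec e(0)=1$, one fixes a nonzero $v_0\in A_1$, confines the input to the line $L=\{sv_0:s\in A_0\}$ (which $\fr^{\vec e}_\mathcal{F}$ never leaves), and transports a bad colouring of $K$ across $s\mapsto sv_0$; the key observation is that through the scalar positions one retains access to $\times_K$ and hence to the full field obstruction hidden inside the coefficients. Finally, to see that the two permitted sorts really are Ramsey (so that the characterisation is sharp): if $\vec e$ is constant $1$ then by consequence~(b) $\mathcal{V}$ is $\vec e$-Ramsey iff $(A_1,+_V)$ is, which holds by Theorem~\ref{hindman}; and if $\vec e$ is nonconstant but eventually constant, then a counting argument on the blocks of a reduction --- the rarely-occurring sort appears only finitely often, forcing the blocks that produce the outputs up to the last ``switch'' to be singletons --- shows $\fr^{\vec e}_\mathcal{F}(\vec a)=\{\vec a(0)\}$ for every $\vec e$-sorted $\vec a$, so $\mathcal{V}$ is trivially $\vec e$-Ramsey.

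The step I expect to be the real obstacle is the $\vec e(0)=1$, not-eventually-constant case of Part~2. In contrast with the $\vec e(0)=0$ case, the reduction one is allowed to pass to need not expose the bad scalars directly: they sit as coefficients inside vectors of $L$, and a priori one might worry that only a tame sub-semiring of coefficients is reachable. Making the transfer faithful requires either a careful analysis of which coefficients of $L$ are forced into $\fr^{\vec e}_\mathcal{F}(\vec a)$ for \emph{every} admissible $\vec a$ --- tracking the spurious coefficient $1$'s contributed by the vector positions and the tail-completability of reductions --- or a reduction to the already-settled $\vec e(0)=0$ case by absorbing or prepending the leading vector coordinates. A secondary and more routine difficulty is making the appeal to the finite-alphabet partition theorem in Part~1 fully rigorous in the heterogeneous, interleaved setting.
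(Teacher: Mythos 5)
This theorem is not proved in the paper at all --- it is imported verbatim as Theorem 6.1 of \cite{teohteh} --- so there is no in-paper proof to compare against; I can only assess your proposal on its own terms. Your overall skeleton (reduce everything to the shape of orderly terms, then split by sort) is sound, and several cases are genuinely complete: constant $\vec{e}\equiv 1$ via Hindman; constant $\vec{e}\equiv 0$ via Theorem \ref{tehring}(1); the observation that for nonconstant but eventually constant $\vec{e}$ the finitely many positions of the rare sort force every block up to the last switch to be a singleton, so $\fr^{\vec{e}}_\mathcal{F}(\vec{a})=\{\vec{a}(0)\}$ and homogeneity is trivial; and the not-eventually-constant, $\vec{e}(0)=0$ case, where the scalar subsequence of any $\vec{e}$-sorted reduction is a $\{+_K,\times_K\}$-reduction of the scalar subsequence of the input (since no operation maps vectors to scalars) and a bad sequence for the infinite field kills homogeneity.

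However, two essential pieces are missing, and you have flagged both without closing either. First, Part 1 is the deepest part of the theorem: that a vector space over a finite field is $\vec{e}$-Ramsey for \emph{all} sorts is a Hales--Jewett/Carlson-strength statement, and ``threading the heterogeneous sort pattern through the standard argument'' is precisely where the work lives --- the coefficients in an ordered linear combination are not free elements of $K$ but orderly $\{+_K,\times_K\}$-terms of the scalar entries interleaved according to $\vec{e}$, so the identification with variable words over the alphabet $K$ has to be built, not asserted. Second, the not-eventually-constant, $\vec{e}(0)=1$ case is left open, and your own diagnosis of why it is hard is correct and fatal to the sketch as it stands: restricting to the line $Kv_0$ gives a subalgebra, but the reachable coefficients there are expressions $\sigma_1c_{i_1}+\cdots+\sigma_mc_{i_m}$ in which the vector coefficients $c_i$ occur only linearly and each $\sigma_l$ may be a spurious $1$, so this is \emph{not} the full $\fr_{\{+_K,\times_K\}}$-set of any scalar sequence and the generic bad set from Theorem \ref{tehring} does not transport across $s\mapsto sv_0$. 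A bespoke bad colouring (e.g.\ a binary-representation argument on entries of the form $2^{2^i}$, in the spirit of Section 4) has to be constructed for exactly this restricted class of expressions; until that is done, the characterisation in Part 2 is not established.
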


Let $I$ be the indexing set of some algebra. Define
\begin{equation*}
\Omega=\{\vec{e}\in{^\omega}I:\;\text{if}\;\vec{e}(i)=\xi\;\text{for some}\;i,\;\text{then}\;\vec{e}(i)=\xi\;\text{for infinitely many}\;i\}.
\end{equation*}
Further, for each $\xi\in I$, the set of all $\vec{e}\in\Omega$ such that $\vec{e}(0)=\xi$ will be denoted by $\Omega_\xi$. For sorts $\vec{e}\in\Omega$, the sets defined by Eq. \ref{FRdef} has the following characterization:
\begin{equation}\label{Omegafr}
c\in\fr_\mathcal{F}^{\vec{e}}(\vec{b})\Longleftrightarrow c=f(\tau)
\end{equation}
for some $f\in\ot(\mathcal{F})$ and some finite subsequence $\tau$ of $\vec{b}$. In particular, in the case of homogeneous algebras, the sets given by Eq. \ref{FRdef} can be characterize as
\begin{equation}\label{FRdefHomAlg}
\fr_\mathcal{F}(\vec{b})=\{f(\tau):f\in\ot(\mathcal{F}), \tau\;\text{a subsequence of}\;\vec{b}\}.
\end{equation}

We will mainly be concerned with Ramsey algebraic properties involving sorts of the class $\Omega$ since results concerning sorts of this class is more uniform. The following theorem, which appears as Theorem 5.3 of \cite{teohteh}, gives a precise formulation of this uniformity:
\begin{thm}
Let $\mathcal{A}=\left(\bigcup_{\xi\in I}A_\xi, \mathcal{F}\right)$ be an algebra, $J\subseteq I$, and define $\vec{e}\in\Omega_\eta^J$ if and only if $\vec{e}\in\Omega$, $\vec{e}(0)=\eta$, and $\{\vec{e}(i):i\in\omega\}=J$. Then $\mathcal{A}$ is an $\vec{e}$-Ramsey algebra for some $\vec{e}\in\omega$ if and only if $\mathcal{A}$ is an $\vec{e}$-Ramsey algebra for all $\vec{e}\in\omega$.
\end{thm}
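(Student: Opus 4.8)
The plan is to prove the substantive direction of the biconditional: if $\mathcal{A}$ is an $\vec{e}$-Ramsey algebra for one sort $\vec{e}\in\Omega_\eta^J$, then it is one for every $\vec{e}'\in\Omega_\eta^J$. (The converse, ``for all'' implies ``for some'', is immediate once one notes that $\Omega_\eta^J\neq\emptyset$, which happens exactly when $\eta\in J$ and $J$ is countable; I read this as an implicit hypothesis.) The engine of the argument is the observation that any two sorts in $\Omega_\eta^J$ are \emph{each realizable as a subsequence of the other}: given $\vec{d},\vec{d}'\in\Omega_\eta^J$, one recursively selects positions $i_0<i_1<\cdots$ in $\omega$ by putting $i_0=0$ (legitimate since $\vec{d}(0)=\eta=\vec{d}'(0)$) and, given $i_k$, choosing $i_{k+1}>i_k$ with $\vec{d}'(i_{k+1})=\vec{d}(k+1)$, which exists because $\vec{d}(k+1)\in J$ and $\vec{d}'\in\Omega$ takes each value of $J$ infinitely often. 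Consequently, for any $\vec{d}'$-sorted sequence $\vec{c}$, taking the entries of $\vec{c}$ at these positions produces a $\vec{d}$-sorted subsequence of $\vec{c}$; and the roles of $\vec{d}$ and $\vec{d}'$ may be swapped by symmetry.

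With this in hand, fix $\vec{e}'\in\Omega_\eta^J$, an $\vec{e}'$-sorted sequence $\vec{b}'$, and $X\subseteq A_\eta$. First I would apply the observation (with $\vec{d}=\vec{e}$, $\vec{d}'=\vec{e}'$) to extract an $\vec{e}$-sorted subsequence $\vec{b}$ of $\vec{b}'$, so that $\vec{b}\leq_\mathcal{F}\vec{b}'$. The $\vec{e}$-Ramsey hypothesis then yields an $\vec{e}$-sorted reduction $\vec{a}\leq_\mathcal{F}\vec{b}$ with $\fr_\mathcal{F}^{\vec{e}}(\vec{a})$ contained in or disjoint from $X$. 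Applying the observation a second time (now with $\vec{d}=\vec{e}'$, $\vec{d}'=\vec{e}$), I would extract an $\vec{e}'$-sorted subsequence $\vec{a}'$ of $\vec{a}$; then $\vec{a}'\leq_\mathcal{F}\vec{a}\leq_\mathcal{F}\vec{b}\leq_\mathcal{F}\vec{b}'$, so $\vec{a}'\leq_\mathcal{F}\vec{b}'$ by transitivity, and $\vec{a}'$ is an $\vec{e}'$-sorted reduction of $\vec{b}'$ — the kind of witness we need.

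It remains to check that $\vec{a}'$ is homogeneous for $X$, and this is precisely where Eq. \ref{Omegafr} does the work. Since $\vec{e},\vec{e}'\in\Omega$, that characterization gives $c\in\fr_\mathcal{F}^{\vec{e}'}(\vec{a}')$ if and only if $c=f(\tau)$ for some orderly term $f$ and some finite subsequence $\tau$ of $\vec{a}'$, and likewise with $\vec{a}$ in place of $\vec{a}'$. Because $\vec{a}'$ is a subsequence of $\vec{a}$, every finite subsequence of $\vec{a}'$ is also one of $\vec{a}$, whence $\fr_\mathcal{F}^{\vec{e}'}(\vec{a}')\subseteq\fr_\mathcal{F}^{\vec{e}}(\vec{a})$; so $\fr_\mathcal{F}^{\vec{e}'}(\vec{a}')$ inherits from $\fr_\mathcal{F}^{\vec{e}}(\vec{a})$ the property of being contained in or disjoint from $X$, showing $\mathcal{A}$ is $\vec{e}'$-Ramsey. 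I do not anticipate a genuine obstacle here; the only points demanding care are the recursive selection in the observation (together with the harmless nonemptiness proviso on $\Omega_\eta^J$) and the bookkeeping ensuring that Eq. \ref{Omegafr} is invoked only for sequences that are genuinely $\vec{e}$- respectively $\vec{e}'$-sorted, which the two extractions guarantee.
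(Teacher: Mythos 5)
Your argument is correct and is the natural proof of this result: the mutual realizability of any two sorts of $\Omega_\eta^J$ as subsequences of one another (via the recursive position selection, which uses exactly that both sorts start at $\eta$, have range $J$, and attain each value of $J$ infinitely often), combined with Eq.~\ref{Omegafr} — which, as you implicitly and correctly use, must be read with the proviso that the orderly term $f$ has codomain $A_{\vec{e}(0)}$, a condition that matches between $\vec{e}$ and $\vec{e}'$ precisely because both begin with $\eta$ — yields $\fr_\mathcal{F}^{\vec{e}'}(\vec{a}')\subseteq\fr_\mathcal{F}^{\vec{e}}(\vec{a})$ and hence the transfer of homogeneity. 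Note that this paper does not actually prove the statement (it is quoted from Theorem 5.3 of the cited reference on heterogeneous Ramsey algebras, and the statement's ``$\vec{e}\in\omega$'' is a typo for ``$\vec{e}\in\Omega_\eta^J$'', as you read it), so there is no in-paper proof to compare against; your route, including the harmless nonemptiness proviso on $\Omega_\eta^J$ for the trivial direction, is the expected one.
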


Before ending this section, we mention a fact concerning subalgebras. If $\mathcal{A}=(\{A_\xi\}_{\xi\in I}$, $\mathcal{F})$ is an algebra, then a \emph{subalgebra} $\mathcal{A}'=(\{A'_\xi\}_{\xi\in I}, \mathcal{F}')$ of $\mathcal{A}$ is an algebra such that $A_\xi'\subseteq A_\xi$ for each $\xi\in I$ and, for each $f'\in\mathcal{F}'$, there exists an $f\in\mathcal{F}$ with $f:A_{\xi_1}\times\cdots\times A_{\xi_n}\to A_{\xi_{n+1}}$ such that $\displaystyle f'=f\upharpoonright\left(A_{\xi_1}'\times\cdots\times A_{\xi_n}'\right)$ (restriction property).

\begin{prop}\label{subalg}
For any sort $\vec{e}$, every subalgebra of an $\vec{e}$-Ramsey algebra is an $\vec{e}$-Ramsey algebra.
\end{prop}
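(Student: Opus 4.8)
The plan is to factor the passage from an $\vec{e}$-Ramsey algebra $\mathcal{A}=(\{A_\xi\}_{\xi\in I},\mathcal{F})$ to a subalgebra $\mathcal{A}'=(\{A'_\xi\}_{\xi\in I},\mathcal{F}')$ through two steps: first pass to a reduct (same carriers, fewer operations), then restrict the carriers to subsets closed under the remaining operations. Concretely, for each $f'\in\mathcal{F}'$ fix a witness $\widehat{f'}\in\mathcal{F}$ of the restriction property and set $\mathcal{G}=\{\widehat{f'}:f'\in\mathcal{F}'\}\subseteq\mathcal{F}$. Since each $\widehat{f'}$ sends the relevant $A'$-product into the relevant $A'_\xi$, the family $\{A'_\xi\}$ is closed under every member of $\mathcal{G}$, and the restriction of $\mathcal{G}$ to $\{A'_\xi\}$ is exactly $\mathcal{F}'$. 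Hence it suffices to prove (i) that $(\{A_\xi\}_{\xi\in I},\mathcal{G})$ is $\vec{e}$-Ramsey, and (ii) that restricting an $\vec{e}$-Ramsey algebra to a family of subcarriers closed under all of its operations again gives an $\vec{e}$-Ramsey algebra.

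Step (ii) should go through directly. Let $\vec{b}$ be $\vec{e}$-sorted for $(\{A'_\xi\},\mathcal{G}\upharpoonright\{A'_\xi\})$ and $X\subseteq A'_{\vec{e}(0)}$; then $\vec{b}$ is $\vec{e}$-sorted for $(\{A_\xi\},\mathcal{G})$ and $X\subseteq A_{\vec{e}(0)}$, so by (i) there is an $\vec{e}$-sorted $\vec{a}\leq_\mathcal{G}\vec{b}$ with $\fr^{\vec{e}}_\mathcal{G}(\vec{a})$ contained in or disjoint from $X$. An induction on the construction of orderly terms shows that a family of sets closed under $\mathcal{G}$ is closed under $\ot(\mathcal{G})$, that every orderly term over $\mathcal{G}$ restricts to an orderly term over $\mathcal{G}\upharpoonright\{A'_\xi\}$ agreeing with it on inputs drawn from $\{A'_\xi\}$, and that every orderly term over $\mathcal{G}\upharpoonright\{A'_\xi\}$ lifts to one over $\mathcal{G}$. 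From the first two facts, every entry of $\vec{a}$ lies in $\{A'_\xi\}$ (so $\vec{a}$ is $\vec{e}$-sorted there) and $\vec{a}\leq_{\mathcal{G}\upharpoonright\{A'_\xi\}}\vec{b}$; from the third, every $(\mathcal{G}\upharpoonright\{A'_\xi\})$-reduction of $\vec{a}$ is a $\mathcal{G}$-reduction of $\vec{a}$, whence $\fr^{\vec{e}}_{\mathcal{G}\upharpoonright\{A'_\xi\}}(\vec{a})\subseteq\fr^{\vec{e}}_\mathcal{G}(\vec{a})$, so $\fr^{\vec{e}}_{\mathcal{G}\upharpoonright\{A'_\xi\}}(\vec{a})$ is again contained in or disjoint from $X$, as needed.

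The real content is step (i): a reduct of an $\vec{e}$-Ramsey algebra is $\vec{e}$-Ramsey. I expect this to be the main obstacle, and it is not obtained by a single appeal to the Ramsey property of $(\{A_\xi\},\mathcal{F})$: that property only yields a reduction whose entries are orderly terms over the \emph{full} family $\mathcal{F}$, and such an $\vec{a}$ need not satisfy $\vec{a}\leq_\mathcal{G}\vec{b}$, so the homogeneous reduction it delivers is typically not a reduction of $\vec{b}$ over the smaller family $\mathcal{G}$ at all. One therefore has to build the $\mathcal{G}$-reduction by hand — for instance through a fusion argument interleaving repeated uses of the $\mathcal{F}$-Ramsey property, or by passing to a suitable ultrafilter that remains ``idempotent'' once the operations outside $\mathcal{G}$ are discarded. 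I would isolate this preservation statement as a separate lemma (or invoke it from the earlier literature on Ramsey algebras); granting it, step (ii) together with the factorization above completes the proof.
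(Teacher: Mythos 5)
Your step (ii) is, in substance, the paper's entire proof; the genuine gap is your step (i), which you leave unproven and which cannot simply be ``invoked from the earlier literature.'' The statement that a reduct $(\{A_\xi\}_{\xi\in I},\mathcal{G})$ of an $\vec{e}$-Ramsey algebra $(\{A_\xi\}_{\xi\in I},\mathcal{F})$ is again $\vec{e}$-Ramsey runs into exactly the obstruction you describe: the homogeneous $\mathcal{F}$-reduction $\vec{a}$ of $\vec{b}$ produced by the hypothesis need not satisfy $\vec{a}\leq_{\mathcal{G}}\vec{b}$, and no fusion or ultrafilter argument repairing this is given here or available in the papers this work builds on --- preservation of the Ramsey property under passage to reducts is, in that literature, an open question, not a quotable lemma. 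As written, then, the proposal reduces the proposition to a statement that is at least as hard as (and quite possibly harder than) the proposition itself.

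The repair is to drop the detour through reducts: the paper's own proof tacitly uses the stronger, standard reading of ``subalgebra,'' namely that the subcarriers $A'_\xi$ are closed under \emph{every} operation of $\mathcal{F}$ and that $\mathcal{F}'$ consists of the restrictions of \emph{all} of them. Indeed, the paper argues that since the terms of $\vec{b}$ lie in $\bigcup_{\xi\in I}A'_\xi$, ``the restriction and closure properties'' force the terms of the homogeneous $\mathcal{F}$-reduction $\vec{a}$ to lie in $\bigcup_{\xi\in I}A'_\xi$ as well, whence $\vec{a}\leq_{\mathcal{F}'}\vec{b}$ and $\fr^{\vec{e}}_{\mathcal{F}'}(\vec{a})=\fr^{\vec{e}}_{\mathcal{F}}(\vec{a})$; that inference is false if some $g\in\mathcal{F}$ has no counterpart in $\mathcal{F}'$, since such a $g$ need not preserve the subcarriers. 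Under the intended reading your family $\mathcal{G}$ is all of $\mathcal{F}$, step (i) is vacuous, and step (ii) with $\mathcal{G}=\mathcal{F}$ is precisely the paper's argument (your orderly-term inductions are the honest justification of what the paper dispatches in one sentence). If instead you hold to the literal one-directional clause in the paper's definition --- each $f'\in\mathcal{F}'$ is the restriction of \emph{some} $f\in\mathcal{F}$ --- then the proposition really does need your step (i), the paper's proof does not cover that case either, and your argument remains incomplete until that preservation statement is actually established.
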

\begin{proof}
Suppose $\vec{e}$ is a sort, $\mathcal{A}=(\{A_\xi\}_{\xi\in I}, \mathcal{F})$ is an $\vec{e}$-Ramsey algebra, and $\mathcal{A}'=(\{A_\xi'\}_{\xi\in I}, \mathcal{F}')$ is a subalgebra of $\mathcal{A}$. Let $X\subseteq A'_{\vec{e}(0)}$ and let an $\vec{e}$-sorted sequence $\vec{b}$ of $\bigcup_{\xi\in I}A'_\xi$ be given.

Since $A_\xi'\subseteq A_\xi$, $\vec{b}$ is also a sequence of $\bigcup_{\xi\in I}A_\xi$. Thus, pick an $\vec{e}$-sorted $\vec{a}\leq_\mathcal{F}\vec{b}$ homogeneous for $X$, i.e. $\fr_\mathcal{F}^{\vec{e}}(\vec{a})\subseteq X$ or $\fr_\mathcal{F}^{\vec{e}}(\vec{a})\subseteq A_{\vec{e}(0)}\setminus X$. Since the terms of $\vec{b}$ consist of elements of $\bigcup_{\xi\in I}A'_\xi$, the terms of $\vec{a}$ are also members of $\bigcup_{\xi\in I}A'_\xi$ by the restriction and closure properties of operations within a subalgebra, whereby $\vec{a}\leq_{\mathcal{F}'}\vec{b}$. In addition, $\fr_\mathcal{F}^{\vec{e}}(\vec{a})=\fr_{\mathcal{F}'}^{\vec{e}}(\vec{a})\subseteq A_{\vec{e}(0)}'$. Consequently, $\fr_{\mathcal{F}'}^{\vec{e}}(\vec{a})\subseteq X$ or $\fr_{\mathcal{F}'}^{\vec{e}}(\vec{a})\subseteq A_{\vec{e}(0)}'\setminus X$.
\end{proof}

\section{Ramsey-type Theorems for Various Matrix Algebras}
Throughout the paper, the set of $n\times n$ square matrices over a field $\mathbb{F}$ will be denoted by $\mathcal{M}_n(\mathbb{F})$. Addition and multiplication of matrices will be denoted by $+$ and $\times$, respectively. Addition and multiplication of field elements will come with a subscript $\mathbb{F}$. The field $\mathbb{F}$ will be assumed to be of characteristic $0$ throughout; in such a case, the rational numbers are embedded within $\mathbb{F}$, hence we will speak freely of the isomorphic copies of the integers, the natural numbers, or the rationals in the $\mathbb{F}$ in question simply as \emph{the} integers and so on. Throughout, we fix the indexing in such a way that the set $\mathbb{F}$ of scalars receives the index $0$ while the set $\mathbb{V}$ of vectors receives the index $1$.

\begin{assp}
The field $\mathbb{F}$ is assumed to be infinite with characteristic $0$ throughout this section.
\end{assp}

In this section, we begin the study of the Ramsey-algebraic properties of various matrix algebras. We call the algebra $(\mathcal{M}_n({\mathbb{F}}), \mathbb{F}, +, \times, +_{\mathbb{F}}, \times_{\mathbb{F}}, |\ast|)$ the \emph{full} matrix algebra and any reduct of it is known as \emph{a} matrix algebra. We will be studying these algebras by looking at slightly more general algebras.

\begin{assp}\label{assp2}
The algebras $\mathcal{A}=(A_0, A_1, \mathcal{F})$ studied in this section are of the form $\mathcal{F}=\mathcal{G}_0\cup\mathcal{G}_1\cup\mathcal{H}$, where $\mathcal{G}_0$ consists of operations on $A_0$, $\mathcal{G}_1$ consists of operations on $A_1$, and $\mathcal{H}$ consists only of unary operations from $A_1$ into $A_0$ and is assumed to be nonempty. We will also denote the algebra $(A_0, \mathcal{G}_0)$ by $\mathcal{A}_0$ and the algebra $(A_1, \mathcal{G}_1)$ by $\mathcal{A}_1$.
\end{assp}

Situations when $\mathcal{H}$ is empty can be found in Theorem 5.2 (1) of \cite{teohteh}. It states that, for each $i=0, 2$ and each $\vec{e}\in\Omega_i$, the algebra $(A_0, A_1, \mathcal{G}_0, \mathcal{G}_1)$ is an $\vec{e}$-Ramsey algebra if and only if $\mathcal{A}_i$ is a Ramsey algebra. This leads to the following two corollaries.

\begin{cor}
Let $n\in\mathbb{N}$, $\vec{e}\in\Omega$, and let $\mathcal{A}'$ be a reduct of the full matrix algebra not containing the determinant operation. Then $\mathcal{A}'$ is an $\vec{e}$-Ramsey algebra if and only if $\mathcal{G}_{\vec{e}(0)}$ consists of at most one of the two ring operations pertaining to the phylum $A_{\vec{e}(0)}$.
\end{cor}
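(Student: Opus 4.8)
The plan is to peel off the heterogeneous structure of $\mathcal{A}'$ and reduce the question to a homogeneous one that is already settled by results quoted above. Since $\mathcal{A}'$ is a reduct of the full matrix algebra that omits the determinant, and $|\ast|: \mathcal{M}_n(\mathbb{F})\to\mathbb{F}$ is the \emph{only} operation of the full matrix algebra whose domain involves the phylum $A_1=\mathcal{M}_n(\mathbb{F})$ and whose codomain is $A_0=\mathbb{F}$, the family $\mathcal{H}$ of Assumption \ref{assp2} is empty for $\mathcal{A}'$; moreover $\mathcal{G}_0\subseteq\{+_{\mathbb{F}}, \times_{\mathbb{F}}\}$ and $\mathcal{G}_1\subseteq\{+, \times\}$. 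Thus $\mathcal{A}'$ lies outside the scope of Assumption \ref{assp2} but inside the ``$\mathcal{H}$ empty'' case handled by Theorem 5.2(1) of \cite{teohteh}. Because $\vec{e}\in\Omega$ forces $\vec{e}(0)\in\{0,1\}$, that result applies and yields: $\mathcal{A}'$ is an $\vec{e}$-Ramsey algebra if and only if the homogeneous algebra $\mathcal{A}_{\vec{e}(0)}$ is a Ramsey algebra, where $\mathcal{A}_0=(\mathbb{F},\mathcal{G}_0)$ and $\mathcal{A}_1=(\mathcal{M}_n(\mathbb{F}),\mathcal{G}_1)$.

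Next I would classify, for $R\in\{\mathbb{F}, \mathcal{M}_n(\mathbb{F})\}$ and $\mathcal{G}$ a subset of the two ring operations on $R$, exactly when $(R,\mathcal{G})$ is a Ramsey algebra, via a three-way split on $|\mathcal{G}|$. If $\mathcal{G}=\emptyset$, then $\ot(\mathcal{G})=\{\id_R\}$, reductions are precisely subsequences, and $\fr_{\mathcal{G}}(\vec{b})$ is just the set of terms of $\vec{b}$, so the infinite pigeonhole principle produces a homogeneous reduction for any $X\subseteq R$; hence $(R,\emptyset)$ is a Ramsey algebra. If $\mathcal{G}$ is a singleton, then $(R,\mathcal{G})$ is a semigroup, each of $+_{\mathbb{F}}$, $\times_{\mathbb{F}}$, $+$, $\times$ being associative, so it is a Ramsey algebra by Theorem \ref{hindman}. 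If $\mathcal{G}$ contains both ring operations, then $(R,\mathcal{G})$ is either $(\mathbb{F},+_{\mathbb{F}},\times_{\mathbb{F}})$ or $(\mathcal{M}_n(\mathbb{F}),+,\times)$, each an infinite ring of characteristic zero with identity ($1$, respectively $I_n$; characteristic zero for $\mathcal{M}_n(\mathbb{F})$ because $k I_n\neq 0$ whenever $0\neq k\in\mathbb{F}$), hence \emph{not} a Ramsey algebra by Theorem \ref{tehring}(2). Assembling the three cases, $\mathcal{A}_{\vec{e}(0)}$ is a Ramsey algebra precisely when $\mathcal{G}_{\vec{e}(0)}$ contains at most one of the two ring operations on $A_{\vec{e}(0)}$, which combined with the previous paragraph is the claimed equivalence.

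I do not expect a real obstacle, since the substantive content is all imported: the reduction to a single phylum is exactly Theorem 5.2(1) of \cite{teohteh}, the two positive subcases are Hindman's theorem (Theorem \ref{hindman}) and a trivial pigeonhole argument, and the negative subcase is Theorem \ref{tehring}(2). The only points deserving a little care are bookkeeping: verifying that excluding the determinant genuinely forces $\mathcal{H}=\emptyset$ so the hypothesis of the cited theorem is met, and confirming that both $\mathbb{F}$ and $\mathcal{M}_n(\mathbb{F})$ meet the infiniteness, identity, and characteristic-zero hypotheses of Theorem \ref{tehring}(2) --- all immediate from the standing assumption that $\mathbb{F}$ is infinite of characteristic zero.
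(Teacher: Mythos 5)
Your proposal is correct and follows the same overall route as the paper: reduce to the homogeneous algebra $\mathcal{A}_{\vec{e}(0)}$ via Theorem 5.2(1) of \cite{teohteh} (legitimate because dropping the determinant leaves $\mathcal{H}=\varnothing$), then classify the homogeneous reducts with zero, one, or two ring operations. The one place you diverge is the negative sub-case $(\mathcal{M}_n(\mathbb{F}),+,\times)$: you apply Theorem \ref{tehring}(2) directly, observing that the matrix ring is an infinite ring with identity of characteristic zero, whereas the paper instead embeds $(\mathbb{F},+_\mathbb{F},\times_\mathbb{F})$ into $(\mathcal{M}_n(\mathbb{F}),+,\times)$ as the scalar diagonal matrices $r\mapsto\diag(r,\ldots,r)$, notes that these form a subalgebra, and concludes via Proposition \ref{subalg} that the matrix ring cannot be Ramsey since its subalgebra $(\mathbb{F},+_\mathbb{F},\times_\mathbb{F})$ is not. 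Your direct citation is valid on the face of Theorem \ref{tehring}(2) as stated (zero divisors obstruct only part (1)) and is marginally shorter; the paper's embedding-plus-subalgebra route is the pattern the authors prefer for matrix rings throughout --- indeed, in a later corollary they explicitly decline to apply Theorem \ref{tehring} to matrix rings as a black box because they need control over the witnessing bad sequence, which the diagonal embedding supplies. The remaining sub-cases in your write-up (pigeonhole for the empty reduct, Hindman/semigroups for the singleton reducts) are exactly what the paper relies on.
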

\begin{proof}
The proof is immediate; we only need to look at the appropriate $\mathcal{A}_0$ or $\mathcal{A}_1$ to decide if $\mathcal{A}'$ or not. Of a priori importance is the fact that any ring of matrices $(\mathcal{M}_n(\mathbb{F}), +, \times)$ is not a Ramsey algebra. This hinges upon the fact that $\mathbb{F}$ can be embedded into $\mathcal{M}_n({\mathbb{F}})$ as diagonal matrices, namely $r\mapsto\diag(r, \ldots, r)$, where $\diag(r, \ldots, r)$ denotes the diagonal matrix whose diagonal elements are all $r$. Diagonal matrices of this form thus form a subalgebra of $(\mathcal{M}_n({\mathbb{F}}), +, \times)$ and, since the subalgebra $(\mathbb{F}, +_\mathbb{F}, \times_\mathbb{F})$ is not a Ramsey algebra, $(\mathcal{M}_n({\mathbb{F}}), +, \times)$ is not a Ramsey algebra either.
\end{proof}




We have, therefore, identified the Ramsey algebraic properties of all reducts of the full matrix algebra for which the determinant operation is absent.

As per Assumption 1, the algebras of concern are such that any heterogeneous operations are unary from $A_1$ into $A_0$. This condition allows us to derive the following theorem.

\begin{thm}\label{big}
For each $\vec{e}\in\Omega_1$, $\mathcal{A}$ is an $\vec{e}$-Ramsey algebra if and only if $\mathcal{A}_1$ is a Ramsey algebra.
\end{thm}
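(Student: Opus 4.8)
The plan is to prove both directions, using the fact that heterogeneous operations in $\mathcal{A}$ are unary maps from $A_1$ to $A_0$, so that every orderly term over $\mathcal{F}$ with codomain $A_1$ is in fact an orderly term over $\mathcal{G}_1$ alone (it can make no use of $\mathcal{G}_0$ or $\mathcal{H}$, since nothing maps back into $A_1$). This observation, together with the characterization (\ref{Omegafr}) of $\fr_\mathcal{F}^{\vec{e}}$ for sorts in $\Omega$, is the structural backbone of the argument.

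For the easy direction, suppose $\mathcal{A}$ is an $\vec{e}$-Ramsey algebra for some (equivalently, by Theorem 2.5, every) $\vec{e}\in\Omega_1$. Fix $\vec{e}\in\Omega_1$. Given a sequence $\vec{b}$ in $A_1$ and a set $X\subseteq A_1$, note $\vec{b}$ is $\vec{e}$-sorted (its sort is constantly $1$, the restriction of $\vec{e}$); apply the $\vec{e}$-Ramsey property to obtain an $\vec{e}$-sorted reduction $\vec{a}\leq_\mathcal{F}\vec{b}$ homogeneous for $X$. Since $\vec{a}$ is $\vec{e}$-sorted and $\vec{e}(i)=1$ whenever... well, $\vec{e}(0)=1$ but $\vec{e}$ may take the value $0$ too, so I must be careful: choose instead $\vec{e}$ to be the constant sort $1$, which lies in $\Omega_1$, so that $\vec{e}$-sorted means ``a sequence in $A_1$''. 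Then $\vec{a}$ is a sequence in $A_1$, each $\vec{a}(j)=f_j(\vec{b}_j)$ for an orderly term $f_j$ over $\mathcal{F}$ with codomain $A_1$; by the observation above each $f_j$ is an orderly term over $\mathcal{G}_1$, so $\vec{a}\leq_{\mathcal{G}_1}\vec{b}$, and $\fr_{\mathcal{G}_1}(\vec{a})=\fr_\mathcal{F}^{\vec{e}}(\vec{a})$ is contained in or disjoint from $X$. Hence $\mathcal{A}_1$ is a Ramsey algebra.

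For the harder direction, assume $\mathcal{A}_1$ is a Ramsey algebra and fix $\vec{e}\in\Omega_1$ (I will take $\vec{e}$ constantly $1$, relying again on Theorem 2.5 to transfer to arbitrary $\vec{e}\in\Omega_1$; I should double-check that Theorem 2.5 applies when the two candidate sorts have different value-sets $J$ — rereading its statement, it is stated for a fixed $J$, so in fact I should argue directly for general $\vec{e}\in\Omega_1$). Given an $\vec{e}$-sorted $\vec{b}$ and $X\subseteq A_1=A_{\vec{e}(0)}$: the key point is that, by (\ref{Omegafr}), membership $c\in\fr_\mathcal{F}^{\vec{e}}(\vec{b})$ for $c\in A_1$ is witnessed by $c=f(\tau)$ with $f$ an orderly term over $\mathcal{F}$ of codomain $A_1$ and $\tau$ a finite subsequence of $\vec{b}$; such $f$ uses only $\mathcal{G}_1$, and $\tau$ may be taken to lie in the $A_1$-part of $\vec{b}$. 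So if $\vec{b}_1$ is the subsequence of $\vec{b}$ consisting of its $A_1$-entries (infinite, since $\vec{e}\in\Omega$ takes the value $1$ infinitely often), then $\fr_\mathcal{F}^{\vec{e}}(\vec{b}) = \fr_{\mathcal{G}_1}(\vec{b}_1)$. Apply the Ramsey property of $\mathcal{A}_1$ to $\vec{b}_1$ and $X$ to get $\vec{a}_1\leq_{\mathcal{G}_1}\vec{b}_1$ with $\fr_{\mathcal{G}_1}(\vec{a}_1)$ contained in or disjoint from $X$. Now reconstruct an $\vec{e}$-sorted reduction $\vec{a}$ of $\vec{b}$: interleave the entries of $\vec{a}_1$ at the positions where $\vec{e}$ equals $1$, and at positions where $\vec{e}$ equals some $\eta\neq 1$ fill in using a subsequence of the $A_\eta$-entries of $\vec{b}$ (as a subsequence, this respects $\leq_\mathcal{F}$); one must check the disjointness-of-blocks condition (2) in Definition 2.4 can be met, which is a routine bookkeeping argument since $\vec{b}_1$ contributes disjoint blocks witnessing $\vec{a}_1\leq_{\mathcal{G}_1}\vec{b}_1$ and the remaining positions draw on disjoint ``leftover'' portions of $\vec{b}$. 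Finally $\fr_\mathcal{F}^{\vec{e}}(\vec{a}) = \fr_{\mathcal{G}_1}(\vec{a}_1)$ is contained in or disjoint from $X$, as required.

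The main obstacle is the reconstruction step in the hard direction: interleaving $\vec{a}_1$ back into the ambient sequence $\vec{b}$ while simultaneously (i) keeping $\vec{a}$ $\vec{e}$-sorted, (ii) ensuring the witnessing blocks $\vec{b}_j$ are pairwise disjoint and their concatenation is a subsequence of $\vec{b}$ (condition (2) of Definition 2.4), and (iii) preserving the equality $\fr_\mathcal{F}^{\vec{e}}(\vec{a})=\fr_{\mathcal{G}_1}(\vec{a}_1)$. The cleanest way to handle (ii) is probably to thin $\vec{b}$ first so that between any two consecutive $A_1$-entries there are enough entries of every other phylum appearing in $\vec{e}$, then process $\vec{b}$ left to right, greedily allocating blocks. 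I expect the equality in (iii) to require an easy induction on orderly-term structure to see that no orderly term of codomain $A_1$ can ``escape'' into $A_0$ and come back — which is exactly the content of Assumption \ref{assp2}.
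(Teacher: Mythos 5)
Your overall strategy --- the observation that every orderly term over $\mathcal{F}$ with codomain $A_1$ lies entirely in $\ot(\mathcal{G}_1)$, extracting the $A_1$-subsequence in one direction and re-interleaving in the other --- is exactly the paper's, and your backward direction (including the thinning and greedy block allocation needed to satisfy condition (2) of Definition \ref{reduction}) is sound; the paper itself only gestures at that bookkeeping with ``by carefully going through the definition of reduction.''

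The gap is in the forward direction. You replace the given $\vec{e}\in\Omega_1$ by the constant sort $\langle 1,1,\ldots\rangle$, justifying this by the uniformity theorem; but, as you yourself notice when treating the other direction, that theorem only identifies sorts within a fixed class $\Omega_\eta^J$, i.e.\ sorts with the same value-set $J$. The constant sort has $J=\{1\}$ while a nonconstant $\vec{e}\in\Omega_1$ has $J=\{0,1\}$, so the transfer is unavailable, and the theorem genuinely asserts the equivalence for each individual $\vec{e}\in\Omega_1$, nonconstant ones included. You never return to repair this. The fix is the dual of your reconstruction step and is what the paper does: given a test sequence $\vec{\beta}\in{^\omega}A_1$ and $X\subseteq A_1$ for $\mathcal{A}_1$, pad $\vec{\beta}$ to an $\vec{e}$-sorted $\vec{b}$ by inserting arbitrary elements of $A_0$ at the positions $i$ with $\vec{e}(i)=0$ (no interleaving care is needed here, since you are free to choose $\vec{b}$); apply the $\vec{e}$-Ramsey property of $\mathcal{A}$ to $\vec{b}$ and $X$, and let $\vec{\alpha}$ be the $A_1$-subsequence of the resulting homogeneous $\vec{a}$. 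Your structural observation then yields $\vec{\alpha}\leq_{\mathcal{G}_1}\vec{\beta}$ and $\fr_{\mathcal{G}_1}(\vec{\alpha})=\fr_\mathcal{F}^{\vec{e}}(\vec{a})$, so $\vec{\alpha}$ is homogeneous for $X$ and $\mathcal{A}_1$ is Ramsey.
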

\begin{proof}
($\Rightarrow$) Suppose that $\mathcal{A}$ is an $\vec{e}$-Ramsey algebra and $\vec{\beta}$ is an infinite sequence of $A_1$. Pick any $\vec{e}$-sorted sequence $\vec{b}$ so that $\vec{\beta}$ forms the subsequence of $\vec{b}$ all of whose terms belong in $A_1$.

By hypothesis, let $\vec{a}\leq_\mathcal{F}\vec{b}$ be $\vec{e}$-sorted and homogeneous for $X$. We observe that, if $\vec{\alpha}$ is the subsequence of $\vec{a}$ all of whose terms are members of $A_1$, then (1) $\vec{\alpha}\leq_{\mathcal{G}_1}\vec{\beta}$ and (2) $\vec{\alpha}$ is homogeneous for $X$. This is so because every orderly term over $\mathcal{F}$ with codomain $A_1$ must have as domain a Cartesian power of $A_1$. It then follows that $(A_1, \mathcal{G}_1)$ is a Ramsey algebra.

($\Leftarrow$) Suppose $(A_1, \mathcal{G}_1)$ is a Ramsey algebra. Given $X\subseteq A_1$ and an $\vec{e}$-sorted sequence $\vec{b}$, let $\vec{\beta}$ be the subsequence of $\vec{b}$ consisting of elements of $A_1$. By hypothesis, pick an $\vec{\alpha}\leq_{\mathcal{G}_1}\vec{\beta}$ homogeneous for $X$. In fact, by carefully going through the definition of reduction, we can pick such an $\vec{\alpha}$ so that, for any $\vec{e}$-sorted $\vec{a}$ such that $\vec{\alpha}$ is the subsequence of $\vec{a}$ whose terms are members of $A_1$, we have $\vec{a}\leq_\mathcal{F}\vec{b}$.

Thus, again by the observation we made in the ($\Rightarrow$) case, we have that $\fr_\mathcal{F}^{\vec{e}}(\vec{a})=\fr_{\mathcal{G}_1}(\vec{\alpha})$, whence the homogeneity of $\vec{a}$ for $X$ is established.
\end{proof}

Note that, for $\vec{e}\in\Omega_1$, Theorem \ref{big} offers a complete answer as to when a matrix algebra is an $\vec{e}$-Ramsey algebra:

\begin{cor}
Let $n\in\mathbb{N}$. For any $\vec{e}\in\Omega_1$, all reducts of the full matrix algebra $(\mathcal{M}_n({\mathbb{F}})$, $\mathbb{F}$, $+$, $\times$, $+_{\mathbb{F}}$, $\times_{\mathbb{F}}$, $|\ast|)$ is an $\vec{e}$-Ramsey algebra except for those reducts that keep both matrix operations.
\end{cor}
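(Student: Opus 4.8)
The plan is to derive the corollary directly from Theorem \ref{big} by checking that every reduct of the full matrix algebra satisfies Assumption \ref{assp2}, and then identifying exactly when the scalar-sorted constituent $\mathcal{A}_1$ (which here is a reduct of the matrix ring $(\mathcal{M}_n(\mathbb{F}), +, \times)$) is a Ramsey algebra. First I would observe that for $\vec{e}\in\Omega_1$ we have $\vec{e}(0)=1$, so $A_{\vec{e}(0)} = \mathcal{M}_n(\mathbb{F})$ and the relevant subalgebra to examine is $\mathcal{A}_1 = (\mathcal{M}_n(\mathbb{F}), \mathcal{G}_1)$, where $\mathcal{G}_1$ is the set of matrix operations ($+$, $\times$, or both, or neither) retained in the reduct $\mathcal{A}'$. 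Note that the heterogeneous operation $|\ast|$ (determinant), if present, is unary from $A_1$ into $A_0$, and the field operations $+_\mathbb{F}, \times_\mathbb{F}$ lie in $\mathcal{G}_0$; so every reduct of the full matrix algebra does conform to the shape required by Assumption \ref{assp2}, with the caveat that $\mathcal{H}$ must be nonempty --- but this only matters for reducts keeping the determinant, and when $\mathcal{H} = \varnothing$ one instead invokes Theorem 5.2(1) of \cite{teohteh} as already noted in the text; either way the conclusion of Theorem \ref{big} (namely $\mathcal{A}'$ is $\vec{e}$-Ramsey iff $\mathcal{A}_1$ is a Ramsey algebra) applies.

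Next I would classify the four possibilities for $\mathcal{G}_1$. If $\mathcal{G}_1 = \varnothing$, then $\mathcal{A}_1 = (\mathcal{M}_n(\mathbb{F}))$ with no operations, which is trivially a Ramsey algebra (any subsequence of $\vec{\beta}$ works, and $\fr$ is just the set of entries). If $\mathcal{G}_1 = \{+\}$, then $\mathcal{A}_1 = (\mathcal{M}_n(\mathbb{F}), +)$ is a group, hence a semigroup, hence a Ramsey algebra by Theorem \ref{hindman}. If $\mathcal{G}_1 = \{\times\}$, then $\mathcal{A}_1 = (\mathcal{M}_n(\mathbb{F}), \times)$ is a monoid (the multiplicative monoid of $n\times n$ matrices), again a semigroup and hence a Ramsey algebra by Theorem \ref{hindman}. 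Finally, if $\mathcal{G}_1 = \{+, \times\}$, then $\mathcal{A}_1 = (\mathcal{M}_n(\mathbb{F}), +, \times)$ is the full matrix ring; this is \emph{not} a Ramsey algebra, for exactly the reason given in the proof of Corollary 3.1 --- the diagonal embedding $r \mapsto \diag(r,\dots,r)$ realizes $(\mathbb{F}, +_\mathbb{F}, \times_\mathbb{F})$ as a subalgebra, and an infinite field of characteristic $0$ with identity is not a Ramsey algebra by Theorem \ref{tehring}(2), so by Proposition \ref{subalg} neither is $(\mathcal{M}_n(\mathbb{F}), +, \times)$. Assembling these four cases via Theorem \ref{big} gives: $\mathcal{A}'$ is an $\vec{e}$-Ramsey algebra precisely when $\mathcal{G}_1$ does not contain both $+$ and $\times$, i.e. for all reducts except those that keep both matrix operations.

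I do not anticipate a serious obstacle; the only subtlety worth spelling out carefully is the interface with Assumption \ref{assp2}'s requirement that $\mathcal{H} \neq \varnothing$. The cleanest way to handle this is to split into the case where the determinant is retained (so $\mathcal{H} = \{|\ast|\} \neq \varnothing$ and Theorem \ref{big} applies verbatim) and the case where it is not (so $\mathcal{H} = \varnothing$, and one cites the $\mathcal{H}$-empty result of \cite{teohteh} quoted just before Corollary 3.1, which yields the same equivalence with ``$\mathcal{A}_1$ is a Ramsey algebra'' for $\vec{e}\in\Omega_1$). In both cases the classification of $\mathcal{A}_1$ above is identical, so the statement follows uniformly. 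A secondary point is to confirm that $n$ plays no role: the diagonal embedding argument works for every $n \in \mathbb{N}$ (including $n=1$, where the matrix ring \emph{is} $\mathbb{F}$), so the exceptional reducts are exactly the same for every order $n$.
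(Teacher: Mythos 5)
Your proposal is correct and follows essentially the same route as the paper: invoke Theorem \ref{big} (or the $\mathcal{H}=\varnothing$ result of \cite{teohteh} when the determinant is dropped), note that $(\mathcal{M}_n(\mathbb{F}),+)$ and $(\mathcal{M}_n(\mathbb{F}),\times)$ are semigroups and hence Ramsey algebras, and rule out the reducts keeping both matrix operations via the diagonal embedding of $\mathbb{F}$ together with Theorem \ref{tehring} and Proposition \ref{subalg}. The paper's proof is just a terser version of yours; your extra care about the $\mathcal{H}=\varnothing$ case and the $\mathcal{G}_1=\varnothing$ case is sound but not a different argument.
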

\begin{proof}
This is because $(\mathcal{M}_n({\mathbb{F}}), +)$ and $(\mathcal{M}_n({\mathbb{F}}), \times)$ are Ramsey algebras (because they are semigroups) and, since $\mathbb{F}$ is embedded in $(\mathcal{M}_n({\mathbb{F}}), +, \times)$, it is not a Ramsey algebra by Theorem \ref{tehring}.
\end{proof}

Thus, we should now focus on the situations when $\vec{e}\in\Omega\setminus\Omega_1$ as well as when $\mathcal{H}$ is nonempty. Henceforth, we make the following assumption:

\begin{assp}
$\mathcal{H}$ is assumed to be a singleton and its sole member will be denoted by $h$ henceforth.
\end{assp}

\begin{thm}\label{firsttheorem}
Suppose that $\vec{e}\in\Omega_0$. If $\mathcal{G}_0=\varnothing$ and $\mathcal{A}_1$ is a Ramsey algebra, then $\mathcal{A}$ is an $\vec{e}$-Ramsey algebra.
\end{thm}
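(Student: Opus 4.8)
The plan is to push the whole problem down to the homogeneous algebra $\mathcal{A}_1$, after first pinning down what the orderly terms of $\mathcal{F}$ can look like. I would begin by splitting on whether $\vec{e}$ attains the value $1$. If it does not, then $\vec{e}\in\Omega_0$ forces $\vec{e}$ to be constantly $0$, so $\vec{b}$ is a sequence of $A_0$; since $\mathcal{G}_0=\varnothing$ and there is nothing in $A_1$ on which to evaluate $h$, the only orderly term over $\mathcal{F}$ that can be applied to a subsequence of $\vec{b}$ and land in $A_0$ is $\id_{A_0}$, so by Eq.~\ref{Omegafr} the set $\fr_{\mathcal{F}}^{\vec{e}}(\vec{b})$ is just the set of entries of $\vec{b}$, and any infinite monochromatic subsequence (which exists by the pigeonhole principle, and is a reduction of $\vec{b}$) is homogeneous for $X$. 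So from now on assume $\vec{e}$ attains $1$; since $\vec{e}\in\Omega$ it does so infinitely often, and hence the subsequence $\vec{\beta}$ of $\vec{b}$ consisting of its $A_1$-entries is an infinite sequence of $A_1$.

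The key preliminary is a dichotomy for orderly terms, proved by a routine induction on the stages $\mathcal{F}_k$: because $\mathcal{G}_0=\varnothing$ and $\mathcal{H}=\{h\}$, every orderly term over $\mathcal{F}$ with codomain $A_1$ already lies in $\ot(\mathcal{G}_1)$, and every orderly term over $\mathcal{F}$ with codomain $A_0$ is either $\id_{A_0}$ or of the form $h\circ t$ with $t\in\ot(\mathcal{G}_1)$. Combining this with Eq.~\ref{Omegafr} and Eq.~\ref{FRdefHomAlg}, one obtains the following bound, valid for any $\vec{e}$-sorted sequence $\vec{c}$ whose $A_1$-subsequence is $\vec{\gamma}$:
\[
\fr_{\mathcal{F}}^{\vec{e}}(\vec{c})\ \subseteq\ \{\vec{c}(i):\vec{c}(i)\in A_0\}\ \cup\ h\bigl[\fr_{\mathcal{G}_1}(\vec{\gamma})\bigr].
\]

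For the construction, I would apply the hypothesis that $\mathcal{A}_1$ is a Ramsey algebra to the sequence $\vec{\beta}$ and the set $\{v\in A_1:h(v)\in X\}$, obtaining an $\vec{\alpha}\leq_{\mathcal{G}_1}\vec{\beta}$ with $h\bigl[\fr_{\mathcal{G}_1}(\vec{\alpha})\bigr]$ either contained in, or disjoint from, $X$. Writing $\vec{\alpha}(j)=t_j(\vec{\beta}_j)$ as in Definition~\ref{reduction}, I would then define $\vec{a}$ by $\vec{a}(j)=\vec{\alpha}(j)$ when $\vec{e}(j)=1$ and $\vec{a}(j)=h(\vec{\alpha}(j))$ when $\vec{e}(j)=0$, in each case witnessed by the block $\vec{\beta}_j$ together with the orderly term $t_j$, respectively $h\circ t_j$. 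Then $\vec{a}$ is $\vec{e}$-sorted, and $\vec{a}\leq_{\mathcal{F}}\vec{b}$ since $\vec{\beta}_0\ast\vec{\beta}_1\ast\cdots$ is a subsequence of $\vec{\beta}$, hence of $\vec{b}$. The $A_0$-entries of $\vec{a}$ are $h$-images of entries of $\vec{\alpha}$, and the $A_1$-subsequence of $\vec{a}$ is a subsequence of $\vec{\alpha}$; so the displayed inclusion gives $\fr_{\mathcal{F}}^{\vec{e}}(\vec{a})\subseteq h\bigl[\fr_{\mathcal{G}_1}(\vec{\alpha})\bigr]$, and therefore $\fr_{\mathcal{F}}^{\vec{e}}(\vec{a})$ is contained in $X$ or in $A_0\setminus X$ according to the case, as required.

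The part needing the most care is the bookkeeping in the last step — checking that $h\circ t_j$ is genuinely an orderly term over $\mathcal{F}$ and that interleaving the two kinds of entries still yields a legitimate reduction of $\vec{b}$ with its blocks occurring in the correct order — together with the inductive proof of the orderly-term dichotomy, which is exactly where the hypothesis $\mathcal{G}_0=\varnothing$ enters. It is also worth flagging that the nonemptiness of $\mathcal{H}$ is essential here: the map $h$ is precisely what lets the $A_0$-coordinates of $\vec{a}$ be filled in a manner controlled by the single colour class selected inside $A_1$.
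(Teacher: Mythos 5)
Your proof is correct and follows essentially the same route as the paper's: apply the Ramsey property of $\mathcal{A}_1$ to $h^{-1}[X]$, then interleave $\vec{\alpha}$ with its $h$-images to obtain the homogeneous $\vec{e}$-sorted reduction $\vec{a}\leq_\mathcal{F}\vec{b}$. You are in fact somewhat more careful than the paper, both in proving the orderly-term dichotomy (which the paper leaves implicit in its claim that $\fr^{\vec{e}}_\mathcal{F}(\vec{a})=\{h(\alpha):\alpha\in\fr_{\mathcal{G}_1}(\vec{\alpha})\}$) and in treating the constant sort $\vec{e}\equiv 0$ separately, where $\vec{\beta}$ is empty and the paper's argument does not literally apply.
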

\begin{proof}
Given $X\subseteq A_0$ and any $\vec{e}$-sorted sequence $\vec{b}$, let $\vec{\beta}$ be the subsequence of $\vec{b}$ whose terms are members of $A_1$. By the hypothesis that $(A_1, \mathcal{G}_1)$ is a Ramsey algebra, let $\vec{\alpha}\leq_{\mathcal{G}_1}\vec{\beta}$ be homogeneous for $h^{-1}[X]$.

Using $\vec{\alpha}$, we define the $\vec{e}$-sorted sequence $\vec{a}$ as follows:
\begin{displaymath}
   \vec{a}(i) = \left\{
     \begin{array}{lr}
       \vec{\alpha}(i) & \text{if}\; \vec{e}(i)=1, \\
       h(\vec{\alpha}(i)) & \text{otherwise}.
     \end{array}
   \right.
\end{displaymath}

By way it is defined, we see that $\vec{a}\leq_\mathcal{F}\vec{\alpha}$. Observe also that $\vec{a}\leq_\mathcal{F}\vec{b}$ by the transitivity on the chain of reductions above. Further, $\vec{a}$ is $\vec{e}$-sorted and $\fr^{\vec{e}}_\mathcal{F}(\vec{a})=\{h(\alpha):\alpha\in\fr_{\mathcal{G}_1}(\vec{\alpha})\}$ by the choice of $\vec{a}$ and by Eqv. \ref{Omegafr}. Since $\fr_{\mathcal{G}_1}(\vec{\alpha})\subseteq h^{-1}[X]$ or $\fr_{\mathcal{G}_1}(\vec{\alpha})\subseteq A_1\setminus h^{-1}[X]$, it follows that $\fr^{\vec{e}}_\mathcal{F}(\vec{a})\subseteq X$ or $\fr^{\vec{e}}_\mathcal{F}(\vec{a})\subseteq A_0\setminus X$, respectively, thus proving that $\mathcal{A}$ is an $\vec{e}$-Ramsey algebra.
\end{proof}

\begin{cor}\label{results}
For each $\vec{e}\in\Omega_0$ and $n\in\omega$, the algebras $(\mathcal{M}_n(\mathbb{F}), \mathbb{F}, |\ast|)$, $(\mathcal{M}_n(\mathbb{F}), \mathbb{F}, +, |\ast|)$, and $(\mathcal{M}_n(\mathbb{F}), \mathbb{F}, \times, |\ast|)$ are $\vec{e}$-Ramsey algebras.
\end{cor}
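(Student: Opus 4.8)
The plan is to recognize each of the three algebras as an instance of the general setup of Assumption~\ref{assp2} (with $\mathcal{H}$ a singleton) and then invoke Theorem~\ref{firsttheorem}. Concretely, I would fix $n$ and set $A_0=\mathbb{F}$ with index $0$ and $A_1=\mathcal{M}_n(\mathbb{F})$ with index $1$, exactly as the running indexing convention dictates. The determinant $|\ast|\colon\mathcal{M}_n(\mathbb{F})\to\mathbb{F}$ is a unary operation from $A_1$ into $A_0$, so in each of the three algebras we have $\mathcal{H}=\{|\ast|\}$, a nonempty singleton; and in each case the only operations besides the determinant act on the matrix phylum, so $\mathcal{G}_0=\varnothing$. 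Thus the hypotheses of Theorem~\ref{firsttheorem} concerning $\vec{e}$ and $\mathcal{G}_0$ are met for every $\vec{e}\in\Omega_0$, and the only thing left to verify is that the reduct $\mathcal{A}_1=(\mathcal{M}_n(\mathbb{F}),\mathcal{G}_1)$ is a Ramsey algebra in each of the three cases.

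For $(\mathcal{M}_n(\mathbb{F}),\mathbb{F},|\ast|)$ we have $\mathcal{G}_1=\varnothing$. I would observe that the operationless algebra $(\mathcal{M}_n(\mathbb{F}),\varnothing)$ is a Ramsey algebra: here $\leq_{\varnothing}$ is just the subsequence relation, so given $X\subseteq\mathcal{M}_n(\mathbb{F})$ and an infinite sequence $\vec{\beta}$, one of $X$ or $\mathcal{M}_n(\mathbb{F})\setminus X$ contains infinitely many terms of $\vec{\beta}$, and the corresponding subsequence $\vec{\alpha}$ satisfies $\fr_{\varnothing}(\vec{\alpha})=\{\vec{\alpha}(i):i\in\omega\}$, which is contained in or disjoint from $X$. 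For $(\mathcal{M}_n(\mathbb{F}),\mathbb{F},+,|\ast|)$ and $(\mathcal{M}_n(\mathbb{F}),\mathbb{F},\times,|\ast|)$ we have $\mathcal{G}_1=\{+\}$ and $\mathcal{G}_1=\{\times\}$ respectively; both $(\mathcal{M}_n(\mathbb{F}),+)$ and $(\mathcal{M}_n(\mathbb{F}),\times)$ are semigroups, hence Ramsey algebras by Theorem~\ref{hindman}. In all three cases Theorem~\ref{firsttheorem} then yields that the algebra is an $\vec{e}$-Ramsey algebra for every $\vec{e}\in\Omega_0$.

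There is essentially no obstacle here; the corollary is a bookkeeping exercise matching the three matrix algebras to the template of Theorem~\ref{firsttheorem}. The only point that needs a word of care is the degenerate subcase $\mathcal{G}_1=\varnothing$, where the semigroup argument behind Theorem~\ref{hindman} does not literally apply and one instead uses the trivial pigeonhole argument above to see that an algebra with no operations is Ramsey. (If one wishes to cover $n=0$, as the index set $\omega$ suggests, the matrix algebra collapses to a one-element phylum and the statement is vacuous, so no extra argument is needed.)
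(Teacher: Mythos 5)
Your proof is correct and is exactly the argument the paper intends: the paper's one-line proof (``apply the preceding corollary,'' evidently a slip for Theorem~\ref{firsttheorem}) is precisely your reduction to checking $\mathcal{G}_0=\varnothing$ and that $\mathcal{A}_1$ is a Ramsey algebra, with the $+$ and $\times$ cases handled by the semigroup theorem. Your explicit pigeonhole treatment of the operationless case $\mathcal{G}_1=\varnothing$ is a detail the paper leaves implicit, but it is the right (and needed) observation.
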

\begin{proof}
Apply the preceding corollary.
\end{proof}

The determinant operation is a homomorphism from the set of matrices equipped with matrix multiplication to the multiplicative group of the underlying field. In general, the notion of a homomorphism can be defined between two algebras of the same signature. Two homogeneous algebras $\mathcal{A}_0$ and $\mathcal{A}_1$ are said to have the same signature if there exists a one-to-one correspondence between $\mathcal{G}_0$ and $\mathcal{G}_1$ such that, if $F\in\mathcal{G}_1$ is an $n$-ary operation, then the corresponding operation $f\in\mathcal{G}_0$ is also $n$-ary. Now, if $\mathcal{G}_0$ and $\mathcal{G}_1$ share the same signature, then an $h:A_1\to A_0$ is a \emph{homomorphism} from $\mathcal{A}_1$ into $\mathcal{A}_0$ if for each corresponding $n$-ary operation $F\in\mathcal{G}_1$ and $f\in\mathcal{G}_0$, and for all $(a_1, \ldots, a_n)$ in the domain of $f$,
\begin{equation}
h(F(a_1, \ldots, a_n))=f(h(a_1), \ldots, h(a_n)).
\end{equation}

\begin{lem}\label{ac}
Suppose that $\vec{e}\in\Omega_0$ and $h$ is a homomorphism from $\mathcal{A}_1$ into $\mathcal{A}_0$. If $\vec{\alpha}\in{^\omega}A_1$ and the $\vec{e}$-sorted sequence $\vec{a}$ are related by
\begin{displaymath}
   \vec{a}(i) = \left\{
     \begin{array}{lr}
       \vec{\alpha}(i) & \text{if}\; \vec{e}(i)=1, \\
       h(\vec{\alpha}(i)) & \text{otherwise},
     \end{array}
   \right.
\end{displaymath}
then, for each $N$-ary $f\in\ot(\mathcal{F})$ having codomain $A_0$ and each $n_1<\cdots<n_N$, there exists an $N$-ary $F\in\ot(\mathcal{G}_1)$ such that $f(\vec{a}(n_1), \ldots, \vec{a}(n_N))=h(F(\vec{\alpha}(n_1), \ldots, \vec{\alpha}(n_N)))$. In addition, $\fr_\mathcal{F}^{\vec{e}}(\vec{a})=\{h(c):c\in\fr_{\mathcal{G}_1}(\vec{\alpha})\}$.
\end{lem}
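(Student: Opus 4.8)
The plan is to prove the two assertions of Lemma \ref{ac} in sequence, the first by induction on the orderly-term structure and the second as a fairly quick consequence of the first together with Eq. \ref{Omegafr}.

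For the first assertion, I would proceed by induction on the least $k$ such that $f\in\mathcal{F}_k$ (in the sense of the construction of $\ot(\mathcal{F})$ preceding the definition of orderly terms). The base case is $f\in\mathcal{F}_0$, i.e. $f$ is either an operation of $\mathcal{F}$ or an identity map $\id_{A_\xi}$; since $f$ has codomain $A_0$, Assumption \ref{assp2} forces $f$ to be either an operation in $\mathcal{G}_0$, or the heterogeneous map $h$, or $\id_{A_0}$. If $f=h$ (the $N=1$ case), then $f(\vec{a}(n_1))=h(\vec{\alpha}(n_1))$ directly, taking $F=\id_{A_1}$. If $f=\id_{A_0}$, then since $\vec{a}(n_1)$ must then lie in $A_0$ we have $\vec{e}(n_1)=0$, so $\vec{a}(n_1)=h(\vec{\alpha}(n_1))$ and again $F=\id_{A_1}$ works. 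If $f\in\mathcal{G}_0$ is $N$-ary, then its arguments $\vec{a}(n_1),\dots,\vec{a}(n_N)$ all lie in $A_0$, hence each equals $h(\vec{\alpha}(n_j))$, and the homomorphism hypothesis gives $f(h(\vec{\alpha}(n_1)),\dots,h(\vec{\alpha}(n_N)))=h(F(\vec{\alpha}(n_1),\dots,\vec{\alpha}(n_N)))$ where $F\in\mathcal{G}_1$ is the operation corresponding to $f$ under the signature bijection. For the inductive step, write $f(\vec{x})=g(h_1(\vec{x}_1),\dots,h_M(\vec{x}_M))$ with $g\in\mathcal{F}$ $M$-ary, $h_1,\dots,h_M\in\mathcal{F}_{k-1}$, and $\vec{x}_1\ast\cdots\ast\vec{x}_M=\vec{x}$. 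Since $f$ has codomain $A_0$, so does $g$; by Assumption \ref{assp2} again, $g$ is either in $\mathcal{G}_0$ or is $h$ itself. If $g=h$ then $M=1$, $h_1$ has codomain $A_1$ (since $h$'s domain is $A_1$), so by Remark on orderly terms with codomain $A_1$ the subterm $h_1$ lies in $\ot(\mathcal{G}_1)$; feeding in the arguments $\vec{a}(n_j)$ — which, having to lie in $A_1$, equal $\vec{\alpha}(n_j)$ — we get $f(\vec{a}(n_1),\dots)=h(h_1(\vec{\alpha}(n_1),\dots))$ and take $F=h_1$. If $g\in\mathcal{G}_0$, then each $h_\ell$ has codomain $A_0$, so the induction hypothesis applies to each $h_\ell$ (split the index block $n_1<\cdots<n_N$ according to the arities of $h_1,\dots,h_M$) yielding $G_\ell\in\ot(\mathcal{G}_1)$ with $h_\ell(\vec{a}\text{-block}_\ell)=h(G_\ell(\vec{\alpha}\text{-block}_\ell))$; then $f(\vec{a}(n_1),\dots)=g(h(G_1(\dots)),\dots,h(G_M(\dots)))=h(G(G_1(\dots),\dots,G_M(\dots)))$ by the homomorphism property applied to $g$ and its partner $G\in\mathcal{G}_1$, and $F:=G(G_1,\dots,G_M)$ is the required orderly term over $\mathcal{G}_1$ (it is orderly precisely because the index blocks were taken in increasing order, matching the concatenation condition in the definition of $\ot$).

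For the second assertion, I would use Eq. \ref{Omegafr}: since $\vec{e}\in\Omega_0$ with $\vec{e}(0)=0$, an element $c$ lies in $\fr_\mathcal{F}^{\vec{e}}(\vec{b})$, with $\vec{b}=\vec{a}$ here, iff $c=f(\tau)$ for some $f\in\ot(\mathcal{F})$ with codomain $A_0$ and some finite increasing subsequence $\tau=(\vec{a}(n_1),\dots,\vec{a}(n_N))$ of $\vec{a}$. The forward inclusion $\fr_\mathcal{F}^{\vec{e}}(\vec{a})\subseteq\{h(c):c\in\fr_{\mathcal{G}_1}(\vec{\alpha})\}$ is then immediate from the first part: $f(\tau)=h(F(\vec{\alpha}(n_1),\dots,\vec{\alpha}(n_N)))$ and $F(\vec{\alpha}(n_1),\dots,\vec{\alpha}(n_N))\in\fr_{\mathcal{G}_1}(\vec{\alpha})$ by Eq. \ref{FRdefHomAlg}. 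For the reverse inclusion, given $c\in\fr_{\mathcal{G}_1}(\vec{\alpha})$, write $c=F(\vec{\alpha}(n_1),\dots,\vec{\alpha}(n_N))$ with $F\in\ot(\mathcal{G}_1)$; then $f:=h\circ F$ is an orderly term over $\mathcal{F}$ with codomain $A_0$ (composition with the unary $h\in\mathcal{H}\subseteq\mathcal{F}$ is an orderly composition), and one checks $f(\vec{a}(n_1),\dots,\vec{a}(n_N)) = h(F(\vec{\alpha}(n_1),\dots,\vec{\alpha}(n_N)))$ — here one needs the arguments $\vec{a}(n_j)$ to relate correctly to $\vec{\alpha}(n_j)$; this holds because $F$ sees its arguments only through $h$, and whether $\vec{e}(n_j)=1$ (so $\vec{a}(n_j)=\vec{\alpha}(n_j)$) or $\vec{e}(n_j)=0$ (so $\vec{a}(n_j)=h(\vec{\alpha}(n_j))$), the value $h(F(\dots))$ is unchanged provided we are careful — actually the cleanest route is to note $h\circ F$ applied to $\vec{a}$-entries equals $h$ of $F$ applied to $\vec{\alpha}$-entries by the first assertion applied to $f=h\circ F$ itself, giving $F$ back up to the homomorphism. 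So $h(c)=f(\tau)\in\fr_\mathcal{F}^{\vec{e}}(\vec{a})$.

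The main obstacle I anticipate is bookkeeping in the inductive step of the first assertion: correctly partitioning the increasing index tuple $n_1<\cdots<n_N$ across the subterms $h_1,\dots,h_M$ so that the resulting $G_1,\dots,G_M$, when assembled into $G(G_1,\dots,G_M)$, genuinely satisfy the ``orderly'' concatenation condition $\vec{x}_1\ast\cdots\ast\vec{x}_M=\vec{x}$ of the definition of $\ot(\mathcal{G}_1)$ — this is where the hypothesis that the indices are increasing is essential, and it must be tracked through the recursion rather than waved away. A secondary subtlety is making sure that whenever an orderly term has codomain $A_0$, every one of its arguments of phylum $A_0$ arises (given the shape of $\mathcal{A}$ in Assumption \ref{assp2}) only via applications of $h$, so that the homomorphism equation is actually applicable; this is exactly the structural restriction that Assumption \ref{assp2} was designed to guarantee, and invoking it at each node of the term tree is what makes the argument go through.
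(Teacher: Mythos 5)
Your induction for the first assertion is essentially the paper's proof: induct on the stage at which $f$ enters $\ot(\mathcal{F})$, use Assumption \ref{assp2} to see that the outermost operation of a term with codomain $A_0$ is in $\mathcal{G}_0\cup\{h\}\cup\{\id_{A_0}\}$, and push $h$ outward via the homomorphism identity at each node. You are in fact slightly more careful than the paper in the case where the outer operation is $h$ itself, where you correctly invoke the observation (from the proof of Theorem \ref{big}) that an orderly term over $\mathcal{F}$ with codomain $A_1$ lies in $\ot(\mathcal{G}_1)$ and takes all its arguments from $A_1$; the paper dismisses this case as ``immediate.'' The bookkeeping of the increasing index blocks is handled the same way in both arguments.

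The genuine gap is in the reverse inclusion $\{h(c):c\in\fr_{\mathcal{G}_1}(\vec{\alpha})\}\subseteq\fr_\mathcal{F}^{\vec{e}}(\vec{a})$. Given $c=F(\vec{\alpha}(n_1),\ldots,\vec{\alpha}(n_N))$ with $F\in\ot(\mathcal{G}_1)$, your candidate term $f=h\circ F$ has domain $A_1^N$, so it simply cannot be applied to $(\vec{a}(n_1),\ldots,\vec{a}(n_N))$ when $\vec{e}(n_j)=0$ for some $j$, since then $\vec{a}(n_j)=h(\vec{\alpha}(n_j))\in A_0$; and because $h$ need not be injective you cannot recover $\vec{\alpha}(n_j)$ from $\vec{a}(n_j)$. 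Your fallback --- ``apply the first assertion to $f=h\circ F$'' --- is circular and fails for the same reason: the first assertion presupposes that $(\vec{a}(n_1),\ldots,\vec{a}(n_N))$ lies in $\dom(f)$. The repair is to route the computation through $A_0$ rather than $A_1$: let $F'\in\ot(\mathcal{G}_0)$ be the term corresponding to $F$ under the signature bijection, so that $h(F(\beta_1,\ldots,\beta_N))=F'(h(\beta_1),\ldots,h(\beta_N))$ by (an inductive extension of) the homomorphism property, and define $f$ by precomposing the $j$th slot of $F'$ with $h$ when $\vec{e}(n_j)=1$ and with $\id_{A_0}$ when $\vec{e}(n_j)=0$. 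This $f$ is an orderly term over $\mathcal{F}$ with $f(\vec{a}(n_1),\ldots,\vec{a}(n_N))=F'(h(\vec{\alpha}(n_1)),\ldots,h(\vec{\alpha}(n_N)))=h(c)$, which gives the inclusion. (The paper offers no argument here at all --- ``can now be deduced easily'' --- so this is a point on which your write-up should do better, not worse.)
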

\begin{proof}
We prove the lemma by induction on the generation of $f$. For the base case, we consider $f\in\mathcal{G}_0\cup\{h\}\cup\{\id_{A_0}\}$. If $f=\id_{A_0}$, then
\begin{equation*}
f(\vec{a}(i))=\id_{A_0}(h(\vec{\alpha}(i)))=h(\id_{A_1}(\vec{\alpha}(i)))
\end{equation*}
which clearly shows that $f(\vec{a}(i))$ is in the stipulated form. For $f=h$, the proof is similar. Thus, suppose now that $f$ is an $N$-ary operation belonging in $\mathcal{G}_0$ and $n_1<\cdots< n_N$. Let $F$ be the corresponding operation in $\mathcal{G}_1$ under the homomorphism $h$. Then
\begin{eqnarray*}
f(\vec{a}(n_1), \ldots, \vec{a}(n_N)) &=& f(h(\vec{\alpha}(n_1)), \ldots, h(\vec{\alpha}(n_N))) \\
&=& h\left(F(\vec{\alpha}(n_1), \ldots, \vec{\alpha}(n_N))\right),
\end{eqnarray*}
which again is in the stipulated form. The last base case to consider is when $f=h$, but this is immediate.

Next, for the inductive step, suppose that $f$ is such that $f(\vec{a}(n_1), \ldots, \vec{a}(n_N))=G(G_1(\tau_1)$, $\ldots$, $G_N(\tau_N))$, where $G\in\mathcal{G}_0\cup\{h\}\cup\{\id_{A_0}\}$ and $\tau_1\ast\cdots\ast\tau_N=\langle\vec{a}(n_1), \ldots, \vec{a}(n_N)\rangle$. For each finite subsequence $\tau=\langle\vec{a}(m_1), \ldots, \vec{a}(m_M)\rangle$ of $\vec{a}$, let $\tilde{\tau}$ denote the finite subsequence $\langle\vec{\alpha}(m_1)$, $\ldots$, $\vec{\alpha}(m_M)\rangle$ of $\vec{\alpha}$. We omit the case when $f=h$ as the proof is immediate. Thus, by induction hypothesis, let $F_1, \ldots, F_N\in\ot(\mathcal{G}_1)$ be such that $G_i(\tau_i)=h(F_i(\tilde{\tau}_i))$ for each $i\in\{1, \ldots, N\}$. Then, denoting by $G'\in\mathcal{G}_1$ the operation corresponding to $G$ under the homomorphism $h$, we now have
\begin{eqnarray*}
f(\vec{a}(n_1), \ldots, \vec{a}(n_N)) &=& G(G_1(\tau_1), \ldots, G_N(\tau_N)) \\
&=& G(h(F_1(\tilde{\tau}_1)), \ldots, h(F_N(\tilde{\tau}_N))) \\
&=& h(G'(F_1(\tilde{\tau}_1), \ldots, F_N(\tilde{\tau}_N))).
\end{eqnarray*}
Since $G', F_1, \ldots, F_N\in\ot(\mathcal{G}_1)$ and $\sigma_1\ast\cdots\ast\sigma_N=\langle\vec{\alpha}(n_1), \ldots, \vec{\alpha}(n_N)\rangle$, it follows that $f(\vec{a}(n_1)$, $\ldots$, $\vec{a}(n_N))$ can be expressed in the stipulated form. This completes the induction proof of the first conclusion of the lemma.

The other conclusion of the lemma can now be deduced easily.
\end{proof}

\begin{thm}\label{heterohomomorphism}
Suppose that $\mathcal{A}_1$ is a Ramsey algebra and $h$ is a homomorphism. Then, $\mathcal{A}$ is an $\vec{e}$-Ramsey algebra for each $\vec{e}\in\Omega_0$.
\end{thm}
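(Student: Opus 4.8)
The plan is to imitate the proof of Theorem \ref{firsttheorem} almost verbatim, the one change being that the computation of $\fr^{\vec{e}}_{\mathcal{F}}(\vec{a})$ — which there rested on $\mathcal{G}_0$ being empty — is now handed to us by Lemma \ref{ac}, whose homomorphism hypothesis takes over the role formerly played by $\mathcal{G}_0=\varnothing$. So I would fix $\vec{e}\in\Omega_0$, a set $X\subseteq A_0$, and an $\vec{e}$-sorted sequence $\vec{b}$, and let $\vec{\beta}$ be the subsequence of $\vec{b}$ whose terms lie in $A_1$. Since $\mathcal{A}_1$ is a Ramsey algebra, I would choose $\vec{\alpha}\leq_{\mathcal{G}_1}\vec{\beta}$ homogeneous for $h^{-1}[X]$, so that $\fr_{\mathcal{G}_1}(\vec{\alpha})\subseteq h^{-1}[X]$ or $\fr_{\mathcal{G}_1}(\vec{\alpha})\subseteq A_1\setminus h^{-1}[X]$.

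Next I would define the $\vec{e}$-sorted sequence $\vec{a}$ from $\vec{\alpha}$ by the recipe of Lemma \ref{ac}: set $\vec{a}(i)=\vec{\alpha}(i)$ when $\vec{e}(i)=1$ and $\vec{a}(i)=h(\vec{\alpha}(i))$ when $\vec{e}(i)=0$. By construction $\vec{a}$ is $\vec{e}$-sorted. Moreover $\vec{a}\leq_{\mathcal{F}}\vec{\alpha}$, since each $\vec{a}(i)$ is obtained from the single term $\vec{\alpha}(i)$ by applying either $\id_{A_1}$ or $h$, both of which lie in $\ot(\mathcal{F})$; and since $\vec{\alpha}\leq_{\mathcal{G}_1}\vec{\beta}$ gives $\vec{\alpha}\leq_{\mathcal{F}}\vec{\beta}$ while $\vec{\beta}$ is a subsequence of $\vec{b}$, transitivity of $\leq_{\mathcal{F}}$ yields the chain $\vec{a}\leq_{\mathcal{F}}\vec{\alpha}\leq_{\mathcal{F}}\vec{\beta}\leq_{\mathcal{F}}\vec{b}$, so $\vec{a}$ is an admissible witness. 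Then I would invoke Lemma \ref{ac} on the pair $(\vec{\alpha},\vec{a})$ — its hypotheses, that $h$ is a homomorphism and that $\vec{a}$ arises from $\vec{\alpha}$ by exactly this recipe, hold — to get $\fr^{\vec{e}}_{\mathcal{F}}(\vec{a})=\{h(c):c\in\fr_{\mathcal{G}_1}(\vec{\alpha})\}$. If $\fr_{\mathcal{G}_1}(\vec{\alpha})\subseteq h^{-1}[X]$ then every such $h(c)$ lies in $X$, so $\fr^{\vec{e}}_{\mathcal{F}}(\vec{a})\subseteq X$; if instead $\fr_{\mathcal{G}_1}(\vec{\alpha})\subseteq A_1\setminus h^{-1}[X]$ then every $h(c)$ lies in $A_0\setminus X$, so $\fr^{\vec{e}}_{\mathcal{F}}(\vec{a})\subseteq A_0\setminus X$. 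Either way $\vec{a}$ is homogeneous for $X$, and as $X$ and $\vec{b}$ were arbitrary, $\mathcal{A}$ is an $\vec{e}$-Ramsey algebra for this (hence, being arbitrary, every) $\vec{e}\in\Omega_0$.

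I do not expect a genuine obstacle here, because the only step carrying weight has already been discharged in Lemma \ref{ac}: that is precisely where one checks, by induction on the generation of an orderly term $f\in\ot(\mathcal{F})$ with codomain $A_0$, that evaluating $f$ on a finite subsequence of $\vec{a}$ returns $h$ of the value of a matching orderly term over $\mathcal{G}_1$ on the corresponding subsequence of $\vec{\alpha}$, and it is in that inductive step, and only there, that the homomorphism property is used (to push $h$ past the top operation $G'$). What remains in the argument above is bookkeeping — assembling $\vec{a}$, verifying the reduction chain, and transporting homogeneity for $h^{-1}[X]$ across $h$ to homogeneity for $X$ — so the main task is simply to cite Lemma \ref{ac} correctly and to note that, since every orderly term over $\mathcal{F}$ of output sort $1$ has all inputs of sort $1$ (Assumption \ref{assp2}), the sequence $\vec{\alpha}$ furnished by the Ramsey property of $\mathcal{A}_1$ really does control all of $\fr^{\vec{e}}_{\mathcal{F}}(\vec{a})$ via $h$.
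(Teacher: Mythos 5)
Your proposal is correct and follows essentially the same route as the paper's own proof: pass to the $A_1$-subsequence $\vec{\beta}$, apply the Ramsey property of $\mathcal{A}_1$ to $h^{-1}[X]$, build $\vec{a}$ from $\vec{\alpha}$ by interleaving $h$-images, and invoke Lemma \ref{ac} to transport homogeneity across $h$. Your explicit verification of the chain $\vec{a}\leq_{\mathcal{F}}\vec{\alpha}\leq_{\mathcal{F}}\vec{\beta}\leq_{\mathcal{F}}\vec{b}$ is, if anything, slightly more careful than the paper's.
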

\begin{proof}
Suppose that $\vec{e}\in\Omega_0$, $\vec{b}$ is an $\vec{e}$-sorted sequence, and $X\subseteq A_0$. Let $\vec{\beta}$ be the subsequence of $\vec{b}$ whose terms are members of $A_1$. As such, let $\vec{\alpha}\leq_{\mathcal{G}_1}\vec{\beta}$ be homogeneous for $h^{-1}[X]$. Take note that the relation $\vec{\alpha}\leq_\mathcal{F}\vec{\beta}\leq_\mathcal{F}\vec{b}$ holds by the transitivity of $\leq_\mathcal{F}$ and the fact that $\mathcal{G}_1\subseteq\mathcal{F}$.

We now define an $\vec{e}$-sorted sequence $\vec{a}$ by letting
\begin{displaymath}
   \vec{a}(i) = \left\{
     \begin{array}{lr}
       \vec{\alpha}(i) & \text{if}\; \vec{e}(i)=1, \\
       h(\vec{\alpha}(i)) & \text{otherwise}
     \end{array}
   \right.
\end{displaymath}
and we note that $\vec{a}\leq_\mathcal{F}\vec{\alpha}$, hence $\vec{a}\leq_\mathcal{F}\vec{b}$ by the transitivity of $\leq_\mathcal{F}$. We may now apply Lemma \ref{ac}. Namely, each member of $\fr_\mathcal{F}^{\vec{e}}(\vec{a})$ is the image of some $c\in\fr_{\mathcal{G}_1}(\vec{\alpha})$ under $h$ since $h$ is a homomorphism. From this, we conclude that $\fr_\mathcal{F}^{\vec{e}}(\vec{a})\subseteq X$ or $\fr_\mathcal{F}^{\vec{e}}(\vec{a})\subseteq A_0\setminus X$ depending respectively on whether $\fr_{\mathcal{G}_1}(\vec{\alpha})\subseteq h^{-1}[X]$ or $\fr_{\mathcal{G}_1}(\vec{\alpha})\subseteq A_1\setminus h^{-1}[X]$. This is a statement about the homogeneity of $\vec{a}$ for $X$, hence $\mathcal{A}$ is an $\vec{e}$-Ramsey algebra.
\end{proof}

%
%
%

\begin{cor}
Suppose $\mathcal{A}_1$ is a Ramsey algebra and $\mathcal{H}$ is a singleton whose member is a homomorphism. Then, $\mathcal{A}$ is an $\vec{e}$-Ramsey algebra for all $\vec{e}\in\Omega_0$.
\end{cor}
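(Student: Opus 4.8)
The plan is to derive this corollary directly from Theorem \ref{heterohomomorphism} by verifying that its hypotheses are met. The corollary adds essentially nothing new: it merely restates Theorem \ref{heterohomomorphism} under the standing Assumption that $\mathcal{H}$ is a singleton (so that ``$h$ is a homomorphism'' in the theorem is literally the statement that the sole member of $\mathcal{H}$ is a homomorphism). Thus the proof is a single line: apply Theorem \ref{heterohomomorphism}.

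More explicitly, I would argue as follows. By the standing Assumption, $\mathcal{A} = (A_0, A_1, \mathcal{F})$ has $\mathcal{F} = \mathcal{G}_0 \cup \mathcal{G}_1 \cup \mathcal{H}$ with $\mathcal{H} = \{h\}$. The hypothesis of the corollary gives us that $\mathcal{A}_1 = (A_1, \mathcal{G}_1)$ is a Ramsey algebra and that $h$ is a homomorphism (necessarily from $\mathcal{A}_1$ into $\mathcal{A}_0$, since $\mathcal{H}$ consists of unary maps $A_1 \to A_0$ and the notion of homomorphism here is precisely with respect to the shared signature of $\mathcal{A}_1$ and $\mathcal{A}_0$). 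These are exactly the two hypotheses of Theorem \ref{heterohomomorphism}, so that theorem yields that $\mathcal{A}$ is an $\vec{e}$-Ramsey algebra for every $\vec{e} \in \Omega_0$, which is the desired conclusion.

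There is essentially no obstacle here; the only thing worth flagging is the bookkeeping point that ``$h$ is a homomorphism'' presupposes that $\mathcal{G}_0$ and $\mathcal{G}_1$ share a signature, i.e. that there is a bijective arity-preserving correspondence between them, which is implicitly part of what is being assumed. I would make that explicit in one phrase and then invoke the theorem. So the write-up is:

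\begin{proof}
By Assumption, $\mathcal{H}=\{h\}$ and, as $h$ is assumed to be a homomorphism, the families $\mathcal{G}_0$ and $\mathcal{G}_1$ share a common signature and $h$ is a homomorphism from $\mathcal{A}_1$ into $\mathcal{A}_0$ with respect to it. Together with the hypothesis that $\mathcal{A}_1$ is a Ramsey algebra, the hypotheses of Theorem \ref{heterohomomorphism} are satisfied, and the conclusion follows at once.
\end{proof}
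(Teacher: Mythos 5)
Your proposal is correct and matches the paper's treatment: the corollary is stated immediately after Theorem \ref{heterohomomorphism} with no separate proof, precisely because under the standing assumption that $\mathcal{H}=\{h\}$ it is a direct restatement of that theorem. Your explicit remark about the shared signature of $\mathcal{G}_0$ and $\mathcal{G}_1$ being implicit in the phrase ``$h$ is a homomorphism'' is a reasonable bookkeeping clarification but adds nothing essential.
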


\begin{cor}\label{simplemult}
For each $\vec{e}\in\Omega_0$ and $n\in\omega$, $(\mathcal{M}_n(\mathbb{F}), \mathbb{F}, \times_{\mathbb{F}}, \times, |\ast|)$ is an $\vec{e}$-Ramsey algebra.
\end{cor}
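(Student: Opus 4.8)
The plan is to read off this algebra as a special case of the framework set up in Assumption \ref{assp2} and then invoke the corollary following Theorem \ref{heterohomomorphism}. Here $A_1=\mathcal{M}_n(\mathbb{F})$ with $\mathcal{G}_1=\{\times\}$, $A_0=\mathbb{F}$ with $\mathcal{G}_0=\{\times_{\mathbb{F}}\}$, and $\mathcal{H}=\{|\ast|\}$, where the determinant $|\ast|$ is viewed as a unary operation from $A_1$ into $A_0$. With this identification $\mathcal{A}_1=(\mathcal{M}_n(\mathbb{F}),\times)$ and $\mathcal{A}_0=(\mathbb{F},\times_{\mathbb{F}})$ in the notation of that assumption, so all that remains is to check the two hypotheses of the cited corollary.

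First I would note that $\mathcal{A}_1=(\mathcal{M}_n(\mathbb{F}),\times)$ is a semigroup, since matrix multiplication is associative, and hence is a Ramsey algebra by Theorem \ref{hindman}. Second, I would verify that $|\ast|$ is a homomorphism from $\mathcal{A}_1$ into $\mathcal{A}_0$ in the precise sense given just before Lemma \ref{ac}: the signatures agree, as $\mathcal{G}_1$ and $\mathcal{G}_0$ each consist of a single binary operation, and we pair $\times$ with $\times_{\mathbb{F}}$; the homomorphism identity $|AB|=|A|\times_{\mathbb{F}}|B|$ for all $A,B\in\mathcal{M}_n(\mathbb{F})$ is then exactly the multiplicativity of the determinant.

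Having confirmed that $\mathcal{A}_1$ is a Ramsey algebra and that $\mathcal{H}$ is a singleton whose member is a homomorphism, the corollary following Theorem \ref{heterohomomorphism} applies directly and yields that $\mathcal{A}=(\mathcal{M}_n(\mathbb{F}),\mathbb{F},\times_{\mathbb{F}},\times,|\ast|)$ is an $\vec{e}$-Ramsey algebra for every $\vec{e}\in\Omega_0$. There is no real obstacle here, since both ingredients are classical and immediate; the only point deserving a moment's care is making sure the determinant literally satisfies the formal definition of a homomorphism between these single-operation algebras, namely that the pairing of operations respects arity and that $|AB|=|A|\times_{\mathbb{F}}|B|$ is the required identity, after which the result is a one-line appeal to the machinery already in place.
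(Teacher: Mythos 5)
Your proposal is correct and follows exactly the paper's own route: the paper likewise notes that $(\mathcal{M}_n(\mathbb{F}),\times)$ is a Ramsey algebra (being a semigroup) and that the determinant is a homomorphism, and then applies the corollary to Theorem \ref{heterohomomorphism}. Your additional care in checking the signature-matching and the identity $|AB|=|A|\times_{\mathbb{F}}|B|$ is a fine elaboration of the same one-line argument.
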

\begin{proof}
The determinant operator $|\ast|$ is a homomorphism and $(\mathcal{M}_n, \times)$ is a Ramsey algebra.
\end{proof}

\begin{thm}
Suppose that $\mathcal{A}_1$ is not a Ramsey algebra as witnessed by $\vec{\beta}$ and $X_1$ and that $h$ is \emph{one-to-one} on $\fr_{\mathcal{G}_1}(\vec{\beta})$. Then, $\mathcal{A}$ is not an $\vec{e}$-Ramsey algebra for all nonconstant $\vec{e}\in\Omega_0$.
\end{thm}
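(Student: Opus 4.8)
The plan is to exhibit a single pair $(X,\vec{b})$ witnessing that $\mathcal{A}$ is not an $\vec{e}$-Ramsey algebra, by arranging that every purported homogeneous reduction of $\vec{b}$ descends to a homogeneous reduction of $\vec{\beta}$ for $X_1$, contrary to the hypothesis on $\mathcal{A}_1$. I would take $X=h[X_1]\subseteq A_0$ and let $\vec{b}$ be any $\vec{e}$-sorted sequence whose subsequence of $A_1$-valued terms is exactly $\vec{\beta}$; this is possible because $\vec{e}\in\Omega_0$ is nonconstant, so (as the index set is $\{0,1\}$) the value $1$ occurs infinitely often while $0$ occurs at least at index $0$, and the $A_0$-valued positions may be filled arbitrarily since $A_0\neq\varnothing$. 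Before this, one harmless normalization: since $\fr_{\mathcal{G}_1}(\vec{\alpha})\subseteq\fr_{\mathcal{G}_1}(\vec{\beta})$ whenever $\vec{\alpha}\leq_{\mathcal{G}_1}\vec{\beta}$, any reduction of $\vec{\beta}$ homogeneous for $X_1\cap\fr_{\mathcal{G}_1}(\vec{\beta})$ is automatically homogeneous for $X_1$, so the pair $(\vec{\beta},\,X_1\cap\fr_{\mathcal{G}_1}(\vec{\beta}))$ also witnesses that $\mathcal{A}_1$ is not a Ramsey algebra while leaving the injectivity hypothesis on $h$ untouched; hence I will assume $X_1\subseteq\fr_{\mathcal{G}_1}(\vec{\beta})$ from the outset.

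Suppose toward a contradiction that $\mathcal{A}$ is $\vec{e}$-Ramsey, fix an $\vec{e}$-sorted $\vec{a}\leq_\mathcal{F}\vec{b}$ homogeneous for $X$, and let $\vec{\alpha}$ be the (infinite) subsequence of $\vec{a}$ consisting of its $A_1$-valued terms. Two things then need checking. First, $\vec{\alpha}\leq_{\mathcal{G}_1}\vec{\beta}$: as observed in the proof of Theorem \ref{big}, any orderly term over $\mathcal{F}$ with codomain $A_1$ is in fact an orderly term over $\mathcal{G}_1$ whose domain is a Cartesian power of $A_1$; applying this to the orderly terms witnessing $\vec{a}\leq_\mathcal{F}\vec{b}$ at the $A_1$-positions of $\vec{e}$ shows that the corresponding blocks of $\vec{b}$ are subsequences of $\vec{\beta}$, which yields the reduction. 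Second, $h[\fr_{\mathcal{G}_1}(\vec{\alpha})]\subseteq\fr^{\vec{e}}_\mathcal{F}(\vec{a})$: if $c=F(\sigma)$ with $F\in\ot(\mathcal{G}_1)$ and $\sigma$ a finite subsequence of $\vec{\alpha}$ (hence of $\vec{a}$), then $h\circ F$ is again an orderly term over $\mathcal{F}$, being the orderly composition of the unary $h\in\mathcal{F}$ with the single term $F\in\ot(\mathcal{G}_1)\subseteq\ot(\mathcal{F})$, and $h(c)=(h\circ F)(\sigma)$, so $h(c)\in\fr^{\vec{e}}_\mathcal{F}(\vec{a})$ by Eq.~\ref{Omegafr} (applicable as $\vec{e}\in\Omega$).

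The contradiction then follows by a dichotomy on the homogeneity of $\vec{a}$. If $\fr^{\vec{e}}_\mathcal{F}(\vec{a})$ is disjoint from $X=h[X_1]$, then $\fr_{\mathcal{G}_1}(\vec{\alpha})$ must be disjoint from $X_1$, since any common element $c$ would place $h(c)$ in both $\fr^{\vec{e}}_\mathcal{F}(\vec{a})$ and $h[X_1]$. If instead $\fr^{\vec{e}}_\mathcal{F}(\vec{a})\subseteq h[X_1]$, then for each $c\in\fr_{\mathcal{G}_1}(\vec{\alpha})$ we have $h(c)=h(c')$ for some $c'\in X_1$; since $c\in\fr_{\mathcal{G}_1}(\vec{\alpha})\subseteq\fr_{\mathcal{G}_1}(\vec{\beta})$ and, by the normalization, $c'\in X_1\subseteq\fr_{\mathcal{G}_1}(\vec{\beta})$, the injectivity of $h$ on $\fr_{\mathcal{G}_1}(\vec{\beta})$ gives $c=c'\in X_1$, so $\fr_{\mathcal{G}_1}(\vec{\alpha})\subseteq X_1$. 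In either case $\vec{\alpha}\leq_{\mathcal{G}_1}\vec{\beta}$ is homogeneous for $X_1$, contradicting the choice of $\vec{\beta}$ and $X_1$.

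I expect the only genuinely delicate point to be the ``$\fr^{\vec{e}}_\mathcal{F}(\vec{a})\subseteq h[X_1]$'' branch: this is where the injectivity of $h$ is actually used, and it is precisely the reason the preliminary normalization $X_1\subseteq\fr_{\mathcal{G}_1}(\vec{\beta})$ must be carried out first, so that both $c$ and its preimage-mate $c'$ lie in the set on which $h$ is one-to-one. Everything else is bookkeeping: the structure of orderly terms with codomain $A_1$ (already isolated in Theorem \ref{big}), the stability of $\ot$ under enlarging the operation family, and the characterization in Eq.~\ref{Omegafr} of $\fr^{\vec{e}}_\mathcal{F}$ for sorts in $\Omega$.
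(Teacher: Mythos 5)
Your proposal is correct and follows essentially the same route as the paper: both build $\vec{b}$ with $\vec{\beta}$ at the $A_1$-positions, take the partition set $h[X_1\cap\fr_{\mathcal{G}_1}(\vec{\beta})]$ (your normalization makes explicit what the paper's choice of $h(X\cap\fr_{\mathcal{G}_1}(\vec{\beta}))$ does implicitly), and transfer non-homogeneity from $\fr_{\mathcal{G}_1}(\vec{\alpha})$ to $\fr^{\vec{e}}_\mathcal{F}(\vec{a})$ via the injectivity of $h$ on $\fr_{\mathcal{G}_1}(\vec{\beta})$. Your write-up merely supplies details the paper leaves implicit (the inclusion $h[\fr_{\mathcal{G}_1}(\vec{\alpha})]\subseteq\fr^{\vec{e}}_\mathcal{F}(\vec{a})$ and the two-case dichotomy), and your filling of the $A_0$-positions of $\vec{b}$ arbitrarily rather than by $h(\vec{\beta}(i))$ is an immaterial variation.
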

\begin{proof}
Let $\vec{e}\in\Omega_0$ be nonconstant. Begin by defining an $\vec{e}$-sorted sequence $\vec{b}$ as follows:
\begin{displaymath}
   \vec{b}(i) = \left\{
     \begin{array}{lr}
       \vec{\beta}(i) & \text{if}\; \vec{e}(i)=1, \\
       h(\vec{\beta}(i)) & \text{otherwise}.
     \end{array}
   \right.
\end{displaymath}
Observe that, for every $\vec{e}$-sorted $\vec{a}\leq_\mathcal{F}\vec{b}$, the subsequence $\vec{\alpha}$ consisting of terms belonging in $A_1$ is such that $\vec{\alpha}\leq_{\mathcal{G}_1}\vec{\beta}$ and, therefore, $\fr_{\mathcal{G}_1}(\vec{\alpha})\not\subseteq X$ and $\fr_{\mathcal{G}_1}(\vec{\alpha})\not\subseteq A_1\setminus X$ by hypothesis. Consequently, since $\vec{e}$ is not constant, we see that $\fr_\mathcal{F}^{\vec{e}}(\vec{a})$ is such that $\fr_\mathcal{F}^{\vec{e}}(\vec{a})\not\subseteq h(X\cap\fr_{\mathcal{G}_1}(\vec{\beta}))$ and $\fr_\mathcal{F}^{\vec{e}}\not\subseteq A_0\setminus h(X\cap\fr_{\mathcal{G}_1}(\vec{\beta}))$ owing to the fact that $h$ is a one-to-one function. Hence, $\mathcal{A}$ is not an $\vec{e}$-Ramsey algebra.
\end{proof}

Note that the theorem above requires that $\vec{e}$ being nonconstant. If $\vec{e}$ is constant, then the conclusion depends solely on whether $\mathcal{A}_0$ is a Ramsey algebra or not.

\begin{cor}
Neither $(\mathcal{M}_n, \mathbb{F}, +, \times, |\ast|)$, $(\mathcal{M}_n, \mathbb{F}, +, \times, +_\mathbb{F}, |\ast|)$, $(\mathcal{M}_n, \mathbb{F}, +, \times, \times_\mathbb{F}, |\ast|)$, nor  the full matrix algebra is an $\vec{e}$-Ramsey algebra for any nonconstant $\vec{e}\in\Omega_0$.
\end{cor}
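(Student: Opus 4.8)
The plan is to read the corollary off from the theorem immediately preceding it. In every one of the four listed algebras the phylum $A_1$ is $\mathcal{M}_n(\mathbb{F})$ equipped with the two matrix operations and nothing else, so $\mathcal{A}_1 = (\mathcal{M}_n(\mathbb{F}), +, \times)$; the unique heterogeneous operation $h$ is the determinant $|\ast|$; and $\mathcal{H} = \{|\ast|\}$ is a singleton, so the running hypotheses of the section (in particular Assumption \ref{assp2} and the assumption that $\mathcal{H}$ is a singleton) are in force. (These four reducts are precisely the ones retaining $|\ast|$ together with both matrix operations; the choice of which of $+_\mathbb{F}, \times_\mathbb{F}$ to keep affects only $\mathcal{G}_0$, which does not enter the argument.) Now $\mathcal{A}_1$ is an infinite ring of characteristic $0$ with identity, hence not a Ramsey algebra by Theorem \ref{tehring}. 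So, by the theorem just above, it suffices to exhibit a witness $\vec{\beta}, X_1$ of the non-Ramseyness of $\mathcal{A}_1$ for which $|\ast|$ is one-to-one on $\fr_{\{+,\times\}}(\vec{\beta})$; the theorem then yields that each of the four algebras fails to be an $\vec{e}$-Ramsey algebra for every nonconstant $\vec{e} \in \Omega_0$, which is exactly the claim.

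To produce such a witness I would pass through the scalar embedding $\phi \colon \mathbb{F} \to \mathcal{M}_n(\mathbb{F})$, $\phi(r) = \diag(r, \ldots, r)$, an injective ring homomorphism onto a subalgebra of $(\mathcal{M}_n(\mathbb{F}), +, \times)$. Fix a witness $\langle c_i\rangle, Y$ to the fact that $(\mathbb{F}, +_\mathbb{F}, \times_\mathbb{F})$ is not a Ramsey algebra, chosen so that every $c_i$ is a positive integer, and put $\vec{\beta} = \langle \phi(c_i)\rangle$ and $X_1 = \phi[Y]$. Since the formal structure of an orderly term depends only on arities, orderly terms over the matrix $\{+, \times\}$ evaluated at scalar matrices correspond under $\phi$ to orderly terms over $\{+_\mathbb{F}, \times_\mathbb{F}\}$ evaluated at the $c_i$; hence every reduction of $\vec{\beta}$ is $\phi$ applied to a reduction of $\langle c_i\rangle$, and $\fr_{\{+,\times\}}(\vec{\beta}) = \phi\!\left[\fr_{\{+_\mathbb{F}, \times_\mathbb{F}\}}(\langle c_i\rangle)\right]$. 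Injectivity of $\phi$ transfers the witnessing property from $\langle c_i\rangle, Y$ to $\vec{\beta}, X_1$. Moreover $\fr_{\{+_\mathbb{F}, \times_\mathbb{F}\}}(\langle c_i\rangle)$ consists of positive integers (sums and products of positives are positive), so each element of $\fr_{\{+,\times\}}(\vec{\beta})$ has the form $\diag(s, \ldots, s)$ with $s$ a positive integer, and $|\diag(s, \ldots, s)| = s^n$. Since $s \mapsto s^n$ is strictly increasing, hence injective, on the positive reals, $|\ast|$ is one-to-one on $\fr_{\{+,\times\}}(\vec{\beta})$, as required; the preceding theorem then applies verbatim.

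The one step requiring genuine care — and the only real obstacle — is the choice of a \emph{positive} witness above. This is what forces us to stay off the fibers of $s \mapsto s^n$: a mixed-sign witness would, for even $n$ (or, for fields such as $\mathbb{C}$, for any $n \ge 2$), make $\diag(s,\ldots,s)$ and $\diag(\zeta s, \ldots, \zeta s)$ with $\zeta^n = 1$ collide under the determinant, violating the injectivity hypothesis. I would dispatch this by checking that the construction underlying Theorem \ref{tehring} for a characteristic-$0$ ring with identity — applied to the copy of $\mathbb{Z}$ sitting inside $\mathbb{F}$ — can be carried out entirely within the positive integers; equivalently, that the sub-semiring $(\mathbb{Z}^{+}, +, \times)$ (or $(\mathbb{Q}^{+}, +, \times)$) is itself not a Ramsey algebra, which the same rapidly-growing-sequence-plus-coloring argument delivers without ever invoking negative elements. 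Granting this routine point, the corollary follows.
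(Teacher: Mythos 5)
Your proposal is correct and follows essentially the same route as the paper's own (sketched) proof: take the bad sequence for $\mathcal{A}_1=(\mathcal{M}_n(\mathbb{F}),+,\times)$ to consist of the scalar matrices $\diag(c_i,\ldots,c_i)$ where $\langle c_i\rangle$ is a positive, integral witness to the non-Ramseyness of $(\mathbb{F},+_\mathbb{F},\times_\mathbb{F})$, observe that $|\ast|$ acts on the resulting forward reductions as $s\mapsto s^n$ on positive integers and is therefore injective, and then invoke the immediately preceding theorem. Like the paper, you leave unproved the claim that such a witness can be chosen positive and integral, but you flag this explicitly, exactly as the paper does.
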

\begin{proof}
(Sketch.) Note that matrix ring has zero divisors, so we cannot apply Theorem \ref{tehring} directly. However, since our field $\mathbb{F}$ of interest are of characteristic $0$, we may take as bad sequence $\vec{b}$ the sequence of diagonal matrices whose diagonal elements are terms of the bad sequence $\vec{\beta}$ witnessing $\mathbb{F}$ not being a Ramsey algebra. For any infinite ring with identity of characteristic zero, an isomorphic copy of the integers is embedded within. The bad sequence witnessing its failure of being a Ramsey algebra can then be taken to be integral and positive. (We will not prove this fact.) Hence $|\ast|$ will be one-to-one on $\fr_{\mathcal{G}_1}(\vec{\beta})$ and the conclusion follows from the theorem above.
\end{proof}

\begin{lem}\label{long}
Suppose that $h$ is a homomorphism, $\vec{\beta}\in{^\omega}\im(h)$, $\vec{e}\in\Omega_0$, and $\vec{b}$ is $\vec{e}$-sorted such that $\vec{\beta}(i)=h(\vec{b}(i))$ if $\vec{e}(i)=1$ and $\vec{b}(i)=\vec{\beta}(i)$ otherwise.
Then we have:
\begin{enumerate}
\item If $f$ is an $N$-ary member of $\ot(\mathcal{F})$ having codomain $A_0$, then for each $n_1<\cdots<n_N$, there exists $F'\in\ot\left(\mathcal{G}_0\right)$ such that $f\left(\vec{b}(n_1), \ldots, \vec{b}(n_N)\right)=F'\left(\vec{\beta}(n_1), \ldots, \vec{\beta}(n_N)\right)$.
\item If $\vec{u}\leq_{\mathcal{F}}\vec{b}$
and $\vec{u}\in{^\omega}A_0$, then $\vec{u}\leq_{\mathcal{G}_0}\vec{\beta}$.
\end{enumerate}
\end{lem}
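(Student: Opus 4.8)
The plan is to establish (1) by induction on the generation of $f$ (i.e.\ on the least $k$ with $f\in\mathcal{F}_k$), closely paralleling the proof of Lemma~\ref{ac}, and then to read off (2) as an immediate consequence. The key observation underpinning everything is the following ``slot identity'': since $\vec{b}$ is $\vec{e}$-sorted and the phyla are pairwise disjoint, the very fact that $f(\vec{b}(n_1),\ldots,\vec{b}(n_N))$ is well-defined forces $\vec{e}(n_i)$ to be the index of the $i$-th argument phylum of $f$; hence at an argument slot of type $A_0$ one has $\vec{b}(n_i)=\vec{\beta}(n_i)$, while at a slot of type $A_1$ one has $\vec{\beta}(n_i)=h(\vec{b}(n_i))$. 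These are exactly the two rewriting rules the induction will feed on.

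Before the induction I would record two auxiliary facts. First, as already noted in the proof of Theorem~\ref{big}, any orderly term over $\mathcal{F}$ whose codomain is $A_1$ is in fact an orderly term over $\mathcal{G}_1$ and has a Cartesian power of $A_1$ for its domain; this is a routine induction on generation, the point being that the only generators landing in $A_1$ are those in $\mathcal{G}_1$ together with $\id_{A_1}$, all of which accept only $A_1$-inputs. Second, a homomorphism commutes with orderly terms: for each $N$-ary $F\in\ot(\mathcal{G}_1)$ there is an $N$-ary $F'\in\ot(\mathcal{G}_0)$ with $h(F(a_1,\ldots,a_N))=F'(h(a_1),\ldots,h(a_N))$ for all inputs; this is proved by the same straightforward induction as in Lemma~\ref{ac} (the identity term maps to the identity term, a generator in $\mathcal{G}_1$ maps to its partner in $\mathcal{G}_0$, and an orderly composition is handled by applying the hypothesis to each immediate subterm and composing).

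For the induction on $f$, the base case is $f\in\mathcal{G}_0\cup\{h\}\cup\{\id_{A_0}\}$: if $f\in\mathcal{G}_0$ then every input lies in $A_0$, so $\vec{b}(n_i)=\vec{\beta}(n_i)$ and $F'=f$ works; if $f=h$ or $f=\id_{A_0}$ then $f(\vec{b}(n_1))=\vec{\beta}(n_1)$ by the slot identity, so $F'=\id_{A_0}$ works. For the inductive step write $f(\vec{x})=g(g_1(\vec{x}_1),\ldots,g_M(\vec{x}_M))$ with $g\in\mathcal{F}$ and $g_1,\ldots,g_M$ of strictly lower generation, and note $g$ has codomain $A_0$, so $g\in\mathcal{G}_0\cup\{h\}$. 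If $g\in\mathcal{G}_0$, each $g_i$ has codomain $A_0$; the induction hypothesis gives $F'_i\in\ot(\mathcal{G}_0)$ computing $g_i$ on the corresponding block of $\vec{\beta}$, and $F'(\vec{y})=g(F'_1(\vec{y}_1),\ldots,F'_M(\vec{y}_M))$ is the desired orderly term over $\mathcal{G}_0$. If $g=h$, then $M=1$ and $g_1$ has codomain $A_1$, so by the first auxiliary fact $g_1\in\ot(\mathcal{G}_1)$ and consumes only $A_1$-inputs; thus $\vec{\beta}(n_i)=h(\vec{b}(n_i))$ at every relevant index, and the second auxiliary fact lets us pull $h$ past $g_1$ to obtain $f(\vec{b}(n_1),\ldots,\vec{b}(n_N))=h\bigl(g_1(\vec{b}(n_1),\ldots,\vec{b}(n_N))\bigr)=F'(\vec{\beta}(n_1),\ldots,\vec{\beta}(n_N))$ for a suitable $F'\in\ot(\mathcal{G}_0)$. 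This completes the induction. For (2), given $\vec{u}\leq_{\mathcal{F}}\vec{b}$ with $\vec{u}\in{}^\omega A_0$, take the terms $f_j\in\ot(\mathcal{F})$ and subsequences $\vec{b}_j$ of $\vec{b}$ witnessing the reduction; each $f_j$ has codomain $A_0$, so (1) replaces $\vec{u}(j)=f_j(\vec{b}_j)$ by $F'_j(\vec{\beta}_j)$, where $\vec{\beta}_j$ is the subsequence of $\vec{\beta}$ on the same index set as $\vec{b}_j$ and $F'_j\in\ot(\mathcal{G}_0)$. Since the index blocks of $\vec{b}_0\ast\vec{b}_1\ast\cdots$ increase along $\vec{b}$, those of $\vec{\beta}_0\ast\vec{\beta}_1\ast\cdots$ increase along $\vec{\beta}$, so the latter is a subsequence of $\vec{\beta}$ and $\vec{u}\leq_{\mathcal{G}_0}\vec{\beta}$.

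The step I expect to be the main obstacle is the $g=h$ case of the inductive step: one must recognize that passing through the heterogeneous operation $h$ switches the ambient phylum, invoke the structural fact that the subterm sitting below $h$ is entirely over $\mathcal{G}_1$ (and hence only ever consumes $A_1$-entries of $\vec{b}$), and then transport it back into $A_0$ via the homomorphism property — all while keeping straight the bookkeeping of which entries of $\vec{b}$ (equivalently of $\vec{\beta}$) go into which block $\vec{x}_i$ of the composition. The remaining cases are bookkeeping.
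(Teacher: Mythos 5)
Your proof is correct, but it is organized differently from the paper's. The paper disposes of part (1) in a few lines by a lifting trick: since $\vec{\beta}\in{^\omega}\im(h)$, it chooses preimages to produce an $\vec{\alpha}\in{^\omega}A_1$ standing in the relation of Lemma~\ref{ac} to $\vec{b}$, invokes Lemma~\ref{ac} to write $f(\vec{b}(n_1),\ldots,\vec{b}(n_N))=h(F(\vec{\alpha}(n_1),\ldots,\vec{\alpha}(n_N)))$ with $F\in\ot(\mathcal{G}_1)$, and then pushes $h$ through $F$ via the homomorphism property to land on $F'(\vec{\beta}(n_1),\ldots,\vec{\beta}(n_N))$ with $F'\in\ot(\mathcal{G}_0)$ --- exactly your second auxiliary fact. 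You instead redo the induction on the generation of $f$ from scratch, splitting the outer generator into the $\mathcal{G}_0$ case and the $h$ case and using the structural fact (from the proof of Theorem~\ref{big}) that an orderly term with codomain $A_1$ lives entirely over $\mathcal{G}_1$. The underlying content is the same two inductions, so neither route is deeper than the other; the paper's is shorter because it recycles Lemma~\ref{ac}, while yours has the modest advantages of never using the hypothesis $\vec{\beta}\in{^\omega}\im(h)$ (which the paper needs only to perform the lift, and which silently involves choosing preimages) and of making explicit the ``homomorphisms commute with orderly terms'' step that the paper uses without comment. Your treatment of part (2), tracking the index blocks of the witnessing subsequences, is the ``follows easily from (1)'' that the paper omits, and it is correct.
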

\begin{proof}
(2) follows easily from (1), so we will only justify (1). Since $\vec{\beta}\in{^\omega}\im(h)$, we can find an $\vec{\alpha}\in{^\omega}A_1$ such that
\begin{displaymath}
   \vec{b}(i) = \left\{
     \begin{array}{lr}
       \vec{\alpha}(i) & \text{if}\; \vec{e}(i)=1, \\
       h(\vec{\alpha}(i)) & \text{otherwise}.
     \end{array}
   \right.
\end{displaymath}
We may thus apply Lemma \ref{ac} to obtain $F\in\ot(\mathcal{G}_1)$ such that $f\left(\vec{b}(n_1), \ldots, \vec{b}(n_N)\right)=h(F(\vec{\alpha}(n_1), \ldots, \vec{\alpha}(n_N))$. Letting $F'\in\ot(\mathcal{G}_0)$ denote the corresponding operation of $F$ under the homomorphism $h$, we then have
\begin{eqnarray*}
f\left(\vec{b}(n_1), \ldots, \vec{b}(n_N)\right) &=& h(F(\vec{\alpha}(n_1), \ldots, \vec{\alpha}(n_N)) \\
&=& F'(h(\vec{\alpha}(n_1)), \ldots, h(\vec{\alpha}(n_N))) \\
&=& F'(\vec{\beta}(n_1), \ldots, \vec{\beta}(n_N))
\end{eqnarray*}
as desired.
\end{proof}

\begin{thm}\label{3.5}
Suppose that $h$ is a homomorphism and suppose that $\mathcal{A}_0$ is not a Ramsey algebra. If there exists a sequence $\vec{\beta}\in{^\omega}\im(h)$ and a set $X\subseteq A_0$ witnessing the failure of $\mathcal{A}_0$ being a Ramsey algebra, then $\mathcal{A}$ is not an $\vec{e}$-Ramsey algebra for all $\vec{e}\in\Omega_0$ .
\end{thm}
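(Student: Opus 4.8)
The plan is a proof by contradiction that transports a hypothetical homogeneous reduction of $\vec{b}$ inside $\mathcal{A}$ down to a homogeneous reduction of $\vec{\beta}$ inside $\mathcal{A}_0$, contradicting the choice of the pair $\vec{\beta}, X$. Fix an arbitrary $\vec{e}\in\Omega_0$. Since $\vec{\beta}\in{^\omega}\im(h)$, for each $i$ with $\vec{e}(i)=1$ I may choose some $\vec{b}(i)\in A_1$ with $h(\vec{b}(i))=\vec{\beta}(i)$, and I set $\vec{b}(i)=\vec{\beta}(i)$ whenever $\vec{e}(i)=0$. Then $\vec{b}$ is $\vec{e}$-sorted and stands in exactly the relation to $\vec{\beta}$ demanded by the hypotheses of Lemma \ref{long}.

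Now suppose toward a contradiction that $\mathcal{A}$ is an $\vec{e}$-Ramsey algebra. Applying the $\vec{e}$-Ramsey property to $\vec{b}$ and $X$ yields an $\vec{e}$-sorted $\vec{a}\leq_\mathcal{F}\vec{b}$ with $\fr_\mathcal{F}^{\vec{e}}(\vec{a})\subseteq X$ or $\fr_\mathcal{F}^{\vec{e}}(\vec{a})\subseteq A_0\setminus X$. Let $\vec{a}_0$ be the subsequence of $\vec{a}$ consisting of those terms that lie in $A_0$; because $\vec{e}\in\Omega_0$ the value $0$ occurs infinitely often in $\vec{e}$, so $\vec{a}_0\in{^\omega}A_0$. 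Being a subsequence of $\vec{a}$, it satisfies $\vec{a}_0\leq_\mathcal{F}\vec{a}\leq_\mathcal{F}\vec{b}$, and Lemma \ref{long}(2) then gives $\vec{a}_0\leq_{\mathcal{G}_0}\vec{\beta}$.

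It remains to note the inclusion $\fr_{\mathcal{G}_0}(\vec{a}_0)\subseteq\fr_\mathcal{F}^{\vec{e}}(\vec{a})$: any $c\in\fr_{\mathcal{G}_0}(\vec{a}_0)$ is of the form $F'(\sigma)$ for some $F'\in\ot(\mathcal{G}_0)\subseteq\ot(\mathcal{F})$ and some finite subsequence $\sigma$ of $\vec{a}_0$, hence of $\vec{a}$; since $F'$ has codomain $A_0=A_{\vec{e}(0)}$, Eq.~\ref{Omegafr} places $c$ in $\fr_\mathcal{F}^{\vec{e}}(\vec{a})$. Combining the two inclusions, $\fr_{\mathcal{G}_0}(\vec{a}_0)$ is contained in $X$ or in $A_0\setminus X$, while $\vec{a}_0\leq_{\mathcal{G}_0}\vec{\beta}$; this contradicts the assumption that $\vec{\beta}$ and $X$ witness the failure of $\mathcal{A}_0$ being a Ramsey algebra, i.e.\ that no $\leq_{\mathcal{G}_0}$-reduction of $\vec{\beta}$ is homogeneous for $X$. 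Hence $\mathcal{A}$ is not an $\vec{e}$-Ramsey algebra.

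The genuine content is already packaged in Lemma \ref{long} (which rests on Lemma \ref{ac}): the homomorphic re-expression of orderly terms over $\mathcal{F}$ with codomain $A_0$ as orderly terms over $\mathcal{G}_0$ read off in the $\vec{\beta}$-coordinates. Given that, the only points needing a little care are that $\vec{b}$ genuinely matches the Lemma \ref{long} setup — this is where $\vec{\beta}\in{^\omega}\im(h)$ is used — and that $\vec{a}_0$ is infinite, which is where $\vec{e}\in\Omega_0$, rather than merely $\vec{e}(0)=0$, is used. I do not anticipate a real obstacle; beyond invoking Lemma \ref{long}, the argument is essentially an assembly of the relevant inclusions and the $\Omega$-characterization of $\fr_\mathcal{F}^{\vec{e}}$.
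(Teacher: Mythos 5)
Your proposal is correct and follows essentially the same route as the paper's proof: construct $\vec{b}$ over $\vec{\beta}$ using $\vec{\beta}\in{^\omega}\im(h)$, pass from an $\vec{e}$-sorted reduction $\vec{a}$ of $\vec{b}$ to the $A_0$-subsequence, apply Lemma \ref{long}(2) to land in a $\leq_{\mathcal{G}_0}$-reduction of $\vec{\beta}$, and use the inclusion $\fr_{\mathcal{G}_0}(\vec{a}_0)\subseteq\fr_\mathcal{F}^{\vec{e}}(\vec{a})$ to transfer non-homogeneity. The only cosmetic difference is that you argue by contradiction where the paper directly shows every reduction fails to be homogeneous, and you spell out the inclusion via Eq.~\ref{Omegafr} which the paper leaves implicit.
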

\begin{proof}
Suppose that $\vec{\beta}$ and $X$ are as stipulated in the statement of the theorem and let $\vec{e}\in\Omega_0$ be arbitrary. Choose any sequence $\vec{b}$ as given in Lemma \ref{long} above. Now, suppose $\vec{a}\leq_\mathcal{F}\vec{b}$ is $\vec{e}$-sorted. We want to show that $\vec{a}$ is not homogeneous for $X$, i.e. $\fr_\mathcal{F}^{\vec{e}}(\vec{a})\cap X\neq\varnothing$ and $\fr_\mathcal{F}^{\vec{e}}(\vec{a})\cap(A_0\setminus X)\neq\varnothing$.

Hence, suppose $\vec{u}$ is the subsequence of $\vec{a}$ whose terms are members of $A_0$, then we know from Part 2 of Lemma \ref{long} that $\vec{u}\leq_{\mathcal{G}_0}\vec{\beta}$. By the choice of $\vec{\beta}$ and $X$, we then have $\fr_{\mathcal{G}_0}(\vec{u})\cap X\neq\varnothing$ and $\fr_{\mathcal{G}_0}(\vec{u})\cap(A_0\setminus X)\neq\varnothing$. This implies that $\fr_\mathcal{F}^{\vec{e}}(\vec{a})\cap X\neq\varnothing$ and $\fr_\mathcal{F}^{\vec{e}}(\vec{a})\cap(A_0\setminus X)\neq\varnothing$. Thus, $\vec{a}$ is not homogeneous for $X$ and so $\mathcal{A}$ is not an $\vec{e}$-Ramsey algebra.
\end{proof}

\begin{cor}
$\left(\mathcal{M}_n, \mathbb{F}, +_{\mathbb{F}}, \times_{\mathbb{F}}, |\ast|\right)$ and $\left(\mathcal{M}_n, \mathbb{F}, \times, +_{\mathbb{F}}, \times_{\mathbb{F}}, |\ast|\right)$ are not $\vec{e}$-Ramsey algebras for any $\vec{e}\in\Omega_0$.
\end{cor}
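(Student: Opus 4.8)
The plan is to run the argument behind Theorem \ref{3.5} (that is, Lemmas \ref{ac} and \ref{long}), which is precisely designed to transfer non-Ramseyness from the scalar phylum upward. In both algebras the scalar reduct is $\mathcal{A}_0 = (\mathbb{F}, +_\mathbb{F}, \times_\mathbb{F})$, an infinite ring of characteristic zero with identity, hence not a Ramsey algebra by Theorem \ref{tehring}; fix a witness $(\vec\beta, X)$ to this failure. Two observations make the transfer go through here. First, the determinant is onto $\mathbb{F}$ --- already $|\diag(r, 1, \ldots, 1)| = r$ for every $r \in \mathbb{F}$ --- so $\im(|\ast|) = \mathbb{F}$ and $\vec\beta$ automatically lies in ${}^\omega\im(|\ast|)$; in particular no integrality of $\vec\beta$ is needed. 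Second, the determinant is multiplicative, $|AB| = |A||B|$, and it is the only operation sending matrices to scalars; moreover $\mathcal{A}_1$ is $(\mathcal{M}_n, \varnothing)$ in the first algebra and $(\mathcal{M}_n, \times)$ in the second, so an orderly term with codomain $\mathcal{M}_n$ is just a left-to-right product of matrix arguments (a trivial one in the first case).

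Concretely, fix $\vec e \in \Omega_0$ and let $\vec b$ be the $\vec e$-sorted sequence with $\vec b(i) = \vec\beta(i)$ whenever $\vec e(i) = 0$ and $\vec b(i)$ any matrix of determinant $\vec\beta(i)$ whenever $\vec e(i) = 1$. Let $\vec a \leq_\mathcal{F} \vec b$ be an arbitrary $\vec e$-sorted reduction and let $\vec u$ be the subsequence of $\vec a$ consisting of the terms lying in $\mathbb{F}$; it is infinite because $\vec e \in \Omega_0$. The key claim is $\vec u \leq_{\{+_\mathbb{F}, \times_\mathbb{F}\}} \vec\beta$. Granting it, $\fr_\mathcal{F}^{\vec e}(\vec a) \supseteq \fr_{\{+_\mathbb{F}, \times_\mathbb{F}\}}(\vec u)$, and the right-hand side meets both $X$ and $\mathbb{F} \setminus X$ by the choice of $(\vec\beta, X)$; hence $\vec a$ is not homogeneous for $X$, and since $\vec a$ was arbitrary $\mathcal{A}$ is not an $\vec e$-Ramsey algebra.

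The main obstacle is the key claim, which amounts to the heterogeneous bookkeeping of Lemmas \ref{ac} and \ref{long}. I would prove it by inspecting the orderly terms of $\mathcal{F}$ with codomain $\mathbb{F}$: such a term can push its matrix arguments into $\mathbb{F}$ only through $|\ast|$, so, because an orderly term with codomain $\mathcal{M}_n$ is a product of matrix arguments, multiplicativity shows that the value of the term at entries of $\vec b$ rewrites as the value of some orderly $\{+_\mathbb{F}, \times_\mathbb{F}\}$-term at the matching entries of $\vec\beta$ --- replace each matrix $\vec b(i)$ by $\vec\beta(i) = |\vec b(i)|$ --- and orderliness survives because the matrix arguments involved occur in blocks of consecutive, strictly increasing indices. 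Carrying this out at every scalar position of $\vec a$, and checking that the resulting subsequences of $\vec\beta$ concatenate into a subsequence of $\vec\beta$ (inherited from the concatenation clause of $\vec a \leq_\mathcal{F} \vec b$ under the map $\vec b(i) \mapsto \vec\beta(i)$), yields $\vec u \leq_{\{+_\mathbb{F}, \times_\mathbb{F}\}} \vec\beta$. Everything else is formal.
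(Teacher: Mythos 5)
Your proposal is correct and follows essentially the same route as the paper: the paper's proof is simply to invoke Theorem \ref{3.5} together with the observation that the determinant is surjective onto $\mathbb{F}$ (so the witness $\vec\beta$ to the non-Ramseyness of $(\mathbb{F},+_\mathbb{F},\times_\mathbb{F})$, guaranteed by Theorem \ref{tehring}, lies in ${}^\omega\im(|\ast|)$), and your argument is just that theorem's proof unpacked for these two algebras. If anything, your direct verification of the key claim via multiplicativity of the determinant on products of matrix arguments is slightly more careful than the paper's blanket appeal to the homomorphism machinery, since here $\mathcal{G}_0$ and $\mathcal{G}_1$ do not literally share a signature.
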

\begin{proof}
This follows from Theorem \ref{3.5} and the observation that the range of the determination operation is $\mathbb{F}$.
\end{proof}

\section{More on Matrix Algebras}
In this section, we tackle the remaining matrix algebras obtainable as reducts of the full matrix algebra $(\mathcal{M}_n(\mathbb{F})$, $\mathbb{F}$, $+, \times, +_{\mathbb{F}}$, $\times_{\mathbb{F}}$, $|\ast|)$. Results in this section are of a negative nature. Note again that the case with $\vec{e}\in\Omega_1$ follows from Theorem \ref{big}.

The first algebras we will look at are $\left(\mathcal{M}_n(\mathbb{F}), \mathbb{F}, \times, +_\mathbb{F}, |\ast|\right)$ and $\left(\mathcal{M}_n(\mathbb{F}), \mathbb{F}, +_\mathbb{F}, |\ast|\right)$.

\begin{thm}
The algebra $\left(\mathcal{M}_n(\mathbb{F}), \mathbb{F}, \times, +_\mathbb{F}, |\ast|\right)$ is not an $\vec{e}$-Ramsey algebra for every nonconstant $\vec{e}\in\Omega_0$.
\end{thm}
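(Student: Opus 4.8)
The plan is to show that $\left(\mathcal{M}_n(\mathbb{F}), \mathbb{F}, \times, +_\mathbb{F}, |\ast|\right)$ fails to be $\vec{e}$-Ramsey for nonconstant $\vec{e}\in\Omega_0$ by exhibiting a bad $\vec{e}$-sorted sequence together with a bad colour set $X\subseteq\mathbb{F}$. The crucial structural observation is this: here $\mathcal{G}_0=\{+_\mathbb{F}\}$, $\mathcal{G}_1=\{\times\}$, and $h=|\ast|$ is a homomorphism from $(\mathcal{M}_n(\mathbb{F}),\times)$ into $(\mathbb{F},\times_\mathbb{F})$ — but $h$ is \emph{not} a homomorphism from $\mathcal{A}_1$ into $\mathcal{A}_0$ because the signatures do not match ($\times$ versus $+_\mathbb{F}$). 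So the earlier positive results (Theorem \ref{heterohomomorphism}) do not apply, and neither do the negative results hinging on $\vec{\beta}\in{}^\omega\im(h)$ in the cleanest way, since $\im(|\ast|)=\mathbb{F}$ but the $A_0$-operation available is addition, not multiplication. The point to exploit is that orderly terms with codomain $A_0$ can only combine, via $+_\mathbb{F}$, values that are either determinants of orderly-$\times$-products of matrix inputs, or are already-present scalar inputs; in particular every element of $\fr^{\vec e}_{\mathcal F}(\vec a)$ is a finite $+_\mathbb{F}$-sum of terms each of which is a product (in $\mathbb{F}$, under $|\ast|$) of diagonal-entry-type scalars or a determinant of a matrix product.

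First I would choose the bad data on the scalar side. Since $\mathbb{F}$ has characteristic $0$, it contains a copy of $\mathbb{N}$, and $(\mathbb{F},+_\mathbb{F})$ — being a semigroup — \emph{is} a Ramsey algebra; so we cannot simply invoke failure of $\mathcal A_0$. Instead I would build a bad sequence whose "scalar part" sits in $A_0$ via the matrix inputs: take $\vec{\beta}$ to be a sequence of matrices chosen so that $|\vec{\beta}(i)|$ grows fast (say $|\vec{\beta}(i)| = 2^{2^i}$, achievable by diagonal matrices with one entry $2^{2^i}$ and the rest $1$), and form the $\vec{e}$-sorted sequence $\vec b$ by putting $\vec{\beta}(i)$ in the vector slots and $|\vec{\beta}(i)|=h(\vec{\beta}(i))$ in the scalar slots, exactly as in the construction in the previous theorem. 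Because $\vec e$ is nonconstant, both slot types occur infinitely often (as $\vec e\in\Omega_0$), so $\fr^{\vec e}_{\mathcal F}(\vec a)$ for any $\vec e$-sorted $\vec a\le_{\mathcal F}\vec b$ must contain a scalar that "uses a matrix slot" as well as scalars built only from the scalar slots.

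Next I would analyze the shape of $\fr^{\vec e}_{\mathcal F}(\vec a)$ using an analogue of Lemma \ref{ac}/Lemma \ref{long}. The key computation: an orderly term over $\mathcal F$ with codomain $\mathbb{F}$ applied to a finite subsequence of $\vec a$ produces a value of the form $\sum_{\mathbb{F}} \prod_{\mathbb F}(\text{determinants of }\times\text{-products of }\vec{\beta}\text{-terms, plus raw scalar terms})$ — but actually, since the only $\mathcal G_0$-operation is $+_\mathbb F$ and $|\ast|$ can only be applied to matrix-valued subterms (which are $\times$-products of $\vec{\beta}$-terms), every element of $\fr^{\vec e}_{\mathcal F}(\vec a)$ is a finite sum of determinants of finite products of the matrices $\vec{\beta}(i)$, equivalently a finite sum of finite products of the numbers $|\vec{\beta}(i)|=2^{2^i}$. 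Now I would pick the bad set: let $X\subseteq\mathbb{F}$ be a set of integers designed so that neither "sums of at most one product term" nor "sums of at least two product terms" can be homogeneous — e.g., using a rapidly-growing base-like representation (the $2^{2^i}$ spacing guarantees unique representation of sums of distinct products in terms of which generators appear), let $X$ consist of those values whose representation has an even number of summands, and argue that any reduction $\vec a$ yields $\fr^{\vec e}_{\mathcal F}(\vec a)$ containing both a single-term element (a bare scalar slot value, or a single product) and a genuine two-term sum, hence meets both $X$ and its complement. The main obstacle is precisely controlling this: I must verify that for \emph{every} $\vec e$-sorted reduction $\vec a\le_{\mathcal F}\vec b$, the set $\fr^{\vec e}_{\mathcal F}(\vec a)$ necessarily contains elements of both parities — this requires showing the reduction operation cannot "collapse" all sums into a single parity class, which follows because $\vec a$ must contain infinitely many scalar-slot terms (each a bare product, odd/single) and one can also form $+_\mathbb F$ of two of them (even), while the fast growth $2^{2^i}$ prevents any accidental coincidences that would let a two-term sum equal a one-term value. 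Making the "no accidental coincidence / unique representation" argument airtight, and handling the case where $\vec a$'s scalar terms are themselves products of several $\vec\beta$-determinants rather than single ones, is where the real work lies.
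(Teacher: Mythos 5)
Your proposal is correct and follows essentially the same route as the paper: construct an $\vec{e}$-sorted sequence of diagonal matrices with prescribed determinants, observe that every matrix term of a reduction is an increasing $\times$-product (so its determinant is controlled) and that the scalar values produced by orderly terms are $+_\mathbb{F}$-sums of such determinants, then colour by a summand-counting invariant. The only real difference is the choice of invariant: the paper takes $G_i$ diagonal with $|G_i|=5i+1$, so every product has determinant $\equiv 1$ and every sum of two products is $\equiv 2 \pmod 5$, which lets it simply exhibit $|G_{i_1}\cdots G_{i_M}|$ and $|G_{i_1}\cdots G_{i_M}|+|G_{j_1}\cdots G_{j_N}|$ in each $\fr_{\mathcal{F}}^{\vec{e}}(\vec{a})$ and take $X=\{r:r\equiv 1\pmod 5\}$, thereby sidestepping entirely the uniqueness-of-binary-representation bookkeeping and the many-summand scalar terms that you flag as ``the real work'' (your version does go through, but the mod-$5$ residue is the cheaper invariant).
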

\begin{proof}
We define $G_i$ for each $i\in\omega$ to be the diagonal matrix all of whose diagonal entries are $1$ except for its upper left-hand diagonal element, which is given by $5i+1$. Given $\vec{e}$, let $\vec{b}$ be $\vec{e}$-sorted such that
\begin{displaymath}
   \vec{b}(i) = \left\{
     \begin{array}{lr}
       G_i & \text{if}\; \vec{e}(i)=1, \\
       1 & \text{otherwise}.
     \end{array}
   \right.
\end{displaymath}

If $\vec{a}\leq_\mathcal{F}\vec{b}$ is $\vec{e}$-sorted, then every matrix term of $\vec{a}$ is of the form $G_{i_1}\cdots G_{i_M}$ for some $i_1<\cdots<i_M$. Therefore, the quantities $|G_{i_1}\cdots G_{i_M}|=(5i_1+1)\cdots(5i_M+1)\equiv 1$ (mod 5) and $|G_{i_1}\cdots G_{i_M}|+|G_{j_1}\cdots G_{j_N}|=[(5i_1+1)\cdots(5i_M+1)]+[(5j_1+1)\cdots(5j_N+1)]\equiv 2$ (mod 5), where $i_1<\cdots<i_M<j_1<\cdots<j_N$, are both members of $\fr_\mathcal{F}^{\vec{e}}(\vec{a})$ by Eqv.\:\ref{Omegafr}.

Now, define $X\subseteq\mathbb{F}$ by $X=\{r\in\omega:r\equiv 1\;(\text{mod}\;5)\}$, from which we easily see that there is no reduction of $\vec{b}$ homogeneous for $X$. Thus, the desired result follows.
\end{proof}

\begin{thm}
The algebra $\vec{e}\in\Omega_0$ $\left(\mathcal{M}_n(\mathbb{F}), \mathbb{F}, +_\mathbb{F}, |\ast|\right)$ is not an $\vec{e}$-Ramsey algebra for every nonconstant $\vec{e}\in\Omega_0$.
\end{thm}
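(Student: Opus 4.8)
The plan is to follow the proof of the preceding theorem almost verbatim, keeping the same witnessing matrices and the same target set, but using the absence of matrix multiplication to cut down drastically on the matrices a reduction can produce. Reuse the diagonal matrices $G_i$ whose diagonal entries are all $1$ except the upper-left one, which is $5i+1$; then $|G_i|=5i+1\equiv 1\pmod 5$. For a fixed nonconstant $\vec{e}\in\Omega_0$, let $\vec{b}$ be $\vec{e}$-sorted with $\vec{b}(i)=G_i$ when $\vec{e}(i)=1$ and $\vec{b}(i)=1$ (the choice of scalar is immaterial) otherwise, and set $X=\{r\in\omega:r\equiv 1\pmod 5\}\subseteq\mathbb{F}$. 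Note that, $\vec{e}$ being nonconstant in $\Omega_0$, every $\vec{e}$-sorted sequence has infinitely many $A_1$-entries and infinitely many $A_0$-entries.

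The crucial point is that for this algebra $\mathcal{G}_1=\varnothing$, so neither $+_\mathbb{F}$ nor $|\ast|$ has codomain $A_1=\mathcal{M}_n(\mathbb{F})$; hence the only orderly term over $\mathcal{F}$ with codomain $A_1$ is $\id_{A_1}$ (by Remark~\ref{unaryorderlyterm} and a trivial induction on the generation of orderly terms). Consequently, for any $\vec{e}$-sorted $\vec{a}\leq_\mathcal{F}\vec{b}$, each matrix entry of $\vec{a}$ is a single term of $\vec{b}$, i.e.\ equals some $G_k$; and by condition (2) in the definition of $\leq_\mathcal{F}$ the matrix entries of $\vec{a}$, listed in order, form $G_{k_0},G_{k_1},\ldots$ with $k_0<k_1<\cdots$ (in particular there are at least two of them). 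Now invoke Eqv.~\ref{Omegafr}: taking $f=|\ast|$ and $\tau=\langle G_{k_0}\rangle$ puts $|G_{k_0}|=5k_0+1\equiv 1\pmod 5$ into $\fr_\mathcal{F}^{\vec{e}}(\vec{a})$, while taking the orderly term $f(x_1,x_2)=+_\mathbb{F}(|x_1|,|x_2|)$ and $\tau=\langle G_{k_0},G_{k_1}\rangle$ puts $|G_{k_0}|+|G_{k_1}|=5(k_0+k_1)+2\equiv 2\pmod 5$ into $\fr_\mathcal{F}^{\vec{e}}(\vec{a})$. The former lies in $X$, the latter in $\mathbb{F}\setminus X$, so $\vec{a}$ is not homogeneous for $X$; since $\vec{a}$ was an arbitrary $\vec{e}$-sorted reduction of $\vec{b}$, $(\mathcal{M}_n(\mathbb{F}),\mathbb{F},+_\mathbb{F},|\ast|)$ is not an $\vec{e}$-Ramsey algebra.

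I do not expect a genuine obstacle here; the only things needing care are bookkeeping. First, one should confirm that $f(x_1,x_2)=+_\mathbb{F}(|x_1|,|x_2|)$ really is an orderly term over $\mathcal{F}$ --- it is, since $|\ast|\in\mathcal{F}_0$ and $+_\mathbb{F}\in\mathcal{F}$, so $f\in\mathcal{F}_1\subseteq\ot(\mathcal{F})$. Second, one should spell out why a matrix entry of a reduction must be $\id_{A_1}$ applied to a single term of $\vec{b}$ and why such terms inherit the relative order they have in $\vec{b}$. Both are routine; the mathematical content of the proof is just the elementary fact that one determinant is $\equiv 1$ and a sum of two is $\equiv 2$ modulo $5$, a state of affairs no reduction can escape.
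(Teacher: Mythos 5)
Your proof is correct and follows essentially the same route as the paper, which simply reuses the $G_i$, the set $X=\{r\equiv 1\pmod 5\}$, and the mod-$5$ dichotomy from the preceding theorem, noting that without matrix multiplication the matrix terms of a reduction reduce to single $G_k$'s (the paper's ``$M,N$ can be taken to be $1$''). Your additional bookkeeping --- that $\id_{A_1}$ is the only orderly term with codomain $A_1$ when $\mathcal{G}_1=\varnothing$, and that $+_\mathbb{F}(|x_1|,|x_2|)$ is an orderly term in $\mathcal{F}_1$ --- is exactly the detail the paper leaves implicit.
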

\begin{proof}
In the absence of matrix operations, the $M, N$ above can be taken to be $1$.
\end{proof}

In the next few theorems, we will make use of the uniqueness of binary representation of the natural numbers (UBR). In most cases, we will have the opportunity to appeal to this uniqueness when arguing about the exponents as well as the bases in the quantities involved. Theorem \ref{tehring} will also play a major role here.

\begin{thm}\label{firstD_i}
The algebras $\mathcal{A}=(\mathcal{M}_n(\mathbb{F})$, $\mathbb{F}$, $+, \times_\mathbb{F}, |\ast|)$ and $\mathcal{A}'=(\mathcal{M}_n(\mathbb{F})$, $\mathbb{F}$, $+, +_{\mathbb{F}}, \times_{\mathbb{F}}, |\ast|)$are not $\vec{e}$-Ramsey algebras for every nonconstant $\vec{e}\in\Omega_0$.
\end{thm}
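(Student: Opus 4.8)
The plan is to adapt the strategy of the two immediately preceding theorems, but now the determinant values we feed in must behave badly under \emph{both} matrix addition $+$ and scalar multiplication $\times_\mathbb{F}$. The guiding idea is to choose the matrices so that taking $+$ inside $|\ast|$ interacts with taking $\times_\mathbb{F}$ outside $|\ast|$ in a way that cannot be made homogeneous. Concretely, I would pick, for each $i$, a diagonal matrix $D_i$ whose diagonal entries are $1$ except for the upper-left entry, which I would set to a power of $2$, say $2^{c_i}$ for a rapidly increasing sequence $c_0<c_1<\cdots$; then $|D_i|=2^{c_i}$. The point of using powers of $2$ is to invoke UBR: a sum $|D_{i_1}\cdots D_{i_M}| = 2^{c_{i_1}}\cdots 2^{c_{i_M}} = 2^{c_{i_1}+\cdots+c_{i_M}}$ is a single power of $2$, whereas a sum of several such products is a natural number whose binary expansion records exactly which products were summed. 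Since $+$ on matrices here only produces sums of matrices and $\times$ is absent, every matrix term of a reduction $\vec a\leq_\mathcal{F}\vec b$ is a sum $D_{i_1}+\cdots+D_{i_M}$ (for $i_1<\cdots<i_M$), so its determinant is $2^{c_{i_1}}+\cdots+2^{c_{i_M}}+(\text{lower-order correction from the off-diagonal }1\text{'s})$ — actually, since the matrices are diagonal, the sum is again diagonal with upper-left entry $2^{c_{i_1}}+\cdots+2^{c_{i_M}}$ and the other diagonal entries equal to $M$, so $|D_{i_1}+\cdots+D_{i_M}| = M^{n-1}\bigl(2^{c_{i_1}}+\cdots+2^{c_{i_M}}\bigr)$.

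Next I would define the $\vec{e}$-sorted sequence $\vec b$ by $\vec b(i)=D_i$ when $\vec e(i)=1$ and $\vec b(i)=$ some fixed scalar (say $1$) when $\vec e(i)=0$ — exactly as in the preceding proofs. Then for any $\vec{e}$-sorted $\vec a\leq_\mathcal{F}\vec b$, by Eq.~\ref{Omegafr} the set $\fr_\mathcal{F}^{\vec e}(\vec a)$ contains, among other things, scalar values of the form $|D_{i_1}+\cdots+D_{i_M}|$ and finite $\times_\mathbb{F}$-products of such values. I would then exhibit a colouring $X\subseteq\mathbb{F}$ that no such $\fr_\mathcal{F}^{\vec e}(\vec a)$ can avoid being split by. The natural candidate, mimicking the ``mod $5$'' trick but tuned to powers of $2$, is to let $X$ be the set of natural numbers whose binary representation has an \emph{even} number of $1$'s (the Thue–Morse set), or alternatively the set of perfect powers $\{2^k:k\in\omega\}$; the second is cleaner. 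With $X=\{2^k:k\in\omega\}\cup(\text{appropriate multiples})$, a single determinant $|D_i|=2^{c_i}\in X$, but a determinant of a two-term sum, $M=2$, gives $2^{n-1}(2^{c_{i_1}}+2^{c_{i_2}})$, which by UBR is not a power of $2$ once $c_{i_1}\ne c_{i_2}$; so $\fr$ already contains elements both in and out of $X$ at the matrix level. If $\vec e$ is nonconstant there is at least one scalar coordinate, and the argument from the previous theorems transfers the inhomogeneity to $\fr_\mathcal{F}^{\vec e}(\vec a)$ verbatim.

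The step I expect to be the real obstacle is handling the scalar multiplication $\times_\mathbb{F}$ cleanly, since now $\fr_\mathcal{F}^{\vec e}(\vec a)$ is closed under arbitrary finite products of determinant values and of the fixed scalars sitting in the $\vec e(i)=0$ slots, so I must make sure that these products cannot accidentally land entirely inside $X$ or entirely inside its complement. This is where I would lean hardest on UBR and on choosing the $c_i$ to grow fast enough (e.g.\ $c_{i+1}>2(c_0+\cdots+c_i)$) that sums and products of the resulting determinants have transparent, mutually non-overlapping binary expansions; the bookkeeping is routine but needs care, and it is essentially the same computation one does to show $(\mathbb{F},+_\mathbb{F},\times_\mathbb{F})$ — equivalently an infinite integral domain of characteristic $0$ — is not a Ramsey algebra, i.e.\ Theorem~\ref{tehring}. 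In fact the slickest route may be to argue that the chosen $\vec\beta$ (the sequence of determinant values $2^{c_i}$, viewed in $\mathbb{F}$) already witnesses the failure of $(\mathbb{F},+_\mathbb{F},\times_\mathbb{F})$ to be a Ramsey algebra \emph{and} lies in $\im(|\ast|)$, and then invoke Theorem~\ref{3.5} together with a diagonal-matrix subalgebra argument to push this down to $\mathcal{A}$ and $\mathcal{A}'$ — modulo checking that the extra operation $+$ on matrices does not enlarge $\fr$ beyond what $+_\mathbb{F}$ already produces on determinant values, which is precisely the computation $|D_{i_1}+\cdots+D_{i_M}| = M^{n-1}(2^{c_{i_1}}+\cdots+2^{c_{i_M}})$ above. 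I would present the direct construction as the main proof and remark on the Theorem~\ref{3.5} shortcut afterwards.
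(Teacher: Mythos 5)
Your construction has the right flavour (diagonal matrices with power-of-two entries, an appeal to UBR), but the proposal has a genuine gap at its centre: the colouring $X$ is never actually pinned down, and the candidate you lean on, $X=\{2^k:k\in\omega\}$, does not work. The element of $\fr_\mathcal{F}^{\vec{e}}(\vec{a})\cap X$ you exhibit is the determinant of a \emph{single} $D_i$, but a reduction $\vec{a}$ of $\vec{b}$ need not retain any single $D_i$ as a matrix term: its matrix terms are sums $D_{i_1}+\cdots+D_{i_M}$ with $M$ arbitrary. Already for $\mathcal{A}=(\mathcal{M}_n(\mathbb{F}),\mathbb{F},+,\times_\mathbb{F},|\ast|)$, take a reduction $\vec{a}$ all of whose matrix terms are sums of at least two $D_i$'s and all of whose scalar terms are determinants of such sums; then every member of $\fr_\mathcal{F}^{\vec{e}}(\vec{a})$ is a finite $\times_\mathbb{F}$-product of numbers of the form $M^{n-1}\left(2^{c_{i_1}}+\cdots+2^{c_{i_M}}\right)$ with $M\ge 2$, each of which has odd part exceeding $1$, so $\fr_\mathcal{F}^{\vec{e}}(\vec{a})$ is disjoint from $\{2^k:k\in\omega\}$ and $\vec{a}$ is homogeneous. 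The whole burden of the proof therefore sits inside your unspecified ``appropriate multiples'', and with your non-scalar choice of $D_i$ the set you would need, $\left\{M^{n-1}\left(2^{c_{i_1}}+\cdots+2^{c_{i_M}}\right)\right\}$, is hard to separate from the set of products of two such numbers because the unbounded multipliers $M^{n-1}$ contaminate the binary expansions. The paper avoids all of this by taking $D_i=2^{2^i}I$ (a scalar matrix), so that $|D_{i_1}+\cdots+D_{i_M}|=\left(2^{2^{i_1}}+\cdots+2^{2^{i_M}}\right)^n$ is a clean $n$th power; it sets $X$ to be the $n$th powers of members of $Y=\left\{2^{2^{i_1}}+\cdots+2^{2^{i_M}}\right\}$, so that the determinant of \emph{every} matrix term of \emph{every} reduction lies in $X$, while the product of the determinants of two matrix terms equals $\left(\sum_{p,q}2^{2^{i_p}+2^{j_q}}\right)^n$ and lies outside $X$, since each exponent $2^{i_p}+2^{j_q}$ has two $1$'s in binary and hence the base is excluded from $Y$ by UBR.

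The proposed shortcut via Theorem~\ref{3.5} also fails. That theorem requires $h=|\ast|$ to be a homomorphism from $\mathcal{A}_1$ into $\mathcal{A}_0$; here $\mathcal{G}_1=\{+\}$ and the determinant is not additive (and for $\mathcal{A}'$ the signatures do not even match, one binary operation against two). Moreover, for $\mathcal{A}$ the scalar reduct $\mathcal{A}_0=(\mathbb{F},\times_\mathbb{F})$ is a semigroup and hence \emph{is} a Ramsey algebra, so there is no witnessing pair $\vec{\beta}$, $X$ for the failure of $\mathcal{A}_0$ to feed into Theorem~\ref{3.5} at all. The non-Ramseyness asserted by the theorem genuinely comes from the interaction of matrix addition with scalar multiplication through the determinant, not from the scalar phylum alone, so a direct construction along the paper's lines is unavoidable here.
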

\begin{proof}
Note that both the algebras contain matrix addition $+$ and field multiplication $\times_{\mathbb{F}}$. We will make use of these operations to show that neither algebra is an $\vec{e}$-Ramsey algebra. Now, let $\vec{e}$ be given and let $\vec{b}$ be an $\vec{e}$-sorted sequence defined by
\begin{displaymath}
   \vec{b}(i) = \left\{
     \begin{array}{lr}
       D_i & \text{if}\; \vec{e}(i)=1, \\
       1 & \text{otherwise},
     \end{array}
   \right.
\end{displaymath}
where $D_i$, the diagonal matrix all of whose diagonal elements are $2^{2^i}$.

Consider $\vec{a}$ an arbitrary $\vec{e}$-sorted reduction of $\vec{b}$. Then, every matrix term of $\vec{a}$ is of the form $D_{i_1}+\cdots+D_{i_M}$ for some $M\in\omega$ and some natural numbers $i_1<\cdots<i_M$. Therefore, taking the determinant of this sum, we obtain
\begin{equation}\label{odd}
\left|D_{i_1}+\cdots+D_{i_M}\right|=\left(2^{2^{i_1}}+\cdots+2^{2^{i_M}}\right)^n,
\end{equation}
which is a member of $\fr_\mathcal{F}^{\vec{e}}(\vec{a})$ by Eqv.\:\ref{Omegafr} since $\vec{e}\in\Omega$.

Choosing another matrix term $D_{j_1}+\cdots+D_{j_N}$ of $\vec{a}$ such that $i_1<\cdots<i_M<j_1<\cdots<j_N$, we have
\begin{equation}\label{even}
\left(2^{u_{i_1}}+\cdots+2^{u_{i_M}}\right)^n\left(2^{u_{j_1}}+\cdots+2^{u_{j_N}}\right)^n=\left(\sum_{(p, q)\in\{1, \ldots, M\}\times\{1, \ldots, N\}}2^{u_{i_p}+u_{j_q}}\right)^n
\end{equation}
which is also a member of $\fr_\mathcal{F}^{\vec{e}}(\vec{a})$ for the same reason above.

Note that as the $i$'s in Eq.\:\ref{odd} are \emph{strictly} increasing, $\{2^{i_p}:p=1, \ldots, M\}$ is a set of pairwise distinct numbers for each $M\in\omega$. Therefore, the equation explicitly expresses $\left|D_{i_1}+\cdots+D_{i_M}\right|$ as the $n$th power of a number $\kappa$ in its binary form. Define the set $Y$ by
\begin{equation*}
Y=\left\{\kappa\in\omega:\kappa=2^{2^{i_1}}+\cdots+2^{2^{i_M}}\;\text{for some}\;M\in\omega\;\text{and some natural numbers}\;i_1<\cdots<i_M\right\}
\end{equation*}
and define
\begin{equation}
X=\left\{\kappa^n:\kappa\in Y\right\}.
\end{equation}
Note that the quantity $\displaystyle\sum_{(p, q)\in\{1, \ldots, M\}\times\{1, \ldots, N\}}2^{u_{i_p}+u_{j_q}}$ appearing on the right hand side of Eq. \ref{even} is not a member of $Y$ due to UBR. As such, we see that the quantity given by Eq. \ref{even} is not a member of $X$, whereas the quantity given by Eq. \ref{odd} is clearly a member of $X$. This demonstrates the nonhomogeneity of $\vec{a}\leq_\mathcal{F}\vec{b}$ for $X$. Therefore, no reduction of $\vec{b}$ can be homogeneous for $X$, hence $\mathcal{A}$ and $\mathcal{A}'$ are not $\vec{e}$-Ramsey algebras if $\vec{e}\in\Omega_0$ and is nonconstant.
\end{proof}

\begin{thm}
The algebra $\mathcal{A}=\left(\mathcal{M}_n(\mathbb{F}), \mathbb{F}, \times_\mathbb{F}, |\ast|\right)$ is not an $\vec{e}$-Ramsey algebra for every nonconstant $\vec{e}\in\Omega_0$.
\end{thm}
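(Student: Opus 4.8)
The plan is to produce a single $\vec{e}$-sorted sequence $\vec{b}$ and a set $X\subseteq\mathbb{F}$ witnessing the failure of the $\vec{e}$-Ramsey property, following the pattern of Theorem \ref{firstD_i} but with field multiplication, rather than matrix addition, as the operation used to build new scalars. The first step is a structural remark about reductions in this algebra: since $\mathcal{G}_1=\varnothing$ and neither $\times_\mathbb{F}$ nor $|\ast|$ has codomain $A_1=\mathcal{M}_n(\mathbb{F})$, the only orderly term over $\mathcal{F}$ with codomain $A_1$ is $\id_{A_1}$. Consequently, in any $\vec{e}$-sorted reduction $\vec{a}\leq_\mathcal{F}\vec{b}$, each matrix term of $\vec{a}$ is verbatim a matrix term of $\vec{b}$. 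Because $\vec{e}\in\Omega_0$ is nonconstant it takes the value $1$ infinitely often, so $\vec{a}$ has at least two matrix terms; I will use only the first two.

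For the construction I would reuse the sequence of Theorem \ref{firstD_i}: let $\vec{b}$ be $\vec{e}$-sorted with $\vec{b}(i)=D_i$ when $\vec{e}(i)=1$ and $\vec{b}(i)=1$ otherwise, where $D_i$ is the diagonal matrix all of whose diagonal entries equal $2^{2^i}$, so that $|D_i|=2^{n2^i}$. Given an $\vec{e}$-sorted $\vec{a}\leq_\mathcal{F}\vec{b}$, the structural remark shows that its first two matrix terms are $D_{i_1}$ and $D_{i_2}$ for some $i_1<i_2$ with $\vec{e}(i_1)=\vec{e}(i_2)=1$ (the strict inequality following from condition (2) of Definition \ref{reduction}). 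Applying the orderly terms $|\ast|$ and $\times_\mathbb{F}(|\ast|(x_1),|\ast|(x_2))$ to the relevant length-$1$ and length-$2$ subsequences of $\vec{a}$, Eqv.~\ref{Omegafr} puts both $2^{n2^{i_1}}$ and $2^{n2^{i_1}}\times_\mathbb{F}2^{n2^{i_2}}=2^{n(2^{i_1}+2^{i_2})}$ into $\fr_\mathcal{F}^{\vec{e}}(\vec{a})$.

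Finally I would take $X=\{2^{n2^k}:k\in\omega\}\subseteq\mathbb{F}$. Since $i_1<i_2$, the natural number $2^{i_1}+2^{i_2}$ is not a power of $2$ by UBR, so $n(2^{i_1}+2^{i_2})\neq n2^k$ for every $k$; as $\mathbb{F}$ has characteristic $0$ and $t\mapsto 2^t$ is injective on $\omega$, this gives $2^{n(2^{i_1}+2^{i_2})}\notin X$ while $2^{n2^{i_1}}\in X$. Hence $\fr_\mathcal{F}^{\vec{e}}(\vec{a})$ meets both $X$ and $\mathbb{F}\setminus X$, so no $\vec{e}$-sorted reduction of $\vec{b}$ is homogeneous for $X$, and $\mathcal{A}$ is not an $\vec{e}$-Ramsey algebra.

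I expect the only real point requiring care to be the structural remark of the first paragraph — that with no operations into the matrix phylum the matrix terms of a reduction cannot be combined, hence are terms of $\vec{b}$, and that nonconstancy of $\vec{e}$ supplies two of them. After that the argument is the now-familiar UBR bookkeeping, with the products $|D_{i_1}|\times_\mathbb{F}|D_{i_2}|$ playing the role of the sums $|D_{i_1}+\cdots+D_{i_M}|$ in Theorem \ref{firstD_i}.
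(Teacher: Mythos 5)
Your proposal is correct and follows essentially the same route as the paper: the same sequence $\vec{b}$ built from the diagonal matrices $D_i$ with entries $2^{2^i}$, the same key observation that $\mathcal{G}_1=\varnothing$ forces every matrix term of a reduction to be a single $D_i$, and the same UBR comparison of a single determinant $2^{n2^{i_1}}$ against a product $2^{n(2^{i_1}+2^{i_2})}$. The only (immaterial) difference is that you take $X=\{2^{n2^k}:k\in\omega\}$ directly rather than reusing the larger set $X$ from Theorem~\ref{firstD_i} specialized to $M=1$, and you spell out the structural remark that the paper leaves implicit.
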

\begin{proof}
Let $\vec{b}$, $X\subseteq\mathbb{F}$, and $D_i, i\in\omega$ be as defined in the proof of the preceding theorem.  With the matrix part being an empty algebra, i.e. $\mathcal{G}_1=\varnothing$, every matrix term of an $\vec{e}$-sorted $\vec{a}\leq_\mathcal{F}\vec{b}$ is just a $D_i$ for some $i>0$. In the present case then, we have $N=N'=1$ and the reasoning in the proof of the preceding theorem applies.
\end{proof}

The next theorem requires a lemma.

\begin{lem}\label{pythagorean}
Let $0<i_1<\cdots<i_L$ and $0<j_1<\cdots<j_M<k_1<\cdots<k_N$ be integers. Then,
\begin{equation}
\left(2^{2^{i_1}}+\cdots+2^{2^{i_L}}\right)^2\neq\left(2^{2^{j_1}}+\cdots+2^{2^{j_M}}\right)^2+\left(2^{2^{k_1}}+\cdots+2^{2^{k_N}}\right)^2. \label{ineq1}
\end{equation}
\end{lem}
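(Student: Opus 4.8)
The plan is to prove the inequality by examining binary representations modulo suitable powers of $2$, exploiting the uniqueness of binary representation (UBR) together with the very sparse, strictly increasing exponents $2^{i}$.

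First I would expand both squares. Writing $A=2^{2^{i_1}}+\cdots+2^{2^{i_L}}$, $B=2^{2^{j_1}}+\cdots+2^{2^{j_M}}$, and $C=2^{2^{k_1}}+\cdots+2^{2^{k_N}}$, we have $A^2=\sum_{p,p'}2^{2^{i_p}+2^{i_{p'}}}$ and similarly for $B^2$ and $C^2$. The crucial observation is that the \emph{largest} exponent appearing in $A^2$ is $2^{i_L}+2^{i_L}=2^{i_L+1}$, coming uniquely from the diagonal term $p=p'=L$; likewise $B^2$ has top exponent $2^{j_M+1}$ and $C^2$ has top exponent $2^{k_N+1}$. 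Since $k_N$ is strictly the largest of all the indices on the right-hand side (as $j_M<k_1<\cdots<k_N$), the quantity $B^2+C^2$ has a well-defined leading binary behavior governed by $2^{k_N+1}$, with the caveat that one must check no carrying collapses this: the exponent $2^{k_N+1}$ occurs exactly once in $C^2$ and cannot be produced by any cross term $2^{k_p}+2^{k_q}$ with $(p,q)\neq(N,N)$ since $2^{k_p}+2^{k_q}<2^{k_N+1}$, nor by any term of $B^2$ since those are all at most $2^{j_M+1}\le 2^{k_1}<2^{k_N+1}$.

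The key step is then a comparison of the two sides at their respective top bits. If equation \eqref{ineq1} held, then $A^2=B^2+C^2$ would force the top exponent of $A^2$, namely $2^{i_L+1}$, to equal the top exponent of $B^2+C^2$, namely $2^{k_N+1}$, hence $i_L=k_N$. Having pinned $i_L=k_N$, I would subtract the common leading term $2^{2^{i_L+1}}=2^{2^{k_N+1}}$ from both sides and iterate: the next-highest exponent on the left is $2^{i_L}+2^{i_{L-1}}$ (or $2^{i_{L-1}+1}$ if $L\ge 2$, whichever is larger — in fact $2^{i_L}+2^{i_{L-1}}>2^{i_{L-1}+1}$ since $i_L>i_{L-1}$), while on the right it is $2^{k_N}+2^{k_{N-1}}$ coming from the cross term in $C^2$. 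Matching these via UBR forces $i_{L-1}=k_{N-1}$, and continuing this descent we are forced to conclude $L=N$ and $i_p=k_p$ for all $p$, so that $A=C$ and hence $B^2=A^2-C^2=0$, contradicting $B\ge 2^{2^{j_1}}>0$.

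The main obstacle is the careful bookkeeping of which binary digit each exponent $2^{i_p}+2^{i_{p'}}$ contributes to and verifying that no two distinct pairs $(p,p')$ and $(q,q')$ (up to order) ever yield the same exponent, so that UBR applies cleanly — this rests on the fact that $\{2^{i_1},\ldots,2^{i_L}\}$ is \emph{super-increasing}, i.e. each element exceeds the sum of all smaller ones, which makes all pairwise sums $2^{i_p}+2^{i_{p'}}$ distinct. Once that sum-distinctness is in hand the rest is a finite descent argument. An alternative, possibly cleaner route would be to reduce the whole equation modulo $2^{2^{k_N}+1}$: the right-hand side then loses only its single top term and retains a controlled residue, and comparing with the left-hand side's residue yields $i_L=k_N$ immediately, after which one divides out and repeats; I would try this modular phrasing first, falling back on the explicit digit analysis if the carrying estimates get delicate.
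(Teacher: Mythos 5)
Your proposal is correct in substance and shares the paper's central mechanism: expand both squares, observe that after grouping the symmetric cross terms the surviving exponents $2^{i_p+1}$ and $2^{i_p}+2^{i_q}+1$ are pairwise distinct (the hypothesis $i_1>0$ is what makes the diagonal exponents even and the doubled cross exponents odd, so the two families cannot collide), and conclude that each side's expansion is literally its binary representation, to which UBR applies. Where you genuinely diverge is in how the contradiction is then extracted. The paper compares the two representations globally: the terms whose exponents have a single binary digit must match as a set, forcing $L=M+N$, while counting the total number of binary digits forces (in effect) $\binom{L+1}{2}=\binom{M+1}{2}+\binom{N+1}{2}$, and these two facts are incompatible since $M,N>0$. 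You instead run a descent on the sorted exponent lists, matching leading terms to get $i_L=k_N$, then $i_{L-1}=k_{N-1}$, and so on, ultimately forcing $A=C$ and hence $B=0$. Your descent does go through, but it costs more bookkeeping than you acknowledge: you must separately handle $L=1$ or $N=1$ (where the ``second-highest cross term'' does not exist and one falls back on parity of exponents or on digit counts), and you must verify that every exponent arising from $B^2$ lies strictly below every exponent arising from $C^2$, so that the top of the right-hand list is governed by $C^2$ alone. The paper's digit-counting finish avoids this case analysis entirely; your descent buys a sharper structural conclusion ($A=C$) that the lemma does not actually need. Your proposed ``modular'' variant is the leading-term comparison in disguise and inherits the same bookkeeping, so I would not expect it to be cleaner.
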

\begin{proof}
Expanding the left hand side of Inequality \ref{ineq1}, we obtain
\begin{eqnarray}
  \sum_{(p, q)\in\{1, \ldots, L\}^2}2^{2^{i_p}+2^{i_q}} &=& \sum_{p\in\{1, \ldots, L\}}2^{2(2^{i_p})}+\sum_{p, q\in\{1, \ldots, L\}, p\neq q}2\cdot2^{2^{i_p}+2^{i_q}} \nonumber \\
  &=& \sum_{p\in\{1, \ldots, L\}}2^{2^{i_p+1}}+\sum_{p, q\in\{1, \ldots, L\}, p\neq q}2^{2^{i_p}+2^{i_q}+1}. \label{longsum1}
\end{eqnarray}
Call this quantity, a positive integer, $N_1$. Similar expansion of the right hand side of Inequality \ref{ineq1} gives us
\begin{equation*}
\left[\sum_{r\in\{1, \ldots, M\}}2^{2^{j_r+1}}+\sum_{r, s\in\{1, \ldots, M\}, r\neq s}2^{2^{j_r}+2^{j_s}+1}\right]+\left[\sum_{u\in\{1, \ldots, N\}}2^{2^{k_u+1}}+\sum_{u, v\in\{1, \ldots, N\}, u\neq v}2^{2^{k_u}+2^{k_v}+1}\right]
\end{equation*}
\begin{eqnarray}
    =\sum_{r\in\{1, \ldots, M\}}2^{2^{j_r+1}} &+& \sum_{u\in\{1, \ldots, N\}}2^{2^{k_u+1}} \nonumber \\
    &+& \sum_{r, s\in\{1, \ldots, M\}, r\neq s}2^{2^{j_r}+2^{j_s}+1}+\sum_{u, v\in\{1, \ldots, N\}, u\neq v}2^{2^{k_u}+2^{k_v}+1}. \label{longsum2}
\end{eqnarray}
Call this integer $N_2$.

Now, it is crucial we note that the exponents in the sum of Eq. \ref{longsum1} are pairwise distinct, hence Eq. \ref{longsum1} is the binary expansion of $N_1$. Similarly, Eq. \ref{longsum2} is the binary expansion of $N_2$. Thus, if $N_1$ is to equal $N_2$, they must have the exact same binary expansion. In the case $L^2\neq M^2+N^2$, clearly $N_1\neq N_2$ because the total number of summands in Eq. \ref{longsum1} is $L^2$ while the total number of summands in Eq. \ref{longsum2} is $M^2+N^2$. Thus, in such a case, we have that $N_1\neq N_2$ as desired.

On the other hand, suppose that $L^2=M^2+N^2$. We compare the sums $\sum_{p\in\{1, \ldots, L\}}2^{2^{i_p+1}}$ and $\sum_{r\in\{1, \ldots, M\}}2^{2^{j_r+1}}+\sum_{u\in\{1, \ldots, N\}}2^{2^{k_u+1}}$. This is because the exponent for each term of either sums has as binary representation \emph{exactly one} $1$'s appearing while other terms have three, hence, for $N_1$ to equal $N_2$, it is required by UBR that both sums are a fortiori equal. However, such a requirement leads to $L=M+N$, which runs into contradiction with $L^2=M^2+N^2$ since neither $M$ or $N$ is $0$. This shows conclusively that Inequality \ref{ineq1} always holds.
\end{proof}

\begin{thm}
\begin{enumerate}
\item If $n=1$, then the algebra $\left(\mathcal{M}_n(\mathbb{F}), \mathbb{F}, +, +_\mathbb{F}, |\ast|\right)$ is an $\vec{e}$-Ramsey algebra for every $\vec{e}\in\Omega_0$.
\item If $n>1$, then the algebra $\left(\mathcal{M}_n(\mathbb{F}), \mathbb{F}, +, +_\mathbb{F}, |\ast|\right)$ is not an $\vec{e}$-Ramsey algebra for every nonconstant $\vec{e}\in\Omega_0$.
\end{enumerate}
\end{thm}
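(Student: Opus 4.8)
When $n=1$, $\mathcal{M}_1(\mathbb{F})$ is (a copy of) $\mathbb{F}$, matrix addition is field addition, and $|\ast|$ is just the identification $[r]\mapsto r$; thus $\mathcal{A}_1=(\mathcal{M}_1(\mathbb{F}),+)$ is a semigroup and $h=|\ast|$ is a homomorphism from $\mathcal{A}_1$ into $\mathcal{A}_0=(\mathbb{F},+_\mathbb{F})$, both of which carry one binary operation. Since semigroups are Ramsey algebras (Theorem~\ref{hindman}), Theorem~\ref{heterohomomorphism} applies verbatim and yields that $\mathcal{A}$ is an $\vec{e}$-Ramsey algebra for every $\vec{e}\in\Omega_0$.

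\textbf{Part (2).} Fix a nonconstant $\vec{e}\in\Omega_0$. The plan is to reuse the witnessing data from the proof of Theorem~\ref{firstD_i}: let $\vec{b}$ be $\vec{e}$-sorted with $\vec{b}(i)=D_i$ (the $n\times n$ diagonal matrix all of whose diagonal entries are $2^{2^i}$) when $\vec{e}(i)=1$ and $\vec{b}(i)=1$ otherwise; let $Y$ be the set of all finite sums of pairwise distinct numbers of the form $2^{2^i}$; and let $X=\{\kappa^n:\kappa\in Y\}$. The first step is to read off the shape of an arbitrary $\vec{e}$-sorted $\vec{a}\leq_\mathcal{F}\vec{b}$. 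Since $+$ (matrix addition) and $\id_{A_1}$ are the only orderly-term building blocks with codomain $A_1$, every orderly term with codomain $A_1$ is just the sum of all of its inputs; hence every matrix entry of $\vec{a}$ equals $D_{i_1}+\cdots+D_{i_M}=\kappa I$ with $\kappa=2^{2^{i_1}}+\cdots+2^{2^{i_M}}\in Y$, where all $i_r\geq1$ (because $\vec{e}(0)=0$) and where the index sets attached to distinct matrix entries of $\vec{a}$ are separated, earlier entries carrying strictly smaller indices, by the concatenation clause of Definition~\ref{reduction}.

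The second step is to exhibit, for any such $\vec{a}$, two members of $\fr_\mathcal{F}^{\vec{e}}(\vec{a})$ witnessing non-homogeneity for $X$. Since $\vec{e}$ is nonconstant and lies in $\Omega$, $\vec{a}$ has infinitely many matrix entries; choose two, $\vec{a}(j_1)=\kappa_1 I$ and $\vec{a}(j_2)=\kappa_2 I$ with $j_1<j_2$, so that $\kappa_1,\kappa_2\in Y$, all their indices are $\geq1$, and every index of $\kappa_1$ is strictly below every index of $\kappa_2$. Feeding $\vec{a}(j_1)$ to the orderly term $|\ast|$ gives $\kappa_1^n\in\fr_\mathcal{F}^{\vec{e}}(\vec{a})\cap X$, while feeding $\langle\vec{a}(j_1),\vec{a}(j_2)\rangle$ to the orderly term $|\ast|(x_1)+_\mathbb{F}|\ast|(x_2)$ gives $\kappa_1^n+\kappa_2^n\in\fr_\mathcal{F}^{\vec{e}}(\vec{a})$, both memberships following from Eqv.~\ref{Omegafr}. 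The crux is then to verify $\kappa_1^n+\kappa_2^n\notin X$. For $n=2$: if $\kappa_1^2+\kappa_2^2=\kappa^2$ with $\kappa=2^{2^{c_1}}+\cdots+2^{2^{c_K}}\in Y$, then each $\kappa_t^2$ is divisible by $16$ (its smallest binary term is $2^{2^{i+1}}$ with $i\geq1$, so the exponent is $\geq4$), hence so is $\kappa^2$, which forces $c_1\geq1$ (if $c_1=0$ then $\kappa\equiv2\pmod4$ and $\kappa^2\equiv4\pmod8$); now the three index sets satisfy the positivity and ordering hypotheses of Lemma~\ref{pythagorean}, a contradiction. For $n\geq3$, $\kappa_1^n+\kappa_2^n=\kappa^n$ is impossible in positive integers by Fermat's Last Theorem (alternatively, one extends the binary-expansion bookkeeping of Lemma~\ref{pythagorean} to $n$-th powers). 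In either case no $\vec{e}$-sorted $\vec{a}\leq_\mathcal{F}\vec{b}$ is homogeneous for $X$, so $\mathcal{A}$ is not an $\vec{e}$-Ramsey algebra. (For constant $\vec{e}\in\Omega_0$ only $+_\mathbb{F}$ is available, so the question reduces to whether $(\mathbb{F},+_\mathbb{F})$ is a Ramsey algebra, which it is; this is why the restriction to nonconstant $\vec{e}$ is genuine.)

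The main obstacle I expect is the arithmetic of the crux step for $n>2$: Fermat's Last Theorem disposes of it in one line, but a self-contained treatment would require pushing the binary-expansion analysis of Lemma~\ref{pythagorean} past squares, where the multinomial coefficients in $(\sum 2^{2^{i}})^n$ no longer read off as binary digits. A lesser point needing care is the structural claim in the first step — that the reduction relation forces each matrix entry of $\vec{a}$ into the form $\kappa I$ with $\kappa\in Y$ together with the index-separation property — since the coloring $X$ is designed precisely so that determinants of single matrix entries land in $X$ while sums of two such determinants are forced out of it.
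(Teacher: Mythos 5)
Your proof is correct and, for part (2), follows the paper's argument essentially verbatim: the same $\vec{e}$-sorted sequence $\vec{b}$ built from the matrices $D_i$, the same coloring $X=\{\kappa^n:\kappa\in Y\}$, the same two elements $\kappa_1^n$ and $\kappa_1^n+\kappa_2^n$ of $\fr_\mathcal{F}^{\vec{e}}(\vec{a})$, and the same dichotomy (Fermat's Last Theorem for $n>2$, Lemma \ref{pythagorean} for $n=2$). Two differences are worth recording. First, for part (1) the paper argues directly, identifying $1\times 1$ matrices with their entries and transporting a reduction homogeneous for $[X]$ across that identification; you instead observe that $|\ast|$ is a homomorphism from $(\mathcal{M}_1(\mathbb{F}),+)$ onto $(\mathbb{F},+_\mathbb{F})$ and invoke Theorem \ref{heterohomomorphism} together with Hindman's theorem. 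Both are valid, and yours is the cleaner route: the paper's hand computation is really just the $n=1$ instance of Lemma \ref{ac}, so quoting Theorem \ref{heterohomomorphism} makes the mechanism explicit and avoids redoing it. Second, in part (2) you take care to verify the positivity hypotheses $0<i_1$ and $0<j_1$ of Lemma \ref{pythagorean}: the indices occurring in matrix terms of $\vec{a}$ are all at least $1$ because $\vec{e}(0)=0$, and your mod-$16$ computation rules out $c_1=0$ for the hypothetical $\kappa$ with $\kappa^2=\kappa_1^2+\kappa_2^2$. The paper applies the lemma without comment on this point, so your version closes a small gap that the published proof leaves implicit.
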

\begin{proof}
\begin{enumerate}
\item In this case $n=1$, matrices and scalars are essentially the same. If $M$ is a $1\times 1$ matrix, let us denote the entry by $M^\#$; if $X$ is a set of scalars, let us denote the set of corresponding matrices whose entries are in $X$ by $[X]$. We see that $M^\#\in X$ if and only if $M\in [X]$. Note that $|M|=M^\#$ for every $1\times 1$ matrix.

Now, given an $X\subseteq\mathbb{F}$ and an $\vec{e}$-sorted $\vec{b}$, let $\vec{\beta}$ be the subsequence of $\vec{b}$ all of whose terms are matrices and let $\vec{\alpha}\leq_{\{+\}}\vec{\beta}$ be homogeneous for $[X]$. The sequence
\begin{displaymath}
   \vec{a}(i) = \left\{
     \begin{array}{lr}
       \vec{\alpha}^\# & \text{if}\; \vec{e}(i)=0, \\
       \vec{\alpha} & \text{otherwise}
     \end{array}
   \right.
\end{displaymath}
is thus a reduction of $\vec{b}$, i.e. $\vec{a}\leq_\mathcal{F}\vec{b}$. In addition, the homogeneity of $\vec{\alpha}$ for $[X]$ ensures that $\vec{a}$ is homogeneous for $X$.
\item For each $i\in\omega$, let $D_i$ mean the same thing as in the proof of Theorem \ref{firstD_i} and, given any $\vec{e}\in\Omega_0$, let
\begin{displaymath}
   \vec{b}(i) = \left\{
     \begin{array}{lr}
       D_i & \text{if}\; \vec{e}(i)=1, \\
       1 & \text{otherwise}.
     \end{array}
   \right.
\end{displaymath}
Note that, if $\vec{a}\leq_\mathcal{F}\vec{b}$ is $\vec{e}$-sorted, then each matrix term of $\vec{a}$ is of the form $D_{i_1}+\cdots+D_{i_M}$ for some $i_1<\cdots<i_M$ and the determinant of such a term is given by $\left(2^{2^{i_1}}+\cdots+2^{2^{i_M}}\right)^n$, which is a member of $\fr_\mathcal{F}^{\vec{e}}(\vec{a})$ as we appeal to Eq.\:\ref{Omegafr}.

Therefore, define $X\subseteq\mathbb{F}$ by $X=\{r\in\omega:\Theta(r)\}$, where $\Theta(r)$ is the statement $r=\left(2^{2^{i_1}}+\cdots+2^{2^{i_M}}\right)^n$ for some $i_1<\cdots<i_M$. Thus, note that, for each $\vec{a}\leq_\mathcal{F}\vec{b}$ that is $\vec{e}$-sorted, the intersection $X\cap\fr_\mathcal{F}^{\vec{e}}(\vec{a})$ is \emph{nonempty}.

Now, let $k_1<k_2$ be such that $\vec{a}(k_1)=D_{i_1}+\cdots+D_{i_M}$ and $\vec{a}(k_2)=D_{j_1}+\cdots+D_{j_N}$. It then follows that the quantity $\left(2^{2^{i_1}}+\cdots+2^{2^{i_M}}\right)^n+\left(2^{2^{j_1}}+\cdots+2^{2^{j_{N}}}\right)^n\in\fr_\mathcal{F}^{\vec{e}}(\vec{a})$ (again by Eq.\:\ref{Omegafr}) is \emph{not} a member of $X$. This is because the quantity cannot be expressed in the form $\left(2^{2^{l_1}}+\cdots+2^{2^{l_P}}\right)^n$ for any natural numbers $l_1<\cdots<l_P$ owing to Fermat's Last Theorem for $n>2$, while for $n=2$, the result is the content of Lemma \ref{pythagorean}.

We have, thus, shown that the algebra is not an $\vec{e}$-Ramsey algebra for any $n>1$ and any nonconstant $\vec{e}\in\Omega_0$.
\end{enumerate}
\end{proof}

\section{Conclusion}
This paper was aimed at further understanding heterogeneous Ramsey algebras. Specifically, we have looked at heterogeneous algebras consisting of two phyla with some ``disjoint'' set of operations and some heterogeneous unary operations mapping members of a phylum to another. Special cases are when the heterogeneous unary operations are homomorphisms. Such algebras are motivated from the various matrix algebras that we have studied and, as corollaries, we derived results pertaining to the matrix algebras of concern.

While the paper has shed more light on the behavior of heterogeneous Ramsey algebras, algebras for which heterogeneous operations are present remain elusive. The only heterogeneous algebras admitting such operations that have been studied are vector spaces and the results can be found in \cite{teohteh}. A combined look at these two works should be a good starting point for further investigation.

\end{document}